\newtheorem{thm}{Theorem}
\newtheorem{prop}[thm]{Proposition}
\newtheorem{rem}[thm]{Remark}
\newtheorem{lem}[thm]{Lemma}
\newtheorem{cor}[thm]{Corollary}
\newtheorem{assn}[thm]{Assumption}
\crefname{thm}{theorem}{theorems}
\crefname{defn}{definition}{definitions}
\crefname{rem}{remark}{remarks}
\crefname{prop}{proposition}{propositions}
\crefname{lem}{lemma}{lemmas}
\crefname{section}{section}{sections}
\newcommand{\R}{\mathbb{R}}
\newcommand{\Z}{\mathbb{Z}}
\newcommand{\C}{\mathbb{C}}
\newcommand{\mres}{\mathbin{\vrule height 1.6ex depth 0pt width
0.13ex\vrule height 0.13ex depth 0pt width 1.3ex}}
\newcommand{\dual}{*}
\DeclareMathOperator{\TV}{TV}
\DeclareMathOperator{\BV}{BV}
\newcommand{\D}{\mathrm{D}}
\newcommand{\TVp}{\TV}
\newcommand{\TVd}[1]{\TV_{#1}}
\newcommand{\Dvalid}{D_{\text{valid}}}
\newcommand{\Dinvalid}{D_{\text{invalid}}}
\newcommand{\Dconv}{D_{\text{conv}}}
\newcommand{\BregmanDistance}[2]{\mathcal{D}^{#1}_{#2}}
\newcommand{\prodm}[1]{ {\otimes}_{#1}}
\newcommand{\av}{{\mathrm{av}}}
\newcommand{\ver}{{\mathrm v}}
\newcommand{\hor}{{\mathrm h}}
\newcommand{\Lv}{L^\ver}
\newcommand{\Lh}{L^\hor}
\newcommand{\Lvm}{L^{\ver,m}}
\newcommand{\Lhn}{L^{\hor,n}}
\newcommand{\fcut}{\Phi}
\newcommand{\Dir}{\mathfrak D}
\newcommand{\Fej}{\mathfrak F_{\fcut}}
\newcommand{\cha}{\chi^{\fcut}}
\newcommand{\hd}{\mathcal H} 
\newcommand{\wrt}{\:\mathrm{d}}
\DeclareMathOperator{\sgn}{sgn}
\newcommand{\e}{{\mathrm e}}
\renewcommand{\d}{{\mathrm d}}
\newcommand{\sinc}{\mathrm{sinc}}
\newcommand{\Si}{\mathrm{Si}}
\newcommand{\Ci}{\mathrm{Ci}}
\newcommand{\Cin}{\mathrm{Cin}}
\newcommand{\Sc}{ \mathcal{S} }
\newcommand{\notinclude}[1]{}
\newcommand{\todo}[1]{{\color{red}[TODO: {#1}]}}
\DeclareMathOperator{\supp}{supp}
\DeclareMathOperator{\range}{Rg}
\DeclareMathOperator{\dive}{div}
\DeclareMathOperator{\subdif}{\partial}
\DeclareMathOperator{\dist}{dist}
\newcommand{\st}{ \, \left|\right.\, }
\DeclarePairedDelimiterX\set[1]\lbrace\rbrace{#1}
\DeclarePairedDelimiterX{\abs}[1]{|}{|}{#1}
\DeclareMathOperator{\length}{length}
\newlist{assumptions}{enumerate}{1}
\setlist[assumptions]{label=\arabic*)}
\crefname{assumptionsi}{assumption}{assumptions}
\title{Exact reconstruction and reconstruction from noisy data with anisotropic total variation}
\author{M. Holler \and  B. Wirth}
\date{}
\begin{document}
\maketitle
\begin{abstract}
It is well-known that point sources with sufficient mutual distance can be reconstructed exactly from finitely many Fourier measurements by solving a convex optimization problem with Tikhonov-regularization (this property is sometimes termed superresolution). In case of noisy measurements one can bound the reconstruction error in unbalanced Wasserstein distances or weak Sobolev-type norms.
A natural question is to what other settings the phenomenon of superresolution extends. We here keep the same measurement operator, but replace the regularizer to anisotropic total variation, which is particularly suitable for regularizing piecewise constant images with horizontal and vertical edges. Under sufficient mutual distance between the horizontal and vertical edges we prove exact reconstruction results and $L^1$ error bounds in terms of the measurement noise.
\end{abstract}

\textbf{Keywords: } Total Variation, Exact Recovery, Convergence rates, Source Conditions, Compressed Sensing, Tikhonov Regularization
\tableofcontents

\section{Introduction}

\notinclude{
\todo{notation: need
$L,\Lv,\Lh,\Lvm,\Lh_k,L_R=B_R(L),\Lv_R,\Lh_R,\Lv_\pm,\Lh_\pm,(\Lvm)_\pm,(\Lhn)_\pm,P\Lh_\pm,P\Lv_\pm$ -- certainly requires sketch;%

index bounds $m,n,N$ for $x,y,t$-direction; instead now use indices $m,n,i$ with upper bounds $M,N,I$

$\Delta,\Delta_k,\Delta_l$ (latter two can actually not be distinguished; probably we don't need them)

$\TV,\TV^\ver,\TV^\hor,\TVd{S}^v,\TVd{S}^h,\TV^\ver|_V,\TV^\hor|_V,\TVd{S}^v|_V,\TVd{S}^h|_V$ (use bottom alternative)

$u^\dagger$, $u^\delta$, $f^\dagger$, $f^\delta$

$\D_x,\D_y,\D$

$C_1$ for constants

perhaps $\lesssim$, $\gtrsim$?

Fej\'er kernel $\Fej$ and cutoff frequency $\fcut$ and Dirichlet kernel $\Dir$ and approximate characteristic function $\cha$ (notation clash with forward operator $K$ and data $f$ and derivative $\D$?)

forward operator $K$ and its adjoint $K^\dual $ (which actually should be a preadjoint -- need to change notation?); use $\dual $ for predual objects

we use $A_R$ as well as $B_R(A)$ for the $R$-neighbourhood of a set $A$ -- need to unify notation; $A^c$ for compliment

constants $\kappa,\eta,R$ for the dual certificate conditions ($R$ is sometimes also remainder)

possible alternatives:
$\TV^\ver(u;V)$ instead of $\TV^\ver|_V(u)$; also all $\TV$ should be anisotropic (no normal $\TV$ should occur)

norms $|\cdot|_i$ for $\ell^i$ norm, $\|\cdot\|_i$ for $L^i$ norm

if we use periodic boundary conditions we will have to change all open intervals to half-closed or the like;
if one value of the forward operator provides the average function value, then periodic boundary conditions would be nicest, since they would include Dirichlet conditions as a special case.
If the forward operator does not provide any information on the function value average, then periodic boundary conditions with zero mean constraint and Dirichlet conditions would be two different settings.
}
}%
Consider the inverse problem to reconstruct a function or a measure $u^\dagger$ on the flat torus $\Omega = (\R/\Z)^2$
from a measurement $f^\dagger=Ku^\dagger$ consisting of finitely many Fourier coefficients,
\begin{equation}\label{eqn:truncatedFourier}
Ku
=(\hat u_k)_{k\in\Z^2,|k|_\infty\leq\fcut}
=\left(\int_\Omega e^{-2\pi i(x,y)\cdot(k_1,k_2)}\wrt u(x,y)\right)_{k_1,k_2\in\Z,\,|k_1|,|k_2|\leq\fcut}.
\end{equation}
If $u^\dagger=\sum_{i=1}^Ia_i\delta_{(x_i,y_i)}$ is a linear combination of Dirac masses at locations $(x_i,y_i)\in\Omega$
and if those locations have sufficient mutual distance in terms of the cutoff frequency $\fcut\geq1$,
then $u^\dagger$ can be uniquely reconstructed as the minimizer of
\begin{equation*}
J_0^{f^\dagger}(u)=|u|(\Omega)+\iota_{\{f^\dagger\}}(Ku),
\end{equation*}
where $\iota_A$ denotes the convex indicator function of the set $A$ (see for instance \cite{Castro2012,Candes2014,AlbertiAmmariRomeroWintz2019,HollerSchlueterWirth2022}).
In case of noisy measurements $f^\delta$ with noise strength $\frac12|f^\delta-f^\dagger|_2^2\leq\delta$,
the ground truth $u^\dagger$ can be approximated by the minimizer $u^\delta$ of
\begin{equation*}
J_\alpha^{f^\delta}(u)=|u|(\Omega)+\tfrac1{2\alpha}|Ku-f^\delta|_2^2,
\end{equation*}
where for the choice $\alpha=\sqrt\delta$ one obtains a reconstruction error of order $\sqrt\delta$ in the unbalanced Wasserstein-2 distance or in RKHS norms
(see for instance \cite{Candes2013,Duval2015,HollerSchlueterWirth2022}).
A natural question is to what other inverse problems settings such results can be extended.
One direction concerns the measurement operator $K$, for which one might for instance consider discretized Laplace or X-ray transforms or the like (see for instance \cite{DenoyelleDuvalPeyreSoubies2020}).
In this article we deal with another direction, namely changing the regularization.
More specifically, we replace the total mass $|u|(\Omega)$ in the above minimization problems by the anisotropic total variation
\begin{equation*}
\TVp(u) = \sup \left \{ \int_\Omega u \dive \varphi\wrt(x,y) \,\middle|\, \varphi \in C_c^\infty(\Omega,\R^2),\, \sup_{(x,y)\in\Omega}|\varphi(x,y)|_\infty\leq 1 \right\}
\end{equation*}
(with periodic boundary conditions; homogeneous Dirichlet conditions could be used as well).
This anisotropic total variation penalizes the (distributional) gradient of an image $u:\Omega\to\R$
and is known to lead to sparse gradients or equivalently sharp image edges with a preference for horizontal and vertical edges
(the cost provides no incentive for cutting off corners by diagonal edges).
Since images with just horizontal and vertical edges can be parameterized by finitely many values
one can expect a similar behaviour as for the reconstruction of point masses, which is what we show.

\subsection{Setting and main results}
In detail, we consider piecewise constant real-valued ground truth images
\begin{equation*}
u^\dagger = \sum_{m=1}^M \sum_{n=1}^N u^\dagger_{mn} \chi_{[x_m,x_{m+1}[ \times [y_n,y_{n+1}[}\in\BV(\Omega)
\end{equation*}
in the space of functions of bounded variation (with periodic boundary conditions),
where $\chi_A$ denotes the characteristic function of a set $A$ and $x_1<\ldots<x_M$ as well as $y_1<\ldots<y_N$ are points in $[0,1[$. %
Here and throughout we identify $x_{M+1}=x_1$ as well as $y_{N+1}=y_1$ and interpret an interval $[a,b[$ with $b<a$ as $[a,1+b[\subset\R$ projected onto $\R/\Z$,
the latter being identified with $[0,1[$ in the canonical way.
Denoting by $\dist$ the metric on $\R/\Z$,
the minimum distance of the points $x_1, \ldots, x_M, y_1,\ldots,y_N$ regarded as points in $\R/\Z$ will be denoted
\begin{equation*}
\Delta=\min\{\min\{\dist(x_m,x_{m+1})\,|\,m=1,\ldots,M\},\min\{\dist(y_n,y_{n+1})\,|\,n=1,\ldots,N\}\}.
\end{equation*}
We further assume the following.

\begin{assn}[Consistent gradient direction]\label{ass:consistentGradient}
On a vertical line, the (distributional) $x$-derivative of $u^\dagger$ does not change sign,
and likewise, on a horizontal line its $y$-derivative does not change sign.
In other words, for each $m$ the restriction $\D_xu^\dagger\mres(\{x_m\}\times\R/\Z)$ of the measure $\D_xu^\dagger$ to the vertical line $\{x_m\}\times\R/\Z$
is either a nonnegative or a nonpositive measure, as is $\D_yu^\dagger\mres(\R/\Z\times\{y_n\})$ for each $n$.
\end{assn}

Then one indeed obtains exact reconstruction from noise-free measurements $f^\dagger=Ku^\dagger$.

\begin{thm}[Exact reconstruction] \label{thm:exact_recon_intro}
Let \cref{ass:consistentGradient} hold and $K$ be the operator \eqref{eqn:truncatedFourier}.
There exists a constant $C>0$ such that, if $\Delta>\frac C{\fcut}$, then $u^\dagger$ is the unique minimizer of
\begin{equation*}
J_0^{f^\dagger}(u)=\TVp(u)+\iota_{\{f^\dagger\}}(Ku).
\end{equation*}
\end{thm}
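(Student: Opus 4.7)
The plan is to apply the standard non-degenerate dual-certificate strategy for sparse recovery, adapted from the Dirac setting to anisotropic $\TVp$. I would construct $\eta = K^\dual w$ for some $w$ with $\eta \in \subdif\TVp(u^\dagger)$, which is equivalent to $\eta = -\dive\varphi$ for some $\varphi=(\varphi_1,\varphi_2)\in L^\infty(\Omega;\R^2)$ with $|\varphi_i|\leq 1$ and $\varphi_i$ matching the sign of $\D_iu^\dagger$ on its singular support; for uniqueness, I additionally need the strict bound $|\varphi_i|<1$ away from that support.

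Under \cref{ass:consistentGradient} the sign of $\D_x u^\dagger$ on each vertical line $\{x_m\}\times\R/\Z$ is a single value $\epsilon_m^\ver\in\{\pm 1\}$, and analogously $\D_y u^\dagger$ has a constant sign $\epsilon_n^\hor$ along $\R/\Z\times\{y_n\}$; this permits a fully separable ansatz $\varphi(x,y)=(q(x),r(y))$ with $q,r$ trigonometric polynomials on $\R/\Z$ of degree $\leq \fcut$. Then $-\dive\varphi=-q'(x)-r'(y)$ has spectrum contained in $\{|k|_\infty\leq\fcut\}$ and therefore lies in the range of $K^\dual$, while the conditions on $\varphi$ decouple into two independent one-dimensional interpolation problems: find $q$ with $q(x_m)=\epsilon_m^\ver$, $|q|\leq 1$ on $\R/\Z$, and $|q|<1$ off $\{x_1,\ldots,x_M\}$, and analogously $r$ at the $y_n$. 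These are exactly the problems solved by the Fej\'er-kernel construction of Cand\`es and Fern\'andez-Granda, which admits a solution whenever the minimum spacing of the nodes exceeds a universal constant $C_0$ times $1/\fcut$; taking $C\geq C_0$ secures the certificate.

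A standard Bregman argument then closes the proof. For any other minimizer $u$ of $J_0^{f^\dagger}$ one has $\TVp(u)=\TVp(u^\dagger)$ and $\langle\eta,u-u^\dagger\rangle=\langle w,K(u-u^\dagger)\rangle=0$, so equality in the subgradient inequality; combined with the saturation $\int\varphi_i\,\d\D_i u^\dagger=|\D_i u^\dagger|(\Omega)$ built into the construction, this forces $\int\varphi_i\,\d\D_i u=|\D_i u|(\Omega)$ for $i\in\{x,y\}$, whence the strict bound $|\varphi_i|<1$ off the grid lines pins $\D_x u$ to $\bigcup_m\{x_m\}\times\R/\Z$ and $\D_y u$ to $\bigcup_n\R/\Z\times\{y_n\}$. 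Consequently $u$ is constant on each open rectangle $(x_m,x_{m+1})\times(y_n,y_{n+1})$, so $u-u^\dagger$ lies in the finite-dimensional space of functions constant on these rectangles. Injectivity of $K$ on that space factorises, via $\hat v_{k_1,k_2}=\sum_{m,n}v_{mn}\int_{x_m}^{x_{m+1}}e^{-2\pi\ii k_1 x}\wrt x\int_{y_n}^{y_{n+1}}e^{-2\pi\ii k_2 y}\wrt y$, into two one-dimensional Vandermonde-type injectivities, which hold because $M,N<1/\Delta<\fcut/C$ falls below $2\fcut$ for $C$ sufficiently large; this gives $u=u^\dagger$.

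The main obstacle is the dual-certificate construction, and the whole reason the theorem goes through is that \cref{ass:consistentGradient} reduces that construction to the well-understood one-dimensional Cand\`es--Fern\'andez-Granda interpolation. Without it, $\varphi_1$ would have to interpolate sign-changing values along a vertical line, a problem that is neither separable nor solvable in general, and for which exact recovery must be expected to fail.
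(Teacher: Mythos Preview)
Your proposal is correct and, for the dual-certificate half, essentially identical to the paper: the separable ansatz $\varphi(x,y)=(q(x),r(y))$ with $q,r$ built from Fej\'er kernels \`a la Cand\`es--Fernandez-Granda is exactly how the paper constructs its certificate (\cref{thm:existencePolynomials,thm:dualCertificatesI}), and your Bregman argument forcing $\D_xu$ onto $\Lv$ and $\D_yu$ onto $\Lh$ is the content of \cref{thm:exactRecoverySupport}.

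The genuine difference is in the second step, identifying the values $u_{mn}$. The paper does this via a \emph{second} family of dual certificates (\cref{thm:exactRecovery,thm:dualCertificatesII}): for every sign pattern $(s_{mn})$ it builds $K^\dual w$ from tensor products of smoothed characteristic functions $\cha_{[x_m,x_{m+1}]}(x)\cha_{[y_n,y_{n+1}]}(y)$ so that $s_{mn}\int_{\text{cell}}K^\dual w>0$, and then tests $u-u^\dagger$ against it. You instead invoke injectivity of $K$ on the finite-dimensional space of functions constant on the grid cells, which factorises as $\hat v=AVB^T$ and reduces to full column rank of two one-dimensional Vandermonde-type matrices; this holds once $M,N\leq2\fcut+1$, guaranteed by $M,N\leq1/\Delta<\fcut/C$. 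Your route is shorter and avoids the Fej\'er-based construction of \cref{thm:charFun}, but is specific to the truncated Fourier operator; the paper's certificate approach is formulated so that the abstract \cref{thm:exactRecovery} applies to any $K$ for which such certificates can be built. The paper in fact explicitly acknowledges your alternative in the remark following \cref{thm:exactRecovery}.
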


In case of noisy measurements $f^\delta$ with $\frac12|f^\delta-f^\dagger|_2^2<\delta$ one obtains a bound on the reconstruction error in the $L^1(\Omega)$-norm $\|\cdot\|_1$.

\begin{thm}[Convergence for vanishing noise] \label{thm:convergence_intro}
Let \cref{ass:consistentGradient} hold and $K$ be the operator \eqref{eqn:truncatedFourier}.
There exists a constant $C>0$ such that, if $\Delta>\frac C{\fcut}$, then any minimizer $u^\delta$ of
\begin{equation*}
J_{\alpha}^{f^\delta}(u)=\TVp(u)+\tfrac1{2\alpha}|Ku-f^\delta|_2^2
\end{equation*}
for the choice $\alpha=\sqrt\delta$ satisfies
\begin{equation*}
\|u^\delta-u^\dagger\|_1\leq C\delta^{1/4}.
\end{equation*}
\end{thm}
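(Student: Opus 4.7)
The approach is the classical source-condition/Bregman-distance argument, paired with a ``Bregman-to-$L^1$'' conversion tailored to anisotropic $\TVp$. The starting ingredient is the dual certificate $\varphi^\dagger=(\varphi_1^\dagger,\varphi_2^\dagger)\in L^\infty(\Omega;\R^2)$ produced in the proof of \cref{thm:exact_recon_intro}, for which $|\varphi_i^\dagger|_\infty\le1$, $-\dive\varphi^\dagger=K^\dual\optimalDual$ for some dual variable $\optimalDual$ of controlled norm, and which saturates $\TVp$ at $u^\dagger$ (so $\varphi_1^\dagger$ equals the edge sign on each $\{x_m\}\times(\R/\Z)$, analogously for $\varphi_2^\dagger$ on horizontal edges). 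The argument further assumes from that construction a quantitative sharpness: constants $\kappa,\eta>0$ with $1-\varphi_1^\dagger(x,y)\,s\ge\kappa\,\dist(x,x_m)^2$ near the nearest vertical edge (with $s$ the corresponding edge sign) and $1-|\varphi_1^\dagger|\ge\eta$ on regions bounded away from all $x_m$, and analogously for $\varphi_2^\dagger$.

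Combining $J_\alpha^{\noisyData}(u^\delta)\le J_\alpha^{\noisyData}(u^\dagger)$ with the subgradient inequality for $K^\dual\optimalDual\in\subdif\TVp(u^\dagger)$ and Young's inequality gives, with $\alpha=\sqrt\delta$,
\[
\BregmanDistance{K^\dual\optimalDual}{\TVp}(u^\delta,u^\dagger)\;\lesssim\;\sqrt\delta,\qquad |Ku^\delta-\noiseFreeData|_2\;\lesssim\;\delta^{1/4},\qquad \TVp(u^\delta)\;\lesssim\;1.
\]
Writing the Bregman distance as $\int(1-\varphi_1^\dagger\sigma_1^\delta)\wrt|\D_xu^\delta|+\int(1-\varphi_2^\dagger\sigma_2^\delta)\wrt|\D_yu^\delta|$, with $\sigma_i^\delta$ the polar factors of $\D_xu^\delta,\D_yu^\delta$, the pointwise sharpness of $\varphi^\dagger$ then converts the Bregman bound into a weighted second-moment estimate for $|\D_xu^\delta|$ around the vertical edges and for $|\D_yu^\delta|$ around the horizontal edges, both of order $\sqrt\delta$, together with an $O(\sqrt\delta/\eta)$ bound on the mass of $|\D u^\delta|$ sitting outside thin tubular neighbourhoods of the edges.

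Next, let $u^{\mathrm{pc}}$ denote the function that is piecewise constant on the grid of $u^\dagger$ with the rectangle averages of $u^\delta$ as its values. A one-dimensional slicing argument along horizontal lines, combined with the fundamental theorem of calculus for BV, bounds the $L^1$-error between $u^\delta$ and $u^{\mathrm{pc}}$ on each $R_{mn}=[x_m,x_{m+1})\times[y_n,y_{n+1})$ by a first moment of $|\D_xu^\delta|$ with respect to the vertical boundaries of $R_{mn}$; Cauchy--Schwarz against the uniform TV bound then gives $\|u^\delta-u^{\mathrm{pc}}\|_1\lesssim(\sqrt\delta\cdot\TVp(u^\delta))^{1/2}\lesssim\delta^{1/4}$, and the $y$-direction is treated symmetrically. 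The remaining piece $v:=u^{\mathrm{pc}}-u^\dagger$ is piecewise constant on the same finite grid; since
\[
|Kv|_2\;\le\;\|K\|_{L^1\to\ell^2}\,\|u^{\mathrm{pc}}-u^\delta\|_1+|Ku^\delta-\noiseFreeData|_2\;\lesssim\;\delta^{1/4}
\]
and, under $\Delta>C/\fcut$, $K$ is stably invertible on the finite-dimensional space of grid-piecewise-constant functions, one gets $\|v\|_1\lesssim|Kv|_2\lesssim\delta^{1/4}$, and the triangle inequality delivers the claim.

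The main obstacle is extracting the sharp quantitative behaviour of $\varphi^\dagger$ from the construction underlying \cref{thm:exact_recon_intro}: quadratic vanishing of $1-\varphi_i^\dagger s$ near the edges of $u^\dagger$ together with uniform separation from $1$ on the complement of thin tubular neighbourhoods. These two ingredients calibrate the $\delta^{1/4}$ exponent through the Cauchy--Schwarz trade-off (quadratic decay and bounded TV $\Rightarrow$ first moment $\sim\sqrt{\text{second moment}}$), so any deterioration of either quantitative property would directly impact the rate. A secondary technical point is the $L^1$-stability of $K$ on grid-piecewise-constant functions with minimum cell size $\Delta>C/\fcut$, which I expect to follow from the same separation reasoning that drives the exact recovery result, applied this time on the finite-dimensional parameter space of the grid.
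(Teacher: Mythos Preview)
Your outline is essentially correct and shares with the paper the same opening move: the source condition yields a Bregman bound of order $\sqrt\delta$, and the quadratic sharpness of the certificate near the edges together with the uniform gap away from them converts this into the weighted second-moment estimate $\TVd{\Lv}^\hor(u^\delta)+\TVd{\Lh}^\ver(u^\delta)\lesssim\sqrt\delta$ and the unweighted bound $\TV^\hor(u^\delta;(\Lv_R)^c)+\TV^\ver(u^\delta;(\Lh_R)^c)\lesssim\sqrt\delta$. From there, however, the two arguments diverge.

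The paper does \emph{not} pass through the cell-average projection $u^{\mathrm{pc}}$ or through any stable-invertibility statement for $K$ on grid-piecewise-constant functions. Instead it builds an intermediate function $\tilde u^\delta$ by freezing $u^\delta$ on carefully selected lines $x=x_m^\pm$, $y=y_n^\pm$ (chosen via a pigeonhole argument so that the sliced $\TV$ is controlled) and extending constantly across the thin strips around $L$. A two-dimensional interpolation lemma (\cref{thm:normInterpolation}) bounds $\|u^\delta-\tilde u^\delta\|_1$ by $\sqrt{\TVp}\cdot\sqrt{\text{weighted }\TV}\lesssim\delta^{1/4}$. The remaining piece $\|\tilde u^\delta-u^\dagger\|_1$ is handled not by inverting $K$ but by a second family of dual certificates $g^\ver,g^\hor$ (\cref{thm:modificationError,thm:dualCertificatesIII}) with prescribed signed averages on the edge segments, which directly control $\TVp(\tilde u^\delta-u^\dagger)$ and hence, via Poincar\'e, the $L^1$-norm.

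Your route is more direct: the Poincar\'e-type inequality $\|u-\bar u\|_{L^1(I)}\le 2\int_I\dist(t,\partial I)\,\d|u'|(t)$ combined with Cauchy--Schwarz indeed gives $\|u^\delta-u^{\mathrm{pc}}\|_1\lesssim\delta^{1/4}$ once you average first in $x$ and then in $y$. The one point you leave open is the quantitative inequality $\|v\|_1\lesssim|Kv|_2$ for $v$ piecewise constant on the grid with a constant depending only on $\Delta\fcut$. This is in fact a consequence of the very certificates used in the exact-recovery proof: taking $g=K^\dual w$ from \cref{thm:dualCertificatesII} with $s_{mn}=\sgn(v_{mn})$, one has $\|g\|_\infty\le1$ (hence $|w|_2\le1$ by Parseval) and $s_{mn}\int_{R_{mn}}g\ge c\,|R_{mn}|$ for a universal $c>0$, which immediately gives $c\|v\|_1\le\langle Kv,w\rangle\le|Kv|_2$. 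So your expected stability holds and your argument closes; the paper's approach simply bypasses this step by working with dual certificates throughout rather than appealing to finite-dimensional invertibility.
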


Both theorems will be proved in \cref{sec:reconstructionError} under source conditions on $u^\dagger$,
while we prove in \cref{sec:dualVariables} that these source conditions are satisfied for $K$ being the truncated Fourier series and $\Delta$ large enough.
The latter proofs will be based on the explicit construction of dual variables via trigonometric polynomials,
the simplest of which was essentially already given in \cite{Candes2014}.

\begin{rem}[Optimality of results]\label{rem:rate_not_optimal} Even though \cref{ass:consistentGradient} appears rather technical, we still expect a (possibly weakened) assumption of this type to be necessary for exact reconstruction; evidence for this will be provided by our numerical experiments. The rate $\delta^{1/4}$ of theorem \ref{thm:convergence_intro} on the other hand is expected to be suboptimal. Indeed, in the case of sparse spikes superresolution, results corresponding to a rate of $\delta^{1/2}$ were provided in \cite{Duval2015}, and we conjecture that
an improved convergence rate also holds in our setting. This conjecture is backed up by our numerical experiments, which clearly show the improved rate of $\delta^{1/2}$. 
\end{rem}

\subsection{Notation and preliminaries}\label{sec:notation}
As already done before, we will denote the $\ell^p$-norm on a finite-dimensional vector space by $|\cdot|_p$, while the $L^p(\Omega)$-norm is denoted $\|\cdot\|_p$.
We write $\dist((x,y),(\tilde x,\tilde y))=\min\{|(x-\tilde x+k,y-\tilde y+l)|_2\,|\,k,l\in\Z\}$ for the Euclidean distance between two points $(x,y),(\tilde x,\tilde y)\in\Omega$.
For $R>0$ the open Euclidean $R$-neighbourhood of a set $A\subset\Omega$ is denoted $B_R(A)=\{(x,y)\in\Omega\,|\,\dist((x,y),A)<R\}$,
where $\dist((x,y),A)$ is short for $\inf\{\dist((x,y),(\tilde x,\tilde y))\,|\,(\tilde x,\tilde y)\in A\}$.
The analogous notation is used on $\R/\Z$.
Furthermore, by $A^c$ we denote the complement of $A$ in $\Omega$ or $\R/\Z$.
For simplicity we will often identify $\R/\Z$ and $[0,1]$ with periodic boundary conditions (thus $0$ is identified with $1$).
Consistently, for $0\leq a\leq b\leq 1$ we denote by $[a,b]$ the usual interval, while for $0\leq b\leq a\leq1$ we interpret $[a,b]$ as $[0,b]\cup[a,1]$ (analogous for open intervals).
Also, contrary to the usual convention, we will write $\int_a^b$ for $\int_{[a,b]}$.
The length of such an interval will be denoted $\length([a,b])$.
For $r,s>0$ we will use the notation $r\lesssim s$ to indicate $r\leq Cs$ for some fixed numeric constant $C>0$.

The (possible) horizontal and vertical edge locations in the ground truth image $u^\dagger$ we denote by
\begin{equation*}
\Lh=\R/\Z\times\{y_1,\ldots,y_N\}\subset\Omega,\qquad
\Lv=\{x_1,\ldots,x_M\}\times\R/\Z\subset\Omega,
\end{equation*}
compare \cref{fig:grid} left.
Furthermore, we will abbreviate
\begin{equation*}
L=\Lh\cup\Lv,\qquad
\Lh_R=B_R(\Lh),\qquad
\Lv_R=B_R(\Lv),\qquad
L_R=B_R(L).
\end{equation*}

\begin{figure}
\centering
\includegraphics{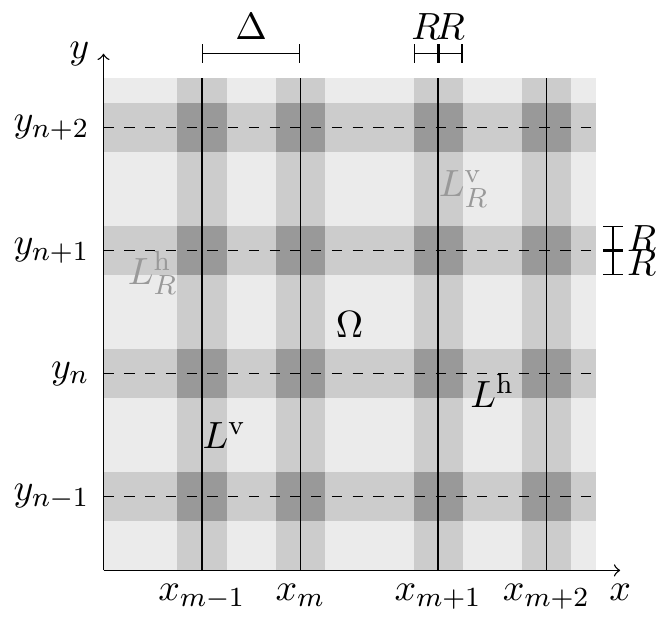}
\hfill
\includegraphics{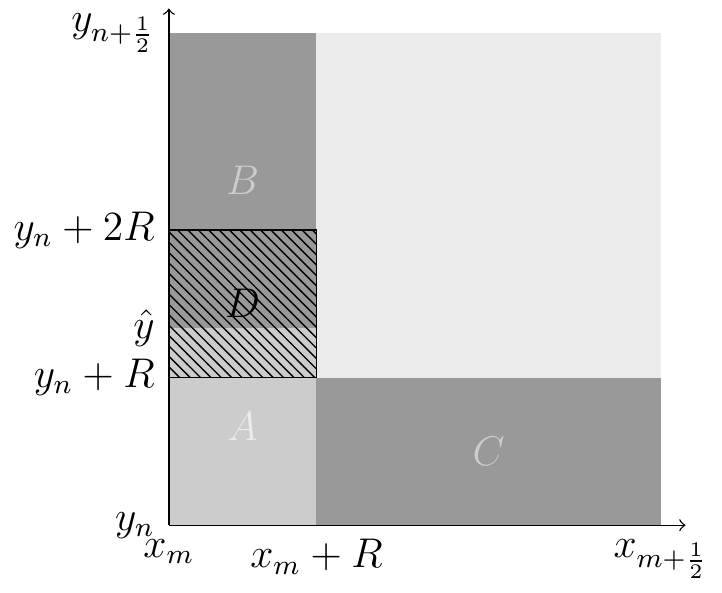}
\caption{Illustration of the edge locations $\Lh$, $\Lv$ together with their neighborhoods $\Lh_R$ and $\Lv_R$ in the full domain $\Omega$ (left plot; introduced in \cref{sec:notation}) and of the decomposition of the bottom left quarter of $[x_m,x_{m+1}]\times[y_n,y_{n+1}]$ into the sets $A,B,C,D$ (right plot; introduced and employed in the proof of \cref{thm:normInterpolation}).}%
\label{fig:grid}
\end{figure}

\notinclude{
\subsection{Domains and sets}

For $k=1,\ldots,m$ and $l=1,\ldots,n$, we further set 
\[ \Lv_{k,l}= \{ (x_k,y) \st y \in ]y_{l},y_{l+1}[ \}, \quad \Lh_{k,l} = \{ (x,y_l) \st x \in ]x_{k},x_{k+1}[ \}
\] 
as well as
\[ \Lvm = \bigcup _{l} \Lv_{k,l}, \quad \Lhn = \bigcup _{k} \Lh_{k,l},
\]
\[ \Lv = \bigcup _{k} \Lv_{k}, \quad \Lh = \bigcup _{l} \Lh_{l},
\]
and
\[ L = \Lv \cup \Lh .
\]
We also denote by $P\Lv$ the one dimensional set obtained by projecting $\Lv$ to the vertical line, by $P\Lv$ the one dimensional set obtained by projecting $\Lv$ to the horizontal line and likewise for $P\Lvm$, $P\Lv_{k,l}$, $P\Lhn$ and $P\Lh_{k,l}$.
In this context, note that $\supp(\D u^\dagger) \subset L$ up to points of $\hd^1$-measure zero.

Further, with $d_{\Sc^1} (s,t) = \min_{k \in \Z} | s - t - k|$ and for $k=1,\ldots,m-1$ and $l=1,\ldots,n-1$, define
\[ \Delta_k = d_{\Sc^1}(x_k,x_{k+1}),\quad \Delta_l= d_{\Sc^1}(y_l,y_{l+1}),
\]
\[ \Delta_m = d_{\Sc^1}(x_m,x_{1}),\quad \Delta_n= d_{\Sc^1}(y_n,y_{1}),
\]
and
\[ \Delta = \min \{ \min_{k=1,\ldots,m} \Delta_k ,\min_{l=1,\ldots,n} \Delta_l \}.
\]
}%

The vector space of infinitely smooth real functions with compact support on a (open or closed) subset $A$ of $\R/\Z$ or $\R$ or $\Omega$ is denoted $C_c^\infty(A)$.
If the functions are $\R^2$-valued we indicate this by $C_c^\infty(A,\R^2)$.
Now let $\omega$ denote a one-dimensional domain ($\R/\Z$ or $\R$ or an interval in $\R$).
For an (open or closed) interval $A\subset\omega$ we define the total variation of a function $u\in L^1(\omega)$ as
\begin{equation*}
\TVp(u;A)= \sup \left \{ \int_A u \varphi'\wrt x \,\middle|\, \varphi \in C_c^\infty(A),\, \|\varphi\|_\infty\leq 1 \right\}.
\end{equation*}
In case of $A=\omega$ we just write $\TVp(u)$.
It is well-known that this total variation is finite if and only if the distributional derivative $u'$ of $u$ is a Radon measure,
in which case the total variation equals the total mass of the measure $u'$, that is, $\TVp(u)=|u'|(\omega)$.
Note that by our above definition, $\TVp(u;A)$ ignores potential discontinuities on the boundary $\partial A$ of the interval $A$, thus $\TVp(u;[a,b])=\TVp(u;]a,b[)=|u'|(]a,b[)$.
We furthermore introduce a weighted total variation: Let $S\subset\R/\Z$, then we set
\begin{equation*}
\TVd{S}(u;A)=\int_A\dist(x,S)^2\wrt|u'|(x).
\end{equation*}
Similarly we define the horizontal, vertical, and total variation in two dimensions:
Denote the (distributional) $x$- and $y$-derivative by $\D_x$ and $\D_y$, respectively, and the gradient by $\D$.
Let $A\subset\Omega$ be open or closed, then we set
\begin{align*}
\TV^\hor(u;A)
&=\sup\left\{\int_A u \D_x\varphi\wrt(x,y) \,\middle|\, \varphi \in C_c^\infty(A),\, \|\varphi\|_\infty\leq 1 \right\}
=|\D_xu|(A),\\
\TV^\ver(u;A)
&=\sup\left\{\int_A u \D_y\varphi\wrt(x,y) \,\middle|\, \varphi \in C_c^\infty(A),\, \|\varphi\|_\infty\leq 1 \right\}
=|\D_yu|(A),\\
\TVp(u;A)
&=\sup\left\{\int_A u \dive\varphi\wrt(x,y) \,\middle|\, \varphi \in C_c^\infty(A,\R^2),\, \sup_{(x,y)\in A}|\varphi(x,y)|_\infty\leq 1 \right\}\\
&=\TV^\hor(u;A)+\TV^\ver(u;A).
\end{align*}
Note that $\TVp$ denotes an anisotropic version of the total variation functional (sometimes known as $\ell^1-\TV$) which prefers vertical and horizontal jumps,
but we do not highlight this in our notation since we will exclusively deal with this version.
As in one space dimension we will also need the weighted versions, so for $S\subset\Omega$ we define
\begin{align*}
\TVd{S}^\hor(u;A)&=\int_A\dist((x,y),S)^2\wrt|\D_xu|(x,y),\\
\TVd{S}^\ver(u;A)&=\int_A\dist((x,y),S)^2\wrt|\D_yu|(x,y),\\
\TVd{S}(u;A)&=\int_A\dist((x,y),S)^2\wrt|\D u|_1(x,y),
\end{align*}
where $|\D u|_1$ is short for $|\D_xu|+|\D_yu|$.
As above, whenever $A=\Omega$ we drop the dependence on $A$ in the notation.
The space of functions of bounded variation (with periodic boundary conditions) is then defined as
\begin{equation*}
\BV(\Omega) =  \{ u \in L^1(\Omega) \,|\, \TVp(u) <\infty \}.
\end{equation*}
As can be shown for instance by adapting the proofs in \cite{holler16tvsubdif_mh} to the case of periodic boundary conditions, the subdifferential of $\TVp$, regared as functional $\TV:L^2(\Omega) \rightarrow [0,\infty]$, is given as
\begin{equation*}
\partial\TVp(u)=\left\{-\dive \varphi\,\middle|\,\varphi\in H(\dive;\Omega),\,\sup_{(x,y)\in\Omega}|\varphi(x,y)|_\infty\leq1,\,T_{\D u}\varphi=1\right\},
\end{equation*}
where the normal trace $T_{\D u}\varphi$ is defined as the unique function in the $|\D u|_1$-weighted $L^1$-space $L^1(\Omega;|\D u|_1)$ satisfying
\begin{equation*}
\int_\Omega T_{\D u}\varphi\psi\,\d|\D u|_1=-\int_\Omega u\dive(\varphi\psi)\,\d x
\end{equation*}
for all $\psi\in C_c^\infty(\Omega)$.

Finally, note that we consider the minimization of $\TVp(u)+\iota_{\{f^\dagger\}}(Ku)$ and $\TVp(u)+\tfrac1{2\alpha}|Ku-f^\delta|_2^2$ over the space $L^2(\Omega)$. By the continuous embedding of $\BV(\Omega)$ in $L^2(\Omega)$ (see \cite[Corollary 3.49]{Ambrosio}), this is equivalent to minimizing over $\BV(\Omega)$.
Consequently, we regard the bounded linear measurement operator $K$ as mapping $K:L^2(\Omega)\to Y$, where $Y$ denotes the finite-dimensional image space endowed with the Euclidean norm. Note also that we denote by $K^\dual:Y^\dual  \rightarrow L^2(\Omega)^\dual$ the dual operator of $K$.

\subsection{Norm interpolation}
In essence, we will later reduce $L^1$-norm estimates to total variation estimates,
which is possible since the total variation is stronger than the $L^1$-norm (in one space dimension even stronger than the $L^\infty$-norm).
The total variation of our reconstruction error, however, will only be small in certain areas, while in other areas we only have control of a \emph{weighted} total variation.
Still we will be able to obtain a bound on the $L^1$-norm by some norm interpolation,
based on the following \namecref{thm:normInterpolationInterval} in one space dimension.

\begin{lem}[Weighted TV interpolation with boundary singularity]\label{thm:normInterpolationInterval}
Let $a>0$ and set $\omega=[0,a]$. If $u\in\BV(\omega)$ satisfies $u(a)=0$ (in the BV-trace sense), then
\begin{equation*}
\|u\|_1 \leq \sqrt{\TVd{\{0\}}(u)\TV(u)}.
\end{equation*}
\end{lem}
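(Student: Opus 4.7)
The plan is to exploit the boundary condition $u(a)=0$ to bound $|u(x)|$ pointwise by the total variation of $u$ on the interval $[x,a]$, then to integrate in $x$ and swap the order of integration to produce the weight function $t$, and finally to apply Cauchy--Schwarz with respect to the measure $|u'|$ so that the weight $t$ splits into $t^2$ and $1$, yielding exactly $\TVd{\{0\}}(u)$ and $\TV(u)$.

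In more detail, the first step is to observe that since $u\in\BV(\omega)$ with trace $u(a)=0$, the fundamental theorem for BV functions gives the representation $u(x) = -u'([x,a])$ (interpreted at the right-continuous representative, with the jump at $x=a$ absorbed by the trace condition), and hence the pointwise estimate
\begin{equation*}
|u(x)| \leq |u'|([x,a]) \quad \text{for almost every } x\in[0,a].
\end{equation*}
Integrating this inequality in $x$ and applying Fubini to the indicator $\chi_{\{0\leq x\leq t\leq a\}}$ yields
\begin{equation*}
\|u\|_1 = \int_0^a |u(x)|\wrt x \leq \int_0^a |u'|([x,a])\wrt x = \int_{[0,a]} t \wrt|u'|(t).
\end{equation*}

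The second step is a Cauchy--Schwarz estimate with respect to the positive measure $|u'|$: writing $t = t\cdot 1$ and noting that $\dist(t,\{0\}) = t$ on $[0,a]$,
\begin{equation*}
\int_{[0,a]} t \wrt|u'|(t) \leq \left(\int_{[0,a]} t^2 \wrt|u'|(t)\right)^{1/2}\!\left(\int_{[0,a]} 1\wrt|u'|(t)\right)^{1/2} = \sqrt{\TVd{\{0\}}(u)\,\TV(u)}.
\end{equation*}
Combining the two displays gives the claim.

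The only mild obstacle is making the pointwise representation of $u$ rigorous at the endpoint; this requires choosing the right-continuous good representative and interpreting the BV trace $u(a)$ as the left-sided limit, so that any jump of $u'$ at $a$ itself is consistent with the stated trace value. Once this technicality is handled, no further estimates are needed and the result follows directly from the Fubini swap and the one-line Cauchy--Schwarz application above.
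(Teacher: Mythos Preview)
Your proposal is correct and follows essentially the same approach as the paper: represent $u(x)$ via the fundamental theorem of calculus for BV functions using the trace condition $u(a)=0$, bound $|u(x)|\leq|u'|([x,a])$, apply Fubini to obtain $\int_0^a t\wrt|u'|(t)$, and finish with Cauchy--Schwarz in $L^2(|u'|)$. The paper's proof is the same computation, only slightly more terse about the endpoint technicality you flag.
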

\begin{proof}
Denote the derivative of $u$ by $u'$, a Radon measure on $\omega$. We have
$u(x)=-\int_x^a\d u'(t)$
and can therefore estimate
\begin{multline*}
\|u\|_1
=\int_0^a\left|\int_x^a\d u'(t)\right|\,\d x
\leq\int_0^a\int_x^a\d |u'|(t)\,\d x
=\int_0^a\int_0^a\chi_{\{t>x\}}(x)\,\d |u'|(t)\,\d x\\
=\int_0^at\,\d|u'|(t)
\leq\sqrt{\int_0^at^2\,\d|u'|(t)\cdot\int_0^a\d|u'|(t)}
=\sqrt{\TVd{\{0\}}(u)\TV(u)},
\end{multline*}
the last inequality being the Cauchy--Schwarz inequality in the Lebesgue space $L^2$ with respect to the measure $|u'|$.
\end{proof}

\notinclude{
\begin{lem}[Weighted TV interpolation with interior singularity]\label{thm:normInterpolationDoubleInterval}
Let $a<0<b$ and set $\omega=[a,b]$. If $u\in\BV(\omega)$ satisfies $u(a)=u(b)=0$ (in the BV-trace sense), then
\begin{equation*}
\|u\|_1 \leq \sqrt{\TVd{\{0\}}(u)\TV(u)}.
\end{equation*}
\end{lem}
\begin{proof}
Applying the previous \crefname{thm:normInterpolationInterval} separately to $u$ on $[a,0]$ and $[0,b]$ we obtain
\begin{multline*}
\|u\|_{1}
=\|u|_{[a,0]}\|_1+\|u|_{[0,b]}\|_1\\
\leq\sqrt{\TV(u;[a,0])\TVd{\{0\}}(u;[a,0])}+\sqrt{\TV(u;[0,b])\TVd{\{0\}}(u;[0,b])}
\leq\sqrt{\TV(u)\TVd{\{0\}}(u)},
\end{multline*}
where the last step follows from $\sqrt{\alpha\beta}+\sqrt{\gamma\delta}\leq\sqrt{(\alpha+\gamma)(\beta+\delta)}$ for all nonnegative $\alpha,\beta,\gamma,\delta$,
as can be checked by direct computation.
\end{proof}
}%

To extend this one-dimensional result to two space dimensions we require the following statement.

\notinclude{
\begin{lem}[Generic cross-sections]\label{lem:line_from_area_general}
Consider a non-negative Radon measure $\mu_x \prodm{x} \wrt x$ on $[a,b]\times[c,d]$,
that is, a product of one-dimensional measures $\mu_x$ and the Lebesgue measure.
Then there exists a subset $S\subset[a,b]$ of positive Lebesgue measure such that
\begin{equation*}
\int_c^d \wrt \mu_{\hat{x}} (y) \leq  \frac1{b - a}  \int_{a}^{b}\int_c^d \wrt\mu_x (y) \wrt x
\qquad\text{for all }\hat x\in S.
\end{equation*}
\end{lem}
\begin{proof}
Assume the assertion does not hold, then
\[
\int_{a}^{b}\int_c^d \wrt\mu_{\hat x} (y) \wrt\hat x
>\int_{a}^{b}\frac1{b - a}  \int_{a}^{b}\int_c^d \wrt\mu_x (y) \wrt x \wrt\hat x
=\int_{a}^{b}\int_c^d \wrt\mu_{\hat x} (y) \wrt\hat x,
\]
a contradiction.
\end{proof}
}%

\begin{lem}[Generic cross-sections]\label{lem:line_from_area_general}
Let $\varphi_i:[a,b]\to[0,\infty)$ be non-negative measurable functions for $i=1,\ldots,I$.
Then there exists a subset $S\subset[a,b]$ of positive Lebesgue measure such that
\begin{equation*}
\varphi_i(\hat{x}) \leq  \frac I{b - a}  \int_{a}^{b}\varphi_i(x) \wrt x,
\quad i=1,\ldots,I,
\qquad\text{for almost all }\hat x\in S.
\end{equation*}
\end{lem}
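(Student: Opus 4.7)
The plan is to apply Markov's inequality to each $\varphi_i$ and take $S$ to be the complement of the resulting ``bad sets''. For each $i$, set $c_i=\frac{I}{b-a}\int_a^b\varphi_i(x)\wrt x$ and $B_i=\{x\in[a,b]:\varphi_i(x)>c_i\}$, and define $S=[a,b]\setminus\bigcup_{i=1}^I B_i$. By construction, every $\hat{x}\in S$ satisfies $\varphi_i(\hat{x})\le c_i$ for all $i$, which is exactly the required inequality, so it remains only to show that $S$ has positive Lebesgue measure.

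To that end I would first establish the strict bound $|B_i|<(b-a)/I$ for each $i$. If $c_i=0$, then $\varphi_i=0$ almost everywhere on $[a,b]$, so $|B_i|=0$ automatically. If $c_i>0$ and $|B_i|>0$, then since $\varphi_i>c_i$ holds strictly on $B_i$ one obtains
\[
c_i|B_i|<\int_{B_i}\varphi_i(x)\wrt x\le\int_a^b\varphi_i(x)\wrt x=c_i\,\frac{b-a}{I},
\]
and dividing by $c_i$ yields $|B_i|<(b-a)/I$. A union bound then gives $\bigl|\bigcup_{i=1}^I B_i\bigr|<I\cdot\frac{b-a}{I}=b-a$, so $|S|>0$, completing the argument.

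The proof is essentially a one-line Markov estimate, so there is no real obstacle. The only subtlety is preserving strict inequality (rather than just $\le$) in the Markov bound: this is ensured by defining the bad sets with strict $>$ and by separating out the case $|B_i|=0$, which together guarantee that the union bound yields strictly less than $b-a$ and hence that $S$ is genuinely of positive measure.
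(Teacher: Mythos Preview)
Your proof is correct. Both your argument and the paper's are elementary averaging arguments, but they follow slightly different routes. The paper first normalizes so that each $\varphi_i$ has mean $1$, then argues by contradiction: if no such $S$ existed, then $\max_i\varphi_i>I$ almost everywhere, yielding
\[
I<\frac{1}{b-a}\int_a^b\max\{\varphi_1,\ldots,\varphi_I\}\wrt x\le\frac{1}{b-a}\int_a^b(\varphi_1+\cdots+\varphi_I)\wrt x=I,
\]
a contradiction. Your Markov-plus-union-bound approach is equally short and has the advantage of being constructive about $S$ and explicit about the strict inequality; the paper's $\max\le\sum$ trick is a touch slicker in that it handles all $i$ simultaneously without tracking individual bad sets.
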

\begin{proof}
Without loss of generality, by rescaling the functions $\varphi_i$ we may assume $\frac1{b-a}\int_a^b\varphi_i(x)\wrt x=1$ for all $i=1,\ldots,I$
(the case where the integral vanishes is trivial since it implies $\varphi_i=0$).
Assume the assertion does not hold, that is, $\max\{\varphi_1,\ldots,\varphi_I\}>I$ almost everywhere, then
\[
I
<\frac1{b-a}\int_a^b\max\{\varphi_1(x),\ldots,\varphi_I(x)\}\wrt x
\leq\frac1{b-a}\int_a^b\varphi_1(x)+\ldots+\varphi_I(x)\wrt x
=I,
\]
a contradiction.
\end{proof}

\begin{lem}[Weighted TV interpolation with interior singularity lines]\label{thm:normInterpolation}
Let $R<\Delta/4$.
Any $u\in\BV(\Omega)$ which vanishes outside a compact subset of $L_R$ satisfies
\begin{align*}
\|u\|_1&\leq3\sqrt{\TVp(u)}\left(\sqrt{\TVd{\Lh}^\ver(u;\Lh_{2R})}+\sqrt{\TVd{\Lv}^\hor(u;\Lv_{2R})}\right),\\
\sum_{n=1}^N\|u(\cdot,y_n-R)\|_1+\|u(\cdot,y_n+R)\|_1
&\leq\TV^\ver(u;(\Lh_R)^c)+\tfrac1R\sqrt{\TV^\hor(u)\TVd{\Lv}^\hor(u;\Lv_R)},\\
\sum_{m=1}^M\|u(x_m-R,\cdot)\|_1+\|u(x_m+R,\cdot)\|_1
&\leq\TV^\hor(u;(\Lv_R)^c)+\tfrac1R\sqrt{\TV^\ver(u)\TVd{\Lh}^\ver(u;\Lh_R)},\\
\end{align*}
where $u(\cdot,y_n\pm R)$ and $u(x_m\pm R,\cdot)$ denote the BV-traces of $u$ restricted to $(\Lh_R)^c$ and $(\Lv_R)^c$, respectively.
\end{lem}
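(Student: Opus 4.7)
The plan is to derive each inequality by slicing $u$ in one coordinate direction, applying the one-dimensional interpolation of \cref{thm:normInterpolationInterval} on the resulting slice, and then integrating the slice estimate in the transverse variable with Cauchy--Schwarz and the standard slicing identity $\int_{\R/\Z}\TV^\ver(u(x_0,\cdot))\wrt x_0=\TV^\ver(u)$ (and its horizontal analogue). In each slice, the weight-singularity point is placed at the nearest line of $\Lh$ or $\Lv$, splitting the slice at that point when needed and combining the two halves via $\sqrt{\alpha\beta}+\sqrt{\gamma\delta}\leq\sqrt{(\alpha+\gamma)(\beta+\delta)}$.

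For the second inequality (the third being symmetric under $x\leftrightarrow y$) the key step is an averaging argument. For any $y_0<y_n-R$ the one-dimensional BV pointwise bound $|u(x,(y_n-R)^-)|\leq|u(x,y_0)|+\TV^\ver(u(x,\cdot);[y_0,y_n-R))$ holds; averaging this over $y_0$ in the upper half $((y_{n-1}+y_n)/2,y_n-R)$ of the gap below strip $n$ (of length $\geq\Delta/2-R>R$ by $R<\Delta/4$), Fubini and integration in $x$ yield
\[
\|u(\cdot,y_n-R)\|_1\leq\TV^\ver\!\left(u;\R/\Z\times\bigl[\tfrac{y_{n-1}+y_n}{2},y_n-R\bigr)\right)+\tfrac1R\|u\|_{L^1(\R/\Z\times((y_{n-1}+y_n)/2,y_n-R))}.
\]
The trace $u(\cdot,y_n+R)$ is treated analogously by averaging over the lower half $(y_n+R,(y_n+y_{n+1})/2)$ of the gap above. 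Summing over $n$ both traces, the upper-half and lower-half contributions combine gap-by-gap: the TV contributions sum to $\TV^\ver(u;(\Lh_R)^c)$ and the $L^1$ contributions to $\tfrac1R\|u\|_{L^1((\Lh_R)^c)}\leq\tfrac1R\|u\|_{L^1(\Lv_R)}$ (using that $u$ vanishes on $(L_R)^c$). The remaining $\|u\|_{L^1(\Lv_R)}$ is bounded by horizontal slicing: for $y_0$ in the gaps, $u(\cdot,y_0)|_{\Lv_R}$ is supported in $\bigcup_m B_R(x_m)$ and vanishes at $x_m\pm R$, so \cref{thm:normInterpolationInterval} applied on $[x_m,x_m+R]$ and $[x_m-R,x_m]$ with singularity at $x_m$, followed by Cauchy--Schwarz in $y_0$, gives $\|u\|_{L^1(\Lv_R)}\leq\sqrt{\TV^\hor(u)\TVd{\Lv}^\hor(u;\Lv_R)}$.

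For the first inequality I decompose each bottom-left quarter of a fundamental cell into the four sets $A,B,C,D$ of the figure: $A$ is a corner region at $(x_m,y_n)$, $B,C$ are vertical and horizontal strip continuations beyond $A$, and $D$ is where $u$ vanishes. On $B$ (respectively $C$), strip-parallel slicing produces an endpoint $x_m+R$ (respectively $y_n+R$) lying in $(L_R)^c$ where $u=0$; slice-wise \cref{thm:normInterpolationInterval} with singularity at $x_m$ (respectively $y_n$), followed by Cauchy--Schwarz, yields $\|u\|_{L^1(B)}\leq\sqrt{\TV(u)\TVd{\Lv}^\hor(u;\Lv_{2R})}$ and analogously for $C$. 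Combining with the contribution of the corner $A$ produces the constant $3$ in the bound. The main obstacle is the corner region $A$: on its innermost $R\times R$ sub-square neither slicing provides a vanishing endpoint for $u$, so the inner corner must be controlled by a pointwise estimate such as $|u(x,y)|\leq\TV^\hor(u(\cdot,y);[x,x_m+R])+|u(x_m+R,y)|$, with the first term bounded by Cauchy--Schwarz against the $|\D_x u|$-measure and the second controlled via the trace bound (second inequality) applied to $u(\cdot,y_n\pm R)$; the enlarged neighborhoods $\Lh_{2R}$ and $\Lv_{2R}$ on the right-hand side of the first inequality provide the slack needed to absorb these corner contributions.
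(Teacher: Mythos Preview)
Your approach to the second and third inequalities is correct and differs from the paper's in a pleasant way: instead of invoking \cref{lem:line_from_area_general} to select a single good slice $\hat y$, you average the one-dimensional pointwise bound $|u(x,y_n-R)|\le|u(x,y_0)|+\TV^\ver(u(x,\cdot);[y_0,y_n-R])$ over $y_0$ in the half-gap, then control the resulting remainder $\|u\|_{L^1((\Lh_R)^c)}=\|u\|_{L^1((\Lh_R)^c\cap\Lv_R)}$ by horizontal slicing and \cref{thm:normInterpolationInterval}. This is more elementary and avoids the auxiliary lemma. (A minor expository slip: you pass to $\|u\|_{L^1(\Lv_R)}$ and then argue only ``for $y_0$ in the gaps''; you should simply stay with $\|u\|_{L^1((\Lh_R)^c\cap\Lv_R)}$, where the vanishing of $u(x_m\pm R,y_0)$ is guaranteed.)

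The first inequality, however, has a genuine gap at the corner square $A=[x_m,x_m+R]\times[y_n,y_n+R]$. Your pointwise estimate $|u(x,y)|\le\TV^\hor(u(\cdot,y);[x,x_m+R])+|u(x_m+R,y)|$ produces, after integration, a contribution $R\int_{y_n}^{y_n+R}|u(x_m+R,y)|\wrt y$. Summing over all corners and invoking the \emph{third} trace inequality (not the second, as you write) gives
\[
R\sum_m\bigl(\|u(x_m+R,\cdot)\|_1+\|u(x_m-R,\cdot)\|_1\bigr)\le R\,\TV^\hor(u;(\Lv_R)^c)+\sqrt{\TV^\ver(u)\,\TVd{\Lh}^\ver(u;\Lh_R)}.
\]
The square-root term is fine, but $R\,\TV^\hor(u;(\Lv_R)^c)$ cannot be absorbed into $\sqrt{\TV(u)}\bigl(\sqrt{\TVd{\Lh}^\ver(u;\Lh_{2R})}+\sqrt{\TVd{\Lv}^\hor(u;\Lv_{2R})}\bigr)$: take $u(x,y)=f(x)g(y)$ with $g$ a bump of width $2\epsilon$ at $y_n$ and $f$ supported well away from $\Lv$; then the target RHS is of order $\epsilon\sqrt{\|f\|_1(\|f\|_1+\epsilon\|f'\|_1)}$ while the extra term is $2R\epsilon\|f'\|_1$, which dominates once $\|f'\|_1/\|f\|_1$ is large. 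The enlargement from $R$ to $2R$ does not help here, since the offending horizontal variation lives entirely outside $\Lv_{2R}$.

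The paper closes this gap by a different mechanism: it never pushes to the boundary $x=x_m+R$ at the bad heights $y\in[y_n,y_n+R]$. Instead it uses \cref{lem:line_from_area_general} to choose a single $\hat y\in(y_n+R,y_n+2R)$ with
\[
\int_{x_m}^{x_m+R}|u(x,\hat y)|\wrt x\le\tfrac1R\sqrt{\TV^\hor(u;D)\,\TVd{\Lv}^\hor(u;D)},\qquad D=[x_m,x_m+R]\times[y_n+R,y_n+2R],
\]
and then handles the corner via $|u(x,y)|\le|u(x,y)-u(x,\hat y)|+|u(x,\hat y)|$. The first piece vanishes at $y=\hat y$, so \cref{thm:normInterpolationInterval} applies vertically with singularity at $y_n$; the second piece was just bounded. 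The set $D$, lying in the annulus between $\Lh_R$ and $\Lh_{2R}$, is precisely why the RHS of the first inequality carries $\Lv_{2R}$ rather than $\Lv_R$. This ``anchor slice just above the corner'' is the missing idea in your argument.
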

\begin{proof}
For $m\in \{1,\ldots,M\}$ and $n \in \{1,\ldots,N\}$
abbreviate $x_{m+\frac12}$ and $y_{n+\frac12}$ to be the midpoints of $[x_m,x_{m+1}]$ and $[y_n,y_{n+1}]$, respectively.
We derive the claimed estimates for the restriction of $u$ onto the bottom left quarter
\[
[x_{m},x_{m+\frac12}] \times [y_n,y_{n+\frac12}]
\]
of $[x_m,x_{m+1}]\times[y_n,y_{n+1}]$, the estimate on the other quarters follows analogously.
From those local estimates, the estimate on the whole domain $\Omega$ follows
using the Cauchy--Schwarz inequality $\sum_{j}\sqrt{a_j}\sqrt{b_j}\leq\sqrt{\sum_{j}a_j}\sqrt{\sum_jb_j}$ for any sequences $a_j,b_j$.

In the below estimates we will frequently consider restrictions of $u$ to horizontal or vertical lines,
which of course has to be understood in the sense of BV-traces.
However, except for $u(\cdot,y_n\pm R)$ and $u(x_m\pm R,\cdot)$ it will not matter from which side of the lines the BV-trace is taken.

First note that by \cref{lem:line_from_area_general} (for $I=1$ and $\varphi_1(y)=\int_{x_m}^{x_m+R} |u(x,y)| \wrt x$) we can choose some $\hat{y} \in ]y_n+R ,y_n + 2R[$ such that%
\begin{align*}
\int_{x_m}^{x_m+R} |u(x,\hat y)| \wrt x
&\leq \frac{1}{R}\int_{y_n+R}^{y_n + 2R}\int_{x_m}^{x_m+R} |u(x,y)| \wrt (x,y)\\
&\leq\frac{1}{R}\int_{y_n+R}^{y_n + 2R}\sqrt{\TV^\hor(u(\cdot,y);[x_m,x_m+R])\TVd{\{x_m\}}^\hor(u(\cdot,y);[x_m,x_m+R])}\wrt y\\
&\leq\frac{1}{R}\sqrt{ \TVp^\hor(u;D)\TVd{\Lv}^\hor(u;D) },
\end{align*}
where the last inequality is H\"older's inequality, and we abbreviated $D=[x_m,x_m+R]\times[y_n+R,y_n+2R]$, see \cref{fig:grid} right.
We further set
\begin{equation*}
A=[x_m,x_m+R]\times[y_n,\hat y],\qquad
B=[x_m,x_m+R]\times[\hat y,y_{n+\frac12}],\qquad
C=[x_m+R,x_{m+\frac12}]\times[y_n,y_n+R]
\end{equation*}
and estimate
\begin{equation*}
\int_{x_m}^{x_{m+\frac12}} \int_{y_n}^{y_{n+\frac12}} |u|\wrt(x,y)
= \int_{A}|u|\wrt(x,y)
+ \int_{B} |u|\wrt(x,y)
+ \int_{C} |u|\wrt(x,y).
\end{equation*}
Applying \cref{thm:normInterpolationInterval}, the last two terms on the right-hand side can be estimated as
\begin{align*}
\int_{B} |u| \wrt(x,y)
& \leq \int_{\hat{y}}^{y_{n+\frac12}} \sqrt{ \TV^\hor(u(\cdot,y);[x_m,x_m+R]) \TVd{\{x_m\}}^\hor(u(\cdot,y);[x_m,x_m+R])}\wrt y \\
& \leq \sqrt{ \TVp^\hor(u;B) \TVd{\Lv}^\hor(u;B)},\\
\int_{C} |u| \wrt(x,y)
& \leq \int_{x_m+R}^{x_{m+\frac12}}  \sqrt{ \TV^\ver(u(x,\cdot);[y_n,y_n+R]) \TVd{\{y_n\}}^\ver(u(x,\cdot);[y_n,y_n+R])} \wrt x  \\
& \leq \sqrt{\TVp^\ver(u;C)\TVd{\Lh}^\ver(u;C)},
\end{align*}
where, again, in each case the second inequality is H\"older's inequality.
The first term can be estimated, again applying \cref{thm:normInterpolationInterval}, as
\begin{align*}
\int_{A} |u|\wrt(x,y)
& \leq \int_{x_m}^{x_m+R}  \int_{y_n}^{\hat{y}}  |u(x,y) - u(x,\hat{y})|\wrt (x,y) + \dist(y_n,\hat{y}) \int_{x_m}^{x_m+R}   |u(x,\hat{y})| \wrt x \\
&  \leq \int_{x_m}^{x_m+R}  \sqrt{ \TV^\ver(u(x,\cdot);[y_n,\hat{y}])\TVd{\{y_n\}}^\ver(u(x,\cdot);[y_n,\hat{y}]) }\wrt x\\
&\hspace*{10ex}+\frac{\dist(y_n,\hat{y})}{R}\sqrt{ \TVp^\hor(u;D)\TVd{\Lh}^\hor(u;D) }\\
& \leq \sqrt{ \TVp^\ver(u;A)\TVd{\Lh}^\ver(u;A) }+2\sqrt{ \TVp^\hor(u;D)\TVd{\Lh}^\hor(u;D) }.
\end{align*}
Summarizing we get
\begin{align*}
\int_{x_m}^{x_{m+\frac12}} \int_{y_n}^{y_{n+\frac12}} |u|\wrt(x,y)
&\leq\sqrt{ \TVp(u;A)\TVd{\Lh}^\ver(u;A) }+\sqrt{ \TVp(u;B) \TVd{\Lv}^\hor(u;B)}\\
&\quad+\sqrt{\TVp(u;C)\TVd{\Lh}^\ver(u;C)}+2\sqrt{ \TVp(u;D)\TVd{\Lh}^\ver(u;D) }\\
&\leq\sqrt{\TVp(u;A\cup C\cup B)}\left(\sqrt{\TVd{\Lh}^\ver(u;A\cup C)}+3\sqrt{\TVd{\Lv}^\hor(u;B\cup D)}\right),
\end{align*}
which proves the first estimate.
For the second estimate (the third follows analogously) we calculate
\begin{multline*}
\int_{x_m}^{x_{m+\frac12}}|u(x,y_n+R)|\wrt x
\leq\int_{x_m}^{x_m+R}|u(x,\hat y)|\wrt x
+\int_{x_m}^{x_m+R}|u(x,y_n+R)-u(x,\hat y)|\wrt x\\
\leq\frac{1}{R}\sqrt{ \TVp^\hor(u;D)\TVd{\Lv}^\hor(u;D) }
+\TV^\ver(u;D).
\qedhere
\end{multline*}
\end{proof}

\subsection{Tikhonov regularization in Banach spaces}
In this section we briefly recall two results for Tikhonov regularization in Banach spaces (see for instance \cite{HollerSchlueterWirth2022,BurgerOsher2004}).
We consider the inverse problem $Ku=f$ with an operator $K:X\to Y$ between Banach spaces, and, consistent with the particular choice of $K$ in the previous section, assume that $X$ is reflexive and that $K$ is linear and bounded (thus weak-to-weak sequentially continuous).

The corresponding Tikhonov regularization is to approximate the solution to the inverse problem by the minimizer of following minimization problem,
consisting of a fidelity $F_f$ for given measurement $f$ and a regularization $G$,
\begin{equation*}
\min_{u\in X}J_\alpha^f(u),
\qquad
J_\alpha^f(u)=\tfrac1\alpha F_f(Ku)+G(u),
\end{equation*}
where $F_f$ and $G$ are proper and convex with $F_f(f)=0$ and $F_f>0$ else, and where $\alpha\geq0$ is a regularization parameter.
For $\alpha=0$ we interpret this as constraint $F_f(Ku)=0$, thus $Ku=f$.

We let $f^\dagger$ be the noise-free measurement and $u^\dagger$ the correct solution to the inverse problem $Ku^\dagger=f^\dagger$.
The results concern the so-called Bregman distance
\begin{equation*}
\BregmanDistance{G}p(u,u^\dagger)=G(u)-(G(u^\dagger)+\langle p,u-u^\dagger\rangle)
\end{equation*}
of the convex functional $G$ with respect to $p\in\partial G(u^\dagger)$.

\begin{thm}[Vanishing Bregman distance for noiseless reconstruction]\label{thm:ZeroBregman}
Let $u^\dagger\in X$ satisfy the source condition $-K^\dual w^\dagger\in\partial G(u^\dagger)$ for some $w^\dagger\in Y^\dual$.
Then any minimizer $u$ of $J_0^{f^\dagger}$ satisfies
\begin{equation*}
\BregmanDistance{G}{-K^\dual w^\dagger}(u,u^\dagger)=0.
\end{equation*}
\end{thm}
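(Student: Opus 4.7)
The plan is to exploit minimality of $u$ together with the source condition, and show the two terms in the Bregman distance cancel exactly.

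First, I would note that $u^\dagger$ is feasible for the constrained problem $J_0^{f^\dagger}$, since $Ku^\dagger=f^\dagger$ and thus $F_{f^\dagger}(Ku^\dagger)=0$. Hence for the minimizer $u$ of $J_0^{f^\dagger}$ we also have $F_{f^\dagger}(Ku)=0$, which forces $Ku=f^\dagger=Ku^\dagger$ (using that $F_{f^\dagger}$ vanishes only at $f^\dagger$). Moreover, minimality together with feasibility of $u^\dagger$ yields the inequality $G(u)\leq G(u^\dagger)$.

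Second, I would plug the source element $-K^\dual w^\dagger\in\partial G(u^\dagger)$ directly into the definition of the Bregman distance and use the duality pairing between $K$ and $K^\dual$:
\begin{align*}
\BregmanDistance{G}{-K^\dual w^\dagger}(u,u^\dagger)
&=G(u)-G(u^\dagger)-\langle -K^\dual w^\dagger,u-u^\dagger\rangle\\
&=G(u)-G(u^\dagger)+\langle w^\dagger,Ku-Ku^\dagger\rangle\\
&=G(u)-G(u^\dagger),
\end{align*}
where in the last step we used $Ku=Ku^\dagger$ established above. Combining this identity with $G(u)\leq G(u^\dagger)$ gives $\BregmanDistance{G}{-K^\dual w^\dagger}(u,u^\dagger)\leq 0$.

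Finally, the reverse inequality is automatic: by the very definition of the subdifferential and convexity of $G$, Bregman distances based on a genuine subgradient are always nonnegative, i.e. $\BregmanDistance{G}{-K^\dual w^\dagger}(u,u^\dagger)\geq 0$. Together with the previous step this forces equality to $0$. There is really no hard step here; the only point requiring a little care is ensuring that $Ku=f^\dagger$ is indeed forced by the constraint, which follows from the stated assumption that $F_{f^\dagger}$ is positive away from $f^\dagger$ and the convention that $\alpha=0$ encodes the hard constraint $F_{f^\dagger}(Ku)=0$.
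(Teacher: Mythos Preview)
Your proof is correct and follows essentially the same approach as the paper: expand the Bregman distance, use $K^\dual$-$K$ duality together with $Ku=Ku^\dagger=f^\dagger$ to reduce it to $G(u)-G(u^\dagger)$, bound this above by zero via minimality of $u$, and conclude by nonnegativity of the Bregman distance. The only cosmetic difference is that the paper writes $G(u)-G(u^\dagger)=J_0^{f^\dagger}(u)-J_0^{f^\dagger}(u^\dagger)\leq0$ in one chain, whereas you separate the feasibility and minimality observations beforehand.
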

\begin{proof}
One readily calculates
\begin{multline*}
\BregmanDistance{G}{-K^\dual w^\dagger}(u,u^\dagger)
=G(u)-G(u^\dagger)-\langle-K^\dual w^\dagger,u-u^\dagger\rangle\\
=G(u)-G(u^\dagger)+\langle w^\dagger,Ku-Ku^\dagger\rangle
=G(u)-G(u^\dagger)
=J_0^{f^\dagger}(u)-J_0^{f^\dagger}(u^\dagger)
\leq0
\end{multline*}
so that the nonnegativity of the Bregman distance implies all above terms to vanish.
\end{proof}

Below, for a functional $F:Y\to\R$ we indicate its convex conjugate by $F^\dual :Y^\dual \to\R$, $F^\dual (w)=\sup_{f\in Y}\langle w,f\rangle-F(f)$.

\begin{rem}[Interpretation of the source condition]
Note that the source condition $-K^\dual w^\dagger\in\partial G(u^\dagger)$ is one of the two necessary and sufficient primal-dual optimality conditions for minimizing $J_0^{f^\dagger}$.
The other one is $Ku^\dagger\in\partial(\tfrac10F_{f^\dagger})^\dual(w^\dagger)=\partial\iota_{\{f^\dagger\}}^\dual(w^\dagger)=\{f^\dagger\}$, thus automatically satisfied.
Thus, if strong duality holds, the source condition implies that $u^\dagger$ minimizes $J_0^{f^\dagger}$, and $w^\dagger$ certifies this.
\end{rem}

Now let $f^\delta$ be a noisy measurement with $F_{f^\delta}(f^\dagger)\leq\delta$, which is how we quantify the noise strength.
In that case the Bregman distance can be estimated by the following.

\begin{thm}[Bregman distance estimate for noisy reconstruction]\label{thm:BregmanEstimate}
Let $u^\dagger\in X$ satisfy the source condition $-K^\dual w^\dagger\in\partial G(u^\dagger)$ for some $w^\dagger\in Y^\dual $.
Then a minimizer $u_\alpha^\delta$ of $J_\alpha^{f^\delta}$ satisfies
\begin{align*}
\BregmanDistance{G}{-K^\dual w^\dagger}(u_\alpha^\delta,u^\dagger)
&\leq\left(3\delta+F_{f^\delta}^\dual (2\alpha w^\dagger)+F_{f^\delta}^\dual (-2\alpha w^\dagger)\right)/(2\alpha),\\
F_{f^\delta}(Ku_\alpha^\delta)
&\leq\left(3\delta+F_{f^\delta}^\dual (2\alpha w^\dagger)+F_{f^\delta}^\dual (-2\alpha w^\dagger)\right),\\
\langle K^\dual w,u_\alpha^\delta-u^\dagger\rangle
&\leq\left(4\delta+F_{f^\delta}^\dual (2\alpha w^\dagger)+F_{f^\delta}^\dual (-2\alpha w^\dagger)+F_{f^\delta}^\dual (2\alpha w)+F_{f^\delta}^\dual (-2\alpha w)\right)/(2\alpha)
\text{ for all }w\in Y^\dual .
\end{align*}
\end{thm}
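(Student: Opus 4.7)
The plan is to start from the minimality of $u_\alpha^\delta$ and repeatedly apply the Fenchel--Young inequality for the fidelity $F_{f^\delta}$ with its convex conjugate $F_{f^\delta}^\dual$, choosing the scaling factor $2\alpha$ so that the resulting fidelity terms can be absorbed into the left-hand side.

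First, from $J_\alpha^{f^\delta}(u_\alpha^\delta)\leq J_\alpha^{f^\delta}(u^\dagger)$ and the noise bound $F_{f^\delta}(f^\dagger)\leq\delta$, I obtain
\begin{equation*}
\tfrac1\alpha F_{f^\delta}(Ku_\alpha^\delta)+G(u_\alpha^\delta)-G(u^\dagger)\leq \tfrac\delta\alpha.
\end{equation*}
Adding and subtracting $\langle -K^\dual w^\dagger,u_\alpha^\delta-u^\dagger\rangle$ on the left introduces the Bregman distance, and transferring the resulting linear term to the measurement side via the adjoint yields
\begin{equation*}
\tfrac1\alpha F_{f^\delta}(Ku_\alpha^\delta)+\BregmanDistance{G}{-K^\dual w^\dagger}(u_\alpha^\delta,u^\dagger)\leq \tfrac\delta\alpha+\langle w^\dagger,Ku_\alpha^\delta-f^\dagger\rangle.
\end{equation*}

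Next, I split $Ku_\alpha^\delta-f^\dagger=(Ku_\alpha^\delta-f^\delta)+(f^\delta-f^\dagger)$ and apply Fenchel--Young with factor $\pm 2\alpha w^\dagger$ to each term:
\begin{equation*}
\langle w^\dagger,Ku_\alpha^\delta-f^\delta\rangle\leq \tfrac1{2\alpha}\bigl(F_{f^\delta}(Ku_\alpha^\delta)+F_{f^\delta}^\dual(2\alpha w^\dagger)\bigr),
\end{equation*}
\begin{equation*}
\langle w^\dagger,f^\delta-f^\dagger\rangle\leq \tfrac1{2\alpha}\bigl(F_{f^\delta}(f^\dagger)+F_{f^\delta}^\dual(-2\alpha w^\dagger)\bigr)\leq \tfrac1{2\alpha}\bigl(\delta+F_{f^\delta}^\dual(-2\alpha w^\dagger)\bigr).
\end{equation*}
Substituting these two bounds back and collecting the factor $\tfrac1{2\alpha}F_{f^\delta}(Ku_\alpha^\delta)$ on the left gives
\begin{equation*}
\tfrac1{2\alpha} F_{f^\delta}(Ku_\alpha^\delta)+\BregmanDistance{G}{-K^\dual w^\dagger}(u_\alpha^\delta,u^\dagger)\leq \tfrac1{2\alpha}\bigl(3\delta+F_{f^\delta}^\dual(2\alpha w^\dagger)+F_{f^\delta}^\dual(-2\alpha w^\dagger)\bigr),
\end{equation*}
from which the first estimate (dropping the fidelity term, which is nonnegative) and the second (dropping the Bregman distance, which is nonnegative) both follow.

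For the third estimate, I write $\langle K^\dual w,u_\alpha^\delta-u^\dagger\rangle=\langle w,Ku_\alpha^\delta-f^\delta\rangle+\langle w,f^\delta-f^\dagger\rangle$ and apply the same Fenchel--Young trick with $\pm 2\alpha w$ in place of $\pm 2\alpha w^\dagger$, producing
\begin{equation*}
\langle K^\dual w,u_\alpha^\delta-u^\dagger\rangle\leq \tfrac1{2\alpha}\bigl(F_{f^\delta}(Ku_\alpha^\delta)+\delta+F_{f^\delta}^\dual(2\alpha w)+F_{f^\delta}^\dual(-2\alpha w)\bigr).
\end{equation*}
Inserting the bound on $F_{f^\delta}(Ku_\alpha^\delta)$ from the second estimate yields the claim. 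No step is really a serious obstacle; the only thing to be careful about is choosing the Fenchel--Young scaling factor exactly equal to $2\alpha$, so that $F_{f^\delta}(Ku_\alpha^\delta)$ appears with coefficient $\tfrac1{2\alpha}$ and is dominated by the $\tfrac1\alpha F_{f^\delta}(Ku_\alpha^\delta)$ on the left-hand side, leaving a clean constant in front of the three conjugate and noise terms.
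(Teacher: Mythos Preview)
Your argument is essentially the paper's: minimality of $u_\alpha^\delta$, then Fenchel--Young with scaling $2\alpha$ to absorb half of the fidelity term, leaving exactly the stated bounds. The final combined inequality
\[
\tfrac1{2\alpha} F_{f^\delta}(Ku_\alpha^\delta)+\BregmanDistance{G}{-K^\dual w^\dagger}(u_\alpha^\delta,u^\dagger)\leq \tfrac1{2\alpha}\bigl(3\delta+F_{f^\delta}^\dual(2\alpha w^\dagger)+F_{f^\delta}^\dual(-2\alpha w^\dagger)\bigr)
\]
and the derivation of the third estimate from the second are correct and match the paper.

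One presentational slip: your two displayed intermediate inequalities, as written separately, are \emph{not} both consequences of Fenchel--Young. Applying Fenchel--Young to $v=Ku_\alpha^\delta$ and $v=f^\dagger$ gives
\[
\langle 2\alpha w^\dagger,Ku_\alpha^\delta\rangle\le F_{f^\delta}(Ku_\alpha^\delta)+F_{f^\delta}^\dual(2\alpha w^\dagger),
\qquad
\langle -2\alpha w^\dagger,f^\dagger\rangle\le F_{f^\delta}(f^\dagger)+F_{f^\delta}^\dual(-2\alpha w^\dagger),
\]
whereas your versions shift by $\pm\langle 2\alpha w^\dagger,f^\delta\rangle$, which need not have a fixed sign. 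The two discrepancies cancel exactly upon summation, so your combined estimate is correct; but the clean route (and the paper's) is to apply Fenchel--Young directly to $\langle 2\alpha w^\dagger,Ku_\alpha^\delta\rangle$ and $\langle -2\alpha w^\dagger,Ku^\dagger\rangle$ without passing through $f^\delta$.
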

\begin{proof}
We first identify the Bregman distance with the following term,
\begin{multline*}
\left[G(u_\alpha^\delta)+\langle K^\dual w^\dagger,u_\alpha^\delta\rangle-(\tfrac10F_{f^\dagger})^\dual (w^\dagger)\right]
-\left[G(u^\dagger)+\langle K^\dual w^\dagger,u^\dagger\rangle-(\tfrac10F_{f^\dagger})^\dual (w^\dagger)\right]\\
=G(u_\alpha^\delta)-G(u^\dagger)-\langle-K^\dual w^\dagger,u_\alpha^\delta-u^\dagger\rangle
=\BregmanDistance{G}{-K^\dual w^\dagger}(u_\alpha^\delta,u^\dagger).
\end{multline*}
Exploiting the optimality of $u_\alpha^\delta$, we furthermore have
\begin{equation*}
G(u_\alpha^\delta)+\tfrac1\alpha F_{f^\delta}(Ku_\alpha^\delta)
=J_\alpha^{f^\delta}(u_\alpha^\delta)
\leq J_\alpha^{f^\delta}(u^\dagger)
=G(u^\dagger)+\tfrac1\alpha F_{f^\delta}(f^\dagger)
\leq G(u^\dagger)+\tfrac\delta\alpha.
\end{equation*}
Fenchel's inequality now yields
\begin{equation*}
\langle K^\dual w,u_\alpha^\delta-u^\dagger\rangle
=\frac{\langle2\alpha w,Ku_\alpha^\delta\rangle+\langle-2\alpha w,Ku^\dagger\rangle}{2\alpha}
\leq\frac{F_{f^\delta}(Ku_\alpha^\delta)+F_{f^\delta}^\dual (2\alpha w)+F_{f^\delta}(Ku^\dagger)+F_{f^\delta}^\dual (-2\alpha w)}{2\alpha},
\end{equation*}
which already proves the third statement in case the second holds.
Using both previous inequalities we finally obtain
\begin{multline*}
\BregmanDistance{G}{-K^\dual w^\dagger}(u_\alpha^\delta,u^\dagger)
=\left[G(u_\alpha^\delta)+\langle K^\dual w^\dagger,u_\alpha^\delta\rangle-(\tfrac10F_{f^\dagger})^\dual (w^\dagger)\right]
-\left[G(u^\dagger)+\langle K^\dual w^\dagger,u^\dagger\rangle-(\tfrac10F_{f^\dagger})^\dual (w^\dagger)\right]\\
\leq\frac\delta\alpha-\frac1\alpha F_{f^\delta}(Ku_\alpha^\delta)+\langle w^\dagger,K(u_\alpha^\delta-u^\dagger)\rangle
\leq\frac1{2\alpha}\left(3\delta+F_{f^\delta}^\dual (2\alpha w^\dagger)+F_{f^\delta}^\dual (-2\alpha w^\dagger)\right)-\frac1{2\alpha}F_{f^\delta}(Ku_\alpha^\delta).
\qedhere
\end{multline*}
\end{proof}

\begin{rem}[Rates from estimates]\label{rem:rates}
It is not difficult to deduce that the property $F_{f^\delta}(z)>0$ unless $z=f^\delta$ implies differentiability of $F_{f^\delta}^\dual (\pm2\alpha w)$ in $\alpha=0$. Thus
\begin{equation*}
(F_{f^\delta}^\dual (2\alpha w)+F_{f^\delta}^\dual (-2\alpha w))/\alpha\to0
\end{equation*}
as $\alpha\to0$.
Choosing $\alpha$ as minimizer of $(\delta+F_{f^\delta}^\dual (2\alpha w^\dagger)+F_{f^\delta}^\dual (-2\alpha w^\dagger))/\alpha$ we therefore get a convergence rate.
In this article we will for simplicity consider $F_{f^\delta}(f)=\frac12\|f-f^\delta\|_Y^2$ so that $F_{f^\delta}^\dual (2\alpha w)+F_{f^\delta}^\dual (-2\alpha w)=2\|w\|_{Y^\dual }^2\alpha^2$.
Thus, by choosing $\alpha=\sqrt\delta$, \cref{thm:BregmanEstimate} provides the estimates
\begin{align*}
\BregmanDistance{G}{-K^\dual w^\dagger}(u_\alpha^\delta,u^\dagger)
&\leq C(\|w^\dagger\|_{Y^\dual })\sqrt\delta,\\
F_{f^\delta}(Ku_\alpha^\delta)
&\leq C(\|w^\dagger\|_{Y^\dual })\sqrt\delta,\\
\langle K^\dual w,u_\alpha^\delta-u^\dagger\rangle
&\leq (C(\|w^\dagger\|_{Y^\dual })+C(\|w\|_{Y^\dual }))\sqrt\delta,
\end{align*}
where $C(s)$ denotes a constant only depending on $s\in\R$.
Results for other fidelities follow analogously.
\end{rem}

\section{Exact recovery and convergence rates for reconstruction from noisy data}\label{sec:reconstructionError}

Recall that to solve the inverse problem $Ku=f$ we consider the minimization problem
\begin{equation} \label{eq:main_min_prob}\tag*{$P_\alpha(f)$}
\min_{u\in L^2(\Omega)}J_{\alpha}^{f}(u)
\qquad\text{with }
J_{\alpha}^{f}(u)=\TVp(u)+\tfrac1{2\alpha}|Ku-f|_2^2.
\end{equation}
Here $K:L^2(\Omega)\to Y$ is a bounded, linear operator (thus weak-to-weak sequentially continuous).%

We think of the truncated Fourier transform \eqref{eqn:truncatedFourier}, but the exact form of the operator does actually not matter in this section.
First we will prove that, given perfect, noise-free measurements $f^\dagger=Ku^\dagger$ of the ground truth $u^\dagger$,
then $u^\dagger$ can be exactly reconstructed via $P_0(f^\dagger)$ if it satisfies certain regularity conditions.
Afterwards we prove $L^1$-convergence rates for the approximation of $u^\dagger$ via $P_\alpha(f^\delta)$ in case of noisy data $f^\delta$.

\subsection{Identification of the support of $\D u^\dagger$}

We first show under which conditions the support of $\D u^\dagger$ can be reconstructed.
The existence of the corresponding dual variables for $K$ being the truncated Fourier transform
will be proved in \cref{thm:dualCertificatesI} of \cref{sec:dualCertificates} under just a minimum separation condition.
Based on \cref{ass:consistentGradient} we denote by $s_n^\hor,s_m^\ver\in\{-1,1\}$ the sign of $\D_yu^\dagger\mres\R/\Z\times\{y_n\}$ and of $\D_xu^\dagger\mres\{x_m\}\times\R/\Z$, respectively.

\begin{prop}[Exact recovery of gradient support]\label{thm:exactRecoverySupport}
Let $u^\dagger$ satisfy \cref{ass:consistentGradient} as well as the source condition that there exist $w^\hor,w^\ver\in Y^\dual $ with $K^\dual w^\hor=\D_yg^\hor$, $K^\dual w^\ver=\D_xg^\ver$
for some $g=(g^\ver,g^\hor)\in H(\dive;\Omega)\cap C(\Omega)^2$ satisfying
\begin{alignat*}{4}
g^\hor&=s_n^\hor&&\text{ on }\R/\Z\times\{y_n\}&&\text{ for all }n,&&\quad |g^\hor|<1\text{ on }\Omega\setminus \Lh,\\
g^\ver&=s_m^\ver&&\text{ on }\{x_m\}\times\R/\Z&&\text{ for all }m,&&\quad |g^\ver|<1\text{ on }\Omega\setminus \Lv.
\end{alignat*}
Then the solution $u$ to $P_0(f^\dagger)$ satisfies $|\D u|(\Omega\setminus L)=0$.
\end{prop}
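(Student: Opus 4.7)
The plan is to apply \cref{thm:ZeroBregman} with regularizer $G=\TVp$ and dual variable $w^\dagger := w^\hor+w^\ver \in Y^\dual$, so that $-K^\dual w^\dagger = -(\D_y g^\hor + \D_x g^\ver) = -\dive g$. First I would verify the source condition $-\dive g \in \partial\TVp(u^\dagger)$ via the subdifferential formula recalled in \cref{sec:notation}, using $\varphi = g$ itself as the witness vector field. Membership $g \in H(\dive;\Omega)$ and the bound $|g|_\infty \leq 1$ are immediate from the continuity of $g$ together with the strict inequalities $|g^\hor|<1$ off $\Lh$, $|g^\ver|<1$ off $\Lv$, and the prescribed values $\pm 1$ on the edge lines. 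The trace identity $T_{\D u^\dagger}g = 1$ holding $|\D u^\dagger|_1$-almost everywhere is where \cref{ass:consistentGradient} enters: $\D_x u^\dagger$ is concentrated on $\Lv$ with constant sign $s_m^\ver$ on each $\{x_m\}\times\R/\Z$, matching $g^\ver = s_m^\ver$ there, and analogously for $\D_y u^\dagger$ on $\Lh$.

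With the source condition verified, \cref{thm:ZeroBregman} gives $\BregmanDistance{\TVp}{-\dive g}(u,u^\dagger) = 0$ for any minimizer $u$ of $P_0(f^\dagger)$. Since $K(u-u^\dagger) = 0$, expanding the Bregman distance reduces to $\TVp(u) = \TVp(u^\dagger)$. Combining this with the Fenchel--Young equality $\TVp(u^\dagger) = \langle -\dive g, u^\dagger\rangle$ (valid because $-\dive g \in \partial\TVp(u^\dagger)$ and $\TVp^\dual(-\dive g) = 0$, the latter thanks to $|g|_\infty \leq 1$), and noting $\langle -\dive g, u\rangle = \langle -\dive g, u^\dagger\rangle$, the integration-by-parts pairing between $g \in H(\dive)$ and $u \in \BV(\Omega)$ under periodic boundary conditions yields
\begin{equation*}
\TVp(u) = -\int_\Omega u\,\dive g\wrt(x,y) = \int_\Omega g^\ver \wrt\D_x u + \int_\Omega g^\hor \wrt\D_y u.
\end{equation*}

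Writing the polar decompositions $\D_x u = \sigma_x|\D_x u|$ and $\D_y u = \sigma_y|\D_y u|$ with $|\sigma_x| = |\sigma_y| = 1$ on the respective supports, and recalling $\TVp(u) = |\D_x u|(\Omega) + |\D_y u|(\Omega)$, the identity rearranges to
\begin{equation*}
\int_\Omega (1 - g^\ver\sigma_x)\wrt|\D_x u| + \int_\Omega (1 - g^\hor\sigma_y)\wrt|\D_y u| = 0.
\end{equation*}
Both integrands are pointwise nonnegative because $|g^\ver|, |g^\hor| \leq 1$, so each vanishes almost everywhere with respect to the corresponding measure. Hence $|g^\ver| = 1$ holds $|\D_x u|$-a.e.\ and $|g^\hor| = 1$ holds $|\D_y u|$-a.e. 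Combined with the strict inequalities $|g^\ver|<1$ on $\Omega\setminus\Lv$ and $|g^\hor|<1$ on $\Omega\setminus\Lh$, this forces $|\D_x u|(\Omega\setminus\Lv) = |\D_y u|(\Omega\setminus\Lh) = 0$, and therefore $|\D u|(\Omega\setminus L) = 0$.

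The main obstacle I expect is the rigorous verification of $T_{\D u^\dagger}g = 1$ in the first step, since the anisotropic normal trace is prescribed by two different component functions $g^\ver, g^\hor$ on two intersecting families of lines. One has to unwind the defining identity $\int T_{\D u^\dagger}g\,\psi\wrt|\D u^\dagger|_1 = -\int u^\dagger\dive(g\psi)\wrt(x,y)$, observe that the finitely many corner points in $\Lh\cap\Lv$ carry no $|\D u^\dagger|_1$-mass (since $\D u^\dagger$ is absolutely continuous with respect to $\hd^1\mres L$), and then use \cref{ass:consistentGradient} to match the sign of $\D_x u^\dagger$ on each $\{x_m\}\times\R/\Z$ with $g^\ver = s_m^\ver$ (and analogously for the horizontal direction). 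Once that trace computation is pinned down, the remainder of the argument is a clean strict-inequality/Fenchel analysis.
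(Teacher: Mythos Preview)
Your proposal is correct and follows essentially the same route as the paper: verify $-\dive g\in\partial\TVp(u^\dagger)$, invoke \cref{thm:ZeroBregman} to obtain $\TVp(u)=\langle g,\D u\rangle$, and then use the strict inequality $|g|_\infty<1$ off $L$ to force $|\D u|(\Omega\setminus L)=0$. The paper simply asserts the subdifferential membership as obvious and compresses your steps 4--6 into the single line $0=\TVp(u)-\TVp(u^\dagger)-\langle g,\D u-\D u^\dagger\rangle=\TVp(u)-\langle g,\D u\rangle$ (using $\TVp(u^\dagger)=\langle g,\D u^\dagger\rangle$ directly rather than passing through $Ku=Ku^\dagger$ twice), but the substance is identical; your worry about the trace identity $T_{\D u^\dagger}g=1$ is not a genuine obstacle since $g$ is continuous and $\D u^\dagger$ is a finite sum of weighted $\hd^1$-measures on the lines, where the prescribed signs match by \cref{ass:consistentGradient}.
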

\begin{proof}
Obviously $-K^\dual (w^\hor+w^\ver)=-\dive g\in\partial\TVp(u^\dagger)$.
By \cref{thm:ZeroBregman} we have
\begin{equation*}
0=\BregmanDistance{\TVp}{-\dive g}(u,u^\dagger)
=\TVp(u)-\TVp(u^\dagger)-\langle g,\D u-\D u^\dagger\rangle
=\TVp(u)-\langle g,\D u\rangle.
\end{equation*}
However, if $|Du|(\Omega\setminus L)$ were nonzero, then $|g|_\infty\leq1$ everywhere and $|g|_\infty< 1$ outside $L$ would imply
\begin{equation*}
\TVp(u)-\langle g,\D u\rangle
=|\D_xu|(\Omega\setminus L^\ver)+|\D_xu|(L^\ver)-\langle g^\ver,\D_x u\rangle
+|\D_yu|(\Omega\setminus L^\hor)+|\D_yu|(L^\hor)-\langle g^\hor,\D_y u\rangle
>0,
\end{equation*}
a contradiction.
\end{proof}

\subsection{Identification of values of $u^\dagger$}
Under slightly stronger conditions also the values of $u^\dagger$ can be exactly reconstructed.
The existence of the corresponding dual variables will be proved in \cref{thm:dualCertificatesII} of \cref{sec:dualCertificates} under just a minimum separation condition.

\begin{prop}[Exact recovery]\label{thm:exactRecovery}
Let $u^\dagger$ satisfy \cref{ass:consistentGradient}.
Furthermore, in addition to the source condition from \cref{thm:exactRecoverySupport} assume that for any sign combination $s_{mn}\in\{-1,1\}$ there exists $w\in Y^\dual $ with
\begin{equation*}
s_{mn}\int_{x_m}^{x_{m+1}}\int_{y_n}^{y_{n+1}}K^\dual w(x,y)\wrt(x,y)>0
\end{equation*}
for all $m=1,\ldots,M$, $n=1,\ldots,N$, then $u^\dagger$ is the unique solution to $P_0(f^\dagger)$.
\end{prop}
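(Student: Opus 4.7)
The plan is to leverage the previous proposition to reduce the problem from an infinite-dimensional one to a finite-dimensional one, and then use the new source condition to resolve the remaining indeterminacy.

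First, let $u$ be any solution to $P_0(f^\dagger)$. By \cref{thm:exactRecoverySupport}, the hypotheses are satisfied so $|\D u|(\Omega\setminus L)=0$. Since the complement $\Omega\setminus L$ decomposes into the disjoint open rectangles $]x_m,x_{m+1}[\,\times\,]y_n,y_{n+1}[$, the distributional gradient of $u$ vanishes on each such cell, so $u$ is constant on each cell. Thus $u=\sum_{m,n}u_{mn}\chi_{[x_m,x_{m+1}[\,\times\,[y_n,y_{n+1}[}$ almost everywhere for some real coefficients $u_{mn}$. In particular, the difference $v=u-u^\dagger$ is also piecewise constant,
\begin{equation*}
v=\sum_{m,n}v_{mn}\chi_{[x_m,x_{m+1}[\,\times\,[y_n,y_{n+1}[},
\qquad v_{mn}=u_{mn}-u^\dagger_{mn},
\end{equation*}
and, since $u$ and $u^\dagger$ both satisfy the constraint $Ku=f^\dagger$, we have $Kv=0$.

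Next, I would argue by contradiction: suppose $v\not\equiv0$, that is, $v_{mn}\neq0$ for at least one pair $(m,n)$. Define $s_{mn}=\sgn(v_{mn})$ whenever $v_{mn}\neq0$, and choose $s_{mn}\in\{-1,+1\}$ arbitrarily otherwise. Applying the additional source condition to this sign pattern produces some $w\in Y^\dual$ with
\begin{equation*}
s_{mn}\int_{x_m}^{x_{m+1}}\!\int_{y_n}^{y_{n+1}}K^\dual w(x,y)\wrt(x,y)>0
\quad\text{for all }m=1,\ldots,M,\ n=1,\ldots,N.
\end{equation*}

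Finally, I would evaluate the pairing $\langle K^\dual w,v\rangle$ in two ways. On the one hand, $\langle K^\dual w,v\rangle=\langle w,Kv\rangle=0$. On the other hand, using the piecewise constant structure of $v$,
\begin{equation*}
\langle K^\dual w,v\rangle
=\sum_{m,n}v_{mn}\int_{x_m}^{x_{m+1}}\!\int_{y_n}^{y_{n+1}}K^\dual w\wrt(x,y)
=\sum_{m,n}|v_{mn}|\left(s_{mn}\int_{x_m}^{x_{m+1}}\!\int_{y_n}^{y_{n+1}}K^\dual w\wrt(x,y)\right).
\end{equation*}
Every summand is nonnegative by the construction of $w$ and the choice of $s_{mn}$, and at least one summand (corresponding to an index with $v_{mn}\neq0$) is strictly positive. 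This contradicts the vanishing of the pairing, so $v\equiv0$ and $u=u^\dagger$.

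There is no significant obstacle here: the heavy lifting was done in \cref{thm:exactRecoverySupport}, which localizes the gradient of every minimizer to $L$; once the problem is reduced to the finite-dimensional one of pinning down the cell values $u_{mn}$, the new source condition is tailor-made to rule out any nonzero combination $(v_{mn})$ in the kernel of $K$ via a one-line sign argument.
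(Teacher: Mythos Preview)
Your proof is correct and follows essentially the same approach as the paper: invoke \cref{thm:exactRecoverySupport} to reduce any minimizer $u$ to a piecewise constant function on the grid, then test $u-u^\dagger$ against $K^\dual w$ for the dual variable associated to the sign pattern of the cell differences. Your treatment is in fact slightly more careful than the paper's in explicitly handling the cells where $v_{mn}=0$ by assigning an arbitrary sign there.
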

\begin{proof}
By \cref{thm:exactRecoverySupport} there exist $(u_{mn})_{m,n} \in \R^{M\times N}$ such that
\begin{equation*}
u = \sum_{m=1}^M \sum_{n=1}^N u_{mn} \chi_{[x_m,x_{m+1}]\times[y_n,y_{n+1}]}.
\end{equation*}
Now let $s_{mn}$ be the sign of $u_{mn}-u_{mn}^\dagger$ and $w\in Y^\dual $ the associated dual variable.
Then we can calculate
\begin{multline*}
0
=\langle w,K(u-u^\dagger)\rangle
=\langle K^\dual w,u-u^\dagger\rangle
=\sum_{m=1}^M\sum_{n=1}^N(u_{mn}-u_{mn}^\dagger)\int_{x_m}^{x_{m+1}}\int_{y_n}^{y_{n+1}}K^\dual w(x,y)\wrt(x,y)\\
=\sum_{m=1}^M\sum_{n=1}^N(u_{mn}-u_{mn}^\dagger)s_{mn}\left|\int_{x_m}^{x_{m+1}}\int_{y_n}^{y_{n+1}}K^\dual w(x,y)\wrt(x,y)\right|
>0
\end{multline*}
unless $u=u^\dagger$.
\end{proof}

\begin{rem}[Alternative approaches]
For $K$ being the truncated Fourier series, as we consider in this article,
one can obtain the exactness of reconstructed function values by alternative means. For instance, this can be done using results for the de Prony method \cite{Plonka2013prony_spline} or, already given the correct support of $Du$, by arguing with injectivity of the band-limited Fourier transform on this type of functions.
\end{rem}

\notinclude{
\begin{prop}[Exact recovery]\label{thm:exactRecovery}
Suppose that $\Delta \geq 2/\fcut$, that $\fcut \geq 128$ and that
for each $k=1,\ldots,m$, there exists $\nu^v_k \in \R$ such that for each $l$ either
\[ \D u^\dagger \mres {\Lv_{k,l}} = (\nu_k^v,0)^T \wrt \hd^1 {\Lv_{k,l}}
\]
or 
\[ \D u^\dagger \mres {\Lv_{k,l}} = 0.
\]
Likewise, assume that for each $l=1,\ldots,n$, there exists $\nu^h_k \in \R$ such that for each $k$ either
\[ \D u^\dagger \mres {\Lh_{k,l}} = (0,\nu_l^h)^T \wrt \hd^1 {\Lh_{k,l}}
\]
or 
\[ \D u^\dagger \mres {\Lh_{k,l}} = 0.
\]
Then, $u^\dagger$ is the unique solution to $P_0(f^\dagger)$. 
\begin{proof}
Take $u$ to be any solution of $P_0(f^\dagger)$. 
The proof comprises two steps: First we show that $\supp(\D u) \subset L$, then we use this to conclude that $u = u^\dagger$.

Define $K:\R/\Z \rightarrow \R$ \todo{notation clash: $K$ has double usage; also, do we need to require even $\fcut$?} to be the square of a  Fejér kernel given as as
\[ K(t) = \left( \frac{\sin( (\fcut/2 + 1) \pi t)}{(\fcut/2 + 1)\sin(\pi t)}\right)^4
\]
for $t \in \R/\Z$. It follows that $K$ is of the form
\[ K(t) = \sum_{k=-\fcut}^{\fcut} C_k \e^{2\pi i k t}
\]
Using this kernel and the conditions on $\Delta$ and $\fcut$, it was shown in \cite{candes} that we can find $q^v,q^h:\R/\Z \rightarrow \C$ with
\[ q^v(t) = \sum_{k=-\fcut}^{\fcut} c^v_k \e^{2\pi i k t}, \quad q^h(t) = \sum_{k=-\fcut}^{\fcut} c^h_k \e^{2\pi i k t}
\]
such that 
\[ q^v(x_k) = \sgn(\nu^v_k) \text{ for } k=1,\ldots,m, \quad |q^v(x)| < 1 \text{ for }x \notin\{ x_1,\ldots,x_m\}
\]
and
\[ q^h(y_l) = \sgn(\nu^h_l) \text{ for } l=1,\ldots,n, \quad |q^h(y)| < 1 \text{ for }y \notin\{ y_1,\ldots,y_m\}.
\]
Now defining $g:\Omega \rightarrow \R^2$ as $g(x,y) = (q^v(x),q^h(y))$, it follows that
\[ \dive g(x,y) = \partial_x q^v(x) + \partial _y q^h(y) = \sum_{k=-\fcut}^{\fcut} 2 \pi i  k c^v_k \e^{2\pi i k x} +  \sum_{k=-\fcut}^{\fcut} 2 \pi i kc^h_k \e^{2\pi i k y}
\]
such that $\dive g  \in \range (K^\dual )$. Consequently, we obtain
\[ 
0 = (\dive g,  u - u^\dagger) = \int_{\Omega} g \wrt \D(u- u^\dagger) =\int_{\Omega} g \wrt \D(u) - \TVp(u^\dagger) \leq  \int_{\Omega} g \wrt \D(u) - \TVp(u)
\]
This implies that $\TVp(u) \leq \int_{\Omega} g \wrt \D(u)$, and since $\||g|_\infty\|_\infty\leq 1$ and $|g|_\infty< 1$ outside $L$ it means that $\D u$ can be supported only in $L$.

Consequently, there exist $(u_{k,l})_{k,l} \in \R^{(n+1)\times (m+1)}$ such that
\[ u = \sum_{k=0}^n \sum_{l=0}^m u_{k,l} \chi_{k,l},
\]
and it remains to show that $u_{k,l} = u^\dagger_{k,l}$ for all $k,l$. This can be done either by arguing with results from the de Prony method, by testing with the integrated Fejer kernel, or by arguing with injectivity of the band-limited Fourier transform on this type of functions.  \todo{complete this}.
For instance, taking the $g$ constructed in \cref{sec:dualVariables} for $s_{kl}=\mathrm{sign}(u_{kl}-u_{kl}^\dagger)$,
which satisfies $g=K^\dual w$ for some $w$, we have
\begin{multline*}
0
=\langle w,K(u-u^\dagger)\rangle
=\langle g,u-u^\dagger\rangle
=\sum_{k,l}(u_{kl}-u_{kl}^\dagger)\int_{x_k}^{x_{k+1}}\int_{y_l}^{y_{l+1}}g(x,y)\wrt(x,y)\\
=\sum_{k,l}(u_{kl}-u_{kl}^\dagger)s_{kl}\left|\int_{x_k}^{x_{k+1}}\int_{y_l}^{y_{l+1}}g(x,y)\wrt(x,y)\right|
>0
\end{multline*}
unless $u=u^\dagger$.
\end{proof}
\end{prop}

Next, we want to obtain convergence rates. For this, given $R>0$ and $A \subset \Omega$, we denote by 
\[ A_R = \{ x \in \Omega \st \dist(x,A) \leq R \} \]
and use $A^c$ to denote the complement of $A$ in $\Omega$.

}%

\subsection{Convergence rates in the $L^1$-distance}

The correct identification of the support of $\D u^\dagger$ in the noise-free setting required the existence of dual variables
having unit absolute value on the ground truth discontinuity set $L$ and strictly smaller absolute value away from $L$.
In case of noisy data, in which $L$ will not be exactly recovered, we estimate the localization error of the discontinuity lines
using the $\dist(\cdot,L)$-weighted total variation.
This requires the dual variables to additionally diverge sufficiently fast from the value $\pm1$ on $L$,
which, as before, will be true under a minimium separation condition as shown in \cref{thm:dualCertificatesI} of \cref{sec:dualCertificates}.

\notinclude{
\begin{prop}[Bound of total variation by Bregman distance] \label{prop:tv_tvd_estimates}
 Assume that $g = (g^\ver,g^\hor)$ is such that $- \dive g \in \subdif \TVp(u^\dagger)$ and 
\begin{align*}
|g^\ver(x,y)| \leq 1 - \kappa \min(R,\dist(x,\Lv)^2), \\
|g^\hor(x,y)| \leq  1 - \kappa \min(R,\dist(y,\Lh)^2)
\end{align*}
for $0<R < \Delta/2$  and all $(x,y) \in \Omega$. Then
\begin{align*}
\TV^\hor|_{(\Lv_R)^c}(u^\delta)+\TV^\ver|_{(\Lh_R)^c}(u^\delta)
&\leq\BregmanDistance{\TVp}{-\dive g}(u^\delta,u^\dagger)/(\kappa R^2), \\
\TVd{\Lv}^\hor|_{\Lv_R}(u^\delta)+\TVd{\Lh}^v|_{\Lh_R}(u^\delta)
&\leq\BregmanDistance{\TVp}{-\dive g}(u^\delta,u^\dagger)/\kappa.
\end{align*}
\begin{proof}
\begin{multline*}
\kappa R^2 | \D_x u^\delta |((\Lv_R)^c) + \int_{\Lv_R} \kappa\dist(x,\Lv)^2|\D_x u^\delta| + \kappa R^2 | \D_y u^\delta |((\Lh_R)^c) + \int_{\Lh_R} \kappa\dist(y,\Lh)^2|\D_y u^\delta|  \\
 \leq \int_{\Omega} 1 - |g^\ver| \wrt |\D_x u^\delta| + \int_{\Omega} 1 - |g^\hor| \wrt |\D_y u^\delta|  \leq \int_{\Omega} |\D u^\delta|_1 - \langle g,\D u^\delta \rangle   \\
= \int_{\Omega} |\D u^\delta| _1 - |\D u^\dagger |_1 - \langle g,\D u^\delta - \D u^\dagger \rangle = \BregmanDistance{\TVp}	{-\dive g}(u,u^\dagger)
\end{multline*}
\end{proof}
\end{prop}
}%

\begin{prop}[Total variation estimates]\label{thm:TVestimates}
Let $u^\dagger$ satisfy \cref{ass:consistentGradient}.
Furthermore, in addition to the source condition from \cref{thm:exactRecoverySupport} assume that $g^\hor,g^\ver$ satisfy
\begin{align*}
|g^\ver(x,y)| &\leq 1 - \kappa \min(R,\dist(x,\Lv))^2, \\
|g^\hor(x,y)| &\leq  1 - \kappa \min(R,\dist(y,\Lh))^2
\end{align*}
for some $\kappa>0$, $0<R < \Delta/4$  and all $(x,y) \in \Omega$. Then any solution $u^\delta$ to $P_{\sqrt\delta}(f^\delta)$ satisfies
\begin{align*}
\TV^\hor(u^\delta;(\Lv_R)^c)+\TV^\ver(u^\delta;(\Lh_R)^c)
&\leq C(\|w^\hor+w^\ver\|_{Y^\dual })\sqrt\delta/(\kappa R^2), \\
\TVd{\Lv}^\hor(u^\delta;\Lv_R)+\TVd{\Lh}^\ver(u^\delta;\Lh_R)
&\leq C(\|w^\hor+w^\ver\|_{Y^\dual })\sqrt\delta/\kappa
\end{align*}
for a constant $C(\|w^\hor+w^\ver\|_{Y^\dual })$ only depending on $\|w^\hor+w^\ver\|_{Y^\dual }$.
\end{prop}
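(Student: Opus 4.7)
The plan is to apply the machinery of Tikhonov regularization in Banach spaces from \cref{thm:BregmanEstimate} with $G = \TVp$, $F_{f^\delta}(f) = \tfrac12|f-f^\delta|_2^2$, and the given source element $w^\dagger = w^\hor + w^\ver$ so that $-K^\dual w^\dagger = -\dive g \in \partial \TVp(u^\dagger)$. With $\alpha = \sqrt\delta$ and quadratic fidelity, the discussion in \cref{rem:rates} yields
\begin{equation*}
\BregmanDistance{\TVp}{-\dive g}(u^\delta,u^\dagger) \leq C(\|w^\hor+w^\ver\|_{Y^\dual})\sqrt\delta.
\end{equation*}
The remainder of the proof then reduces to a pointwise lower bound on this Bregman distance in terms of the four weighted/unweighted TV quantities appearing in the claim.

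The key step is to rewrite the Bregman distance using the subdifferential characterization. Since $-\dive g \in \partial \TVp(u^\dagger)$ and the subdifferential condition forces $\langle g, \D u^\dagger\rangle = \TVp(u^\dagger)$, one obtains the simplification
\begin{equation*}
\BregmanDistance{\TVp}{-\dive g}(u^\delta,u^\dagger)
= \TVp(u^\delta) - \langle g, \D u^\delta\rangle
= |\D_x u^\delta|(\Omega) - \langle g^\ver, \D_x u^\delta\rangle + |\D_y u^\delta|(\Omega) - \langle g^\hor, \D_y u^\delta\rangle.
\end{equation*}
Using $|g^\ver|, |g^\hor| \leq 1$ pointwise, each term can be lower bounded by integrating $(1-|g^\ver|)$ against $|\D_x u^\delta|$ and $(1-|g^\hor|)$ against $|\D_y u^\delta|$.

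Next I would plug in the quantitative bounds on $g^\ver$ and $g^\hor$, which give
\begin{equation*}
\BregmanDistance{\TVp}{-\dive g}(u^\delta,u^\dagger)
\geq \kappa \int_\Omega \min(R,\dist(x,\Lv))^2 \wrt|\D_x u^\delta|
 + \kappa \int_\Omega \min(R,\dist(y,\Lh))^2 \wrt|\D_y u^\delta|.
\end{equation*}
Splitting each of the two integrals according to whether the integration point lies in the $R$-neighborhood of the corresponding edge set or in its complement handles the two regimes of the minimum: on $(\Lv_R)^c$ and $(\Lh_R)^c$ the weight equals $\kappa R^2$, producing the unweighted total variations, while on $\Lv_R$ and $\Lh_R$ the weight equals $\kappa\,\dist(\cdot,\Lv)^2$ respectively $\kappa\,\dist(\cdot,\Lh)^2$, producing exactly $\TVd{\Lv}^\hor$ and $\TVd{\Lh}^\ver$. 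Combining with the Bregman bound from \cref{thm:BregmanEstimate} gives both inequalities.

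There is no real obstacle here; the proof is a direct calculation once one notices that the Bregman distance collapses to $\TVp(u^\delta) - \langle g, \D u^\delta\rangle$. The only subtle point is to verify that $\langle -\dive g, u^\dagger - u^\delta\rangle = \langle g, \D(u^\dagger - u^\delta)\rangle$ makes sense as a duality pairing for $g \in H(\dive;\Omega) \cap C(\Omega)^2$ against BV functions, which follows from the normal trace characterization of $\partial \TVp$ recalled in \cref{sec:notation}.
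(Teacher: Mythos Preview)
Your proposal is correct and follows essentially the same approach as the paper: both reduce the Bregman distance $\BregmanDistance{\TVp}{-\dive g}(u^\delta,u^\dagger)$ to $\TVp(u^\delta)-\langle g,\D u^\delta\rangle$, bound it below by $\int(1-|g^\ver|)\,\d|\D_xu^\delta|+\int(1-|g^\hor|)\,\d|\D_yu^\delta|$, insert the quantitative decay bounds on $g^\ver,g^\hor$, split into the regions $\Lv_R,(\Lv_R)^c$ and $\Lh_R,(\Lh_R)^c$, and then invoke \cref{rem:rates} for the $\sqrt\delta$ bound. The only cosmetic difference is that the paper chains the inequalities from the weighted TV side up to the Bregman distance, whereas you run the argument in the opposite direction.
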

\begin{proof}
Recall from the proof of \cref{thm:exactRecoverySupport} that $-K^\dual (w^\hor+w^\ver)=-\dive g\in\partial\TVp(u^\dagger)$.
Now we can calculate
\begin{align*}
&\kappa R^2\TV^\hor(u^\delta;(\Lv_R)^c)
+\kappa\TVd{\Lv}^\hor(u^\delta;\Lv_R)
+\kappa R^2\TV^\ver(u^\delta;(\Lh_R)^c)
+\kappa\TVd{\Lh}^\ver(u^\delta;\Lh_R)\\
&=\kappa R^2 | \D_x u^\delta |((\Lv_R)^c) + \int_{\Lv_R} \kappa\dist(x,\Lv)^2|\D_x u^\delta| + \kappa R^2 | \D_y u^\delta |((\Lh_R)^c) + \int_{\Lh_R} \kappa\dist(y,\Lh)^2|\D_y u^\delta|  \\
&\leq \int_{\Omega} 1 - |g^\ver| \wrt |\D_x u^\delta| + \int_{\Omega} 1 - |g^\hor| \wrt |\D_y u^\delta| \\
&\leq |\D u^\delta|_1(\Omega) - \langle g,\D u^\delta \rangle  %
= \TVp(u^\delta) - \TVp(u^\dagger) - \langle g,\D u^\delta - \D u^\dagger \rangle%
= \BregmanDistance{\TVp}{-\dive g}(u^\delta,u^\dagger).
\end{align*}
The result now follows from \cref{rem:rates}.
\end{proof}

Note that, in case the assumptions in the above proposition on $g$ hold for some $R>0$, they also hold for any $0<\tilde{R}<R$,
in particular for $\tilde R=\frac R2$.
\notinclude{
As direct consequence of \cref{lem:line_from_area_general}, we can obtain the following separation of our domain.
\begin{prop}  \label{prop:tv_pointwise_from_global}
Take $u \in \BV(\Omega)$ and $0<R<\Delta/2$. 
Then, for each $\Lvm$, there exist points $x^+_{k}$ and $x^-_{k}$ such that  such that $R/2 < x^+_{k}-x_k , x_k - x^-_{k} < 3R/4$ and
\begin{align*}
 \int_{[0,1] \setminus P\Lh_{R/2}} \wrt |\D_y u |(x_m^+,y)  & \leq
\frac{12}{R}\int_{x_k + R/2}^{x_k + 3R/4} \int _{[0,1] \setminus P\Lh_{R/2}} \wrt |\D_y u |(x,y) \\
  \int_{P\Lh_{R/2}}  \wrt |\D_y u |(x_m^+,y) & \leq \frac{12}{R}\int_{x_k + R/2}^{x_k + 3R/4} \int_{P\Lh_{R/2}}  \wrt |\D_y u |(x,y) \\
 \int_{P\Lh_{R/2}} \dist(y,P\Lh)^2 \wrt |\D_y u |(x_m^+,y) & \leq  \frac{12}{R}\int_{x_k + R/2}^{x_k + 3R/4} \int_{P\Lh_{R/2}}\dist(y,P\Lh)^2 \wrt|\D_y u |(x,y) 
\end{align*}
and
\begin{align*}
 \int_{[0,1] \setminus P\Lh_{R/2}} \wrt |\D_y u |(x_m^-,y)  & \leq
\frac{12}{R}\int_{x_k - 3R/4}^{x_k - R/2} \int _{[0,1] \setminus P\Lh_{R/2}} \wrt |\D_y u |(x,y) \\
  \int_{P\Lh_{R/2}}  \wrt |\D_y u |(x_m^-,y) & \leq\frac{12}{R} \int_{x_k - 3R/4}^{x_k - R/2} \int_{P\Lh_{R/2}}  \wrt |\D_y u |(x,y) \\
 \int_{P\Lh_{R/2}} \dist(y,P\Lh)^2 \wrt |\D_y u |(x_m^-,y) & \leq\frac{12}{R} \int_{x_k - 3R/4}^{x_k - R/2} \int_{P\Lh_{R/2}}\dist(y,P\Lh)^2 \wrt|\D_y u |(x,y) 
\end{align*}
Likewise, for each $\Lhn$, there exist points $y^+_{l}$ and $y^-_{l}$ such that  such that $R/2 < y^+_{l}-y_l = y_l - y^-_{l} < 3R/4$ with the same properties, but the $x$ and $y$ dimension swapped.
\todo{more details on $y$, reference to disintegration theorem, fix notation}
\begin{proof}
This immediately follows from \cref{lem:line_from_area_general}, observing that the involved measures satisfy the required assumptions \todo{detail}.
\end{proof}
\end{prop}
}%
To derive an $L^1$-convergence rate we will decompose the reconstruction error as
\begin{equation*}
\|u^\delta-u^\dagger\|_1\leq\|u^\delta-\tilde u^\delta\|_1+\|\tilde u^\delta-u^\dagger\|,
\end{equation*}
where $\tilde u^\delta$ is an approximation of $u^\delta$ whose (potentially spurious) discontinuities near $L$ are all relocated to lie exactly on the ground truth discontinuity lines $L$.
To this end we introduce points
\begin{equation*}
x_m^-\in[x_m-\tilde R,x_m-\tfrac{\tilde R}2],\quad
x_m^+\in[x_m+\tfrac{\tilde R}2,x_m+\tilde R],\quad
y_n^-\in[y_n-\tilde R,y_n-\tfrac{\tilde R}2],\quad
y_n^+\in[y_n+\tfrac{\tilde R}2,y_n+\tilde R]
\end{equation*}
for all $m=1,\ldots,M$ and $n=1,\ldots,N$.
These define horizontal and vertical strips
\begin{equation*}
\Lh_\pm=\bigcup_{n=1}^N\R/\Z\times[y_n^-,y_n^+],\quad
\Lv_\pm=\bigcup_{m=1}^M[x_m^-,x_m^+]\times\R/\Z,
\end{equation*}
and we then set $\tilde u^\delta=u^\delta$ on $(\Lh_\pm\cup\Lv_\pm)^c$ and extend it constantly in direction normal to the boundary of $(\Lh_\pm\cup\Lv_\pm)^c$,
\begin{equation*}
\tilde u^\delta(x,y)
=u^\delta(\pi_{[x_m^+,x_{m+1}^-]\times[y_n^+,y_{n+1}^-]}(x,y))
\qquad\text{if }(x,y)\in[x_m,x_{m+1}]\times[y_n,y_{n+1}],
\end{equation*}
where $\pi_A:\Omega\to A$ denotes the orthogonal projection onto the closed convex set $A\subset\Omega$.
We choose the points $x_m^\pm$ and $y_n^\pm$ such that,
abbreviating $P\Lv_{\tilde R/2}=B_{\tilde R/2}(\{x_1,\ldots,x_M\})$ and $P\Lh_{\tilde R/2}=B_{\tilde R/2}(\{y_1,\ldots,y_N\})$, it holds
\begin{align*}
\int_{V_i}\wrt|\D_yu^\delta|(x_m^+,y)
&\leq\frac4{\tilde R}\int_{x_m+\frac{\tilde R}2}^{x_m+\tilde R}\int_{V_i}\wrt|\D_yu^\delta|(x,y)
&&\text{for }V_1=\R/\Z\text{ and }V_2=(\R/\Z)\setminus P\Lh_{\tilde R/2},\\
\int_{V_i}\wrt|\D_yu^\delta|(x_m^-,y)
&\leq\frac4{\tilde R}\int_{x_m-\tilde R}^{x_m-\frac{\tilde R}2}\int_{V_i}\wrt|\D_yu^\delta|(x,y)
&&\text{for }V_1=\R/\Z\text{ and }V_2=(\R/\Z)\setminus P\Lh_{\tilde R/2},\\
\int_{V_i}\wrt|\D_xu^\delta|(x,y_n^+)
&\leq\frac4{\tilde R}\int_{y_n+\frac{\tilde R}2}^{y_n+\tilde R}\int_{V_i}\wrt|\D_xu^\delta|(x,y)
&&\text{for }V_1=\R/\Z\text{ and }V_2=(\R/\Z)\setminus P\Lv_{\tilde R/2},\\
\int_{V_i}\wrt|\D_xu^\delta|(x,y_n^-)
&\leq\frac4{\tilde R}\int_{y_n-\tilde R}^{y_n-\frac{\tilde R}2}\int_{V_i}\wrt|\D_xu^\delta|(x,y)
&&\text{for }V_1=\R/\Z\text{ and }V_2=(\R/\Z)\setminus P\Lv_{\tilde R/2}
\end{align*}
for $m=1,\ldots,M$ and $n=1,\ldots,N$, which is always possible by \cref{lem:line_from_area_general}
(for $I=2$ and $\varphi_i(x)=\int_{V_i}\wrt|\D_yu^\delta|(x,y)$ or $\varphi_i(y)=\int_{V_i}\wrt|\D_xu^\delta|(x,y)$).
The resulting modification $\tilde u^\delta$ satisfies the same bounds as the original $u^\delta$.

\begin{lem}[Total variation estimates for modification]\label{thm:TVMod}
Under the conditions of \cref{thm:TVestimates} we have
\begin{equation*}
\TV^\hor(\tilde u^\delta)\leq5\TV^\hor(u^\delta),\qquad
\TV^\ver(\tilde u^\delta)\leq5\TV^\ver(u^\delta),\qquad
\TV(\tilde u^\delta)\leq5\TV(u^\delta)
\end{equation*}
as well as
\begin{align*}
\TV^\hor(\tilde u^\delta;(\Lv_R)^c)+\TV^\ver(\tilde u^\delta;(\Lh_R)^c)
&\leq C(\|w^\hor+w^\ver\|_{Y^\dual })\sqrt\delta/(\kappa R^2), \\
\TVd{\Lv}^\hor(\tilde u^\delta;\Lv_R)+\TVd{\Lh}^\ver(\tilde u^\delta;\Lh_R)
&\leq C(\|w^\hor+w^\ver\|_{Y^\dual })\sqrt\delta/\kappa
\end{align*}
for a constant $C(\|w^\hor+w^\ver\|_{Y^\dual })$ only depending on $\|w^\hor+w^\ver\|_{Y^\dual }$.
\end{lem}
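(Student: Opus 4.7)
The plan is to exploit the explicit piecewise structure of $\tilde u^\delta$ coming from the projection construction. I decompose $\Omega$ along the refined grid $\{x_m^\pm\}\times\{y_n^\pm\}$ into four types of regions: cells $C_{mn}=(x_m^+,x_{m+1}^-)\times(y_n^+,y_{n+1}^-)$ on which $\tilde u^\delta=u^\delta$; horizontal strips $H_{mn}=(x_m^+,x_{m+1}^-)\times(y_n^-,y_n^+)$ on which $\tilde u^\delta(x,y)$ equals $u^\delta(x,y_n^+)$ or $u^\delta(x,y_n^-)$ depending on the sign of $y-y_n$; vertical strips $V_{mn}=(x_m^-,x_m^+)\times(y_n^+,y_{n+1}^-)$ defined symmetrically; and corner squares $K_{mn}=(x_m^-,x_m^+)\times(y_n^-,y_n^+)$ on which $\tilde u^\delta$ is constant on each of the four quadrants cut out by $\{x=x_m\}$ and $\{y=y_n\}$. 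Since $\tilde u^\delta$ is defined via projection onto the closed convex cell, its values match on the shared boundaries of these regions, so the only new discontinuities lie on $\{x=x_m\}\cap(V_{mn}\cup K_{mn})$ and $\{y=y_n\}\cap(H_{mn}\cup K_{mn})$. This makes $\D\tilde u^\delta$ tractable region by region.

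For the first three estimates I compute $\TV^\hor(\tilde u^\delta)$ (the $\TV^\ver$ case is symmetric and $\TV$ is their sum). The cell and vertical-strip contributions together are at most $\TV^\hor(u^\delta)$: on cells $\D_x\tilde u^\delta=\D_xu^\delta$, while on $V_{mn}$ the only $\D_x$-mass is the $\{x=x_m\}$-jump, which the 1D BV bound $|u^\delta(x_m^+,y)-u^\delta(x_m^-,y)|\leq|\D_xu^\delta(\cdot,y)|([x_m^-,x_m^+])$ controls by $|\D_xu^\delta|(V_{mn})$ after integration in $y$. On $H_{mn}$ the explicit form of $\tilde u^\delta$ gives
\[
|\D_x\tilde u^\delta|(H_{mn})=(y_n^+-y_n)|\D_xu^\delta(\cdot,y_n^+)|([x_m^+,x_{m+1}^-])+(y_n-y_n^-)|\D_xu^\delta(\cdot,y_n^-)|([x_m^+,x_{m+1}^-]),
\]
and on $K_{mn}$ the two $\{x=x_m\}$-jumps yield the analogous expression with $[x_m^+,x_{m+1}^-]$ replaced by $[x_m^-,x_m^+]$. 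Summing over $m$, these two families of intervals join to cover all of $\R/\Z$, so the combined $H+K$ contribution at height $y_n$ is of the form $(y_n^\pm-y_n)|\D_xu^\delta(\cdot,y_n^\pm)|(\R/\Z)$. Using $y_n^\pm-y_n\leq\tilde R$ together with the cross-section inequalities selecting $y_n^\pm$ (with $V_1=\R/\Z$) turns this into $4|\D_xu^\delta|$ over a width-$\tilde R/2$ strip; the strips are pairwise disjoint across $n$, so summation gives $\leq 4\TV^\hor(u^\delta)$ and hence the factor $5$.

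For the last two estimates I repeat the computation restricted to $(\Lv_R)^c$ or $\Lv_R$, but invoke the cross-section inequalities with $V_2=(\R/\Z)\setminus P\Lv_{\tilde R/2}$ instead of $V_1$. Taking $\tilde R\leq R$, the regions $V_{mn}\cup K_{mn}$ lie inside $\Lv_R$, so they contribute nothing to $\TV^\hor(\tilde u^\delta;(\Lv_R)^c)$. The cell part is bounded by $\TV^\hor(u^\delta;(\Lv_R)^c)$, which \cref{thm:TVestimates} controls, while the horizontal-strip part, after restricting the $x$-integration to $(\Lv_R)^c\subset(\Lv_{\tilde R/2})^c$ and applying the $V_2$-cross-section inequality, reduces to a constant times $\TV^\hor(u^\delta;(\Lv_{\tilde R/2})^c)$; \cref{thm:TVestimates} applied with $\tilde R/2$ in place of $R$ estimates this by $C\sqrt\delta/(\kappa\tilde R^2)$, and the choice $\tilde R\sim R$ yields the asserted bound. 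For the weighted estimate on $\Lv_R$ the key simplification is that vertical-strip and corner jumps sit on $\{x=x_m\}\subset\Lv$, where the weight $\dist(\cdot,\Lv)^2$ vanishes, so their contribution is zero; the cell part is $\leq\TVd{\Lv}^\hor(u^\delta;\Lv_R)$, and the horizontal-strip part is handled by pulling out the pointwise bound $\dist(x,\Lv)^2\leq R^2$ and then applying the $V_2$-cross-section inequality in combination with the preceding localized $\TV^\hor$-bound. The main obstacle is purely bookkeeping: regrouping the $\D_x\tilde u^\delta$-mass from $H_{mn}$ and $K_{mn}$ so that the partial intervals $[x_m^+,x_{m+1}^-]$ and $[x_m^-,x_m^+]$ combine to the full $\R/\Z$ to which \cref{lem:line_from_area_general} applies, while simultaneously tracking the localization to $(\Lv_R)^c$ or $\Lv_R$ and the vanishing of $\dist(\cdot,\Lv)$ on the vertical lines.
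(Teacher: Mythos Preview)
Your proposal is correct and follows essentially the same approach as the paper. Your four-piece decomposition into cells $C_{mn}$, horizontal strips $H_{mn}$, vertical strips $V_{mn}$, and corners $K_{mn}$ is just a refinement of the paper's two-piece split into $(\Lh_\pm)^c$ (your cells plus vertical strips) and $\Lh_\pm$ (your horizontal strips plus corners); the subsequent use of the cross-section inequalities for $y_n^\pm$ with $V_1$ and $V_2$, the observation that jumps on $\{x=x_m\}$ carry zero weight in $\TVd{\Lv}^\hor$, and the final appeal to \cref{thm:TVestimates} (once with radius $R$, once with the smaller radius $\tilde R/2$) are identical to the paper's argument.
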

\begin{proof}
By the choice of the $x_m^\pm$ and $y_n^\pm$ we have
\begin{multline*}
\TV^\hor(\tilde u^\delta)
=\TV^\hor(\tilde u^\delta;(\Lh_\pm)^c)
+\TV^\hor(\tilde u^\delta;\Lh_\pm)
\leq\TV^\hor(u^\delta;(\Lh_\pm)^c)\\
+\sum_{n=1}^N\sum_{s\in\{+,-\}}\tilde R\int_{\R/\Z}\wrt|\D_xu^\delta|(x,y_n^s)
\leq\TV^\hor(u^\delta;(\Lh_\pm)^c)
+4\TV^\hor(u^\delta;\Lh_{\tilde R})
\leq5\TV^\hor(u^\delta).
\end{multline*}
The estimate for $\TV^\ver$ and $\TVp$ follows analogously.
By a similar argument,
\begin{align*}
\TVd{\Lv}^\hor(\tilde u^\delta;\Lv_R)
&=\TVd{\Lv}^\hor(\tilde u^\delta;\Lv_{2\tilde R}\setminus\Lh_\pm)
+\TVd{\Lv}^\hor(\tilde u^\delta;\Lv_{2\tilde R}\cap\Lh_\pm)\\
&\leq\TVd{\Lv}^\hor(u^\delta;\Lv_{2\tilde R}\setminus\Lh_\pm)
+\sum_{n=1}^N\sum_{s\in\{+,-\}}\tilde R\int_{(\R/\Z)\setminus P\Lv_{\tilde R/2}}(2\tilde R)^2\wrt|\D_xu^\delta|(x,y_n^s)\\
&\leq\TVd{\Lv}^\hor(u^\delta;\Lv_{2\tilde R}\setminus\Lh_\pm)
+2(2\tilde R)^2\TV^\hor(u^\delta;\Lh_{\tilde R}\setminus\Lv_{\tilde R/2})\\
&\leq\TVd{\Lv}^\hor(u^\delta;\Lv_{2\tilde R})
+2(2\tilde R)^2\TV^\hor(u^\delta;(\Lv_{\tilde R/2})^c)
\end{align*}
so that \cref{thm:TVestimates} thus implies (recall $2\tilde R=R$)
\begin{equation*}
\TVd{\Lv}^\hor(\tilde u^\delta;\Lv_R)
\leq C(\|w^\hor+w^\ver\|_{Y^\dual })\sqrt\delta/\kappa.
\end{equation*}
The argument for $\TVd{\Lh}^\ver$ is analogous.
Finally,
\begin{align*}
\TV^\hor(\tilde u^\delta;(\Lv_R)^c)
&=\TV^\hor(\tilde u^\delta;(\Lv_{2\tilde R})^c\setminus\Lh_\pm)
+\TV^\hor(\tilde u^\delta;(\Lv_{2\tilde R})^c\cap\Lh_\pm)\\
&\leq\TV^\hor(u^\delta;(\Lv_{2\tilde R})^c\setminus\Lh_\pm)
+\sum_{n=1}^N\sum_{s\in\{+,-\}}\tilde R\int_{(\R/\Z)\setminus P\Lv_{\tilde R/2}}\wrt|\D_xu^\delta|(x,y_n^s)\\
&\leq\TV^\hor(u^\delta;(\Lv_{2\tilde R})^c\setminus\Lh_\pm)
+4\TV^\hor(u^\delta;\Lh_{\tilde R}\setminus\Lv_{\tilde R/2})\\
&\leq\TV^\hor(u^\delta;(\Lv_{2\tilde R})^c)
+4\TV^\hor(u^\delta;(\Lv_{\tilde R/2})^c)
\end{align*}
so that again \cref{thm:TVestimates} (this time using two different values for $R$) implies
\begin{equation*}
\TV^\hor(\tilde u^\delta;(\Lv_R)^c)
\leq C(\|w^\hor+w^\ver\|_{Y^\dual })\sqrt\delta/(\kappa R^2)
\end{equation*}
after potentially adapting the constant. The estimate for $\TV^\ver$ follows in the same way.
\end{proof}

\notinclude{
Now given any $u \in \BV(\Omega)$, we define $\tilde{u}:\Omega \rightarrow \R$ as follows:
For each square $]x_k ,x_{k+1}[ \times ]y_l  \times y_{l+1}[$ set
$\tilde{u}$ to $u$ on $]x_m^+ ,x^-_{k+1}[ \times ]y^+_l  \times y^-_{l+1}[$, extend it by the values of $u $ on the boundary of $]x_m^+ ,x^-_{k+1}[ \times ]y^+_l  \times y^-_{l+1}[$ in normal direction of the boundary and set it the value of $u$ on the corners of $]x_m^+ ,x^-_{k+1}[ \times ]y^+_l  \times y^-_{l+1}[$ in the reaming squares in the corners. \todo{improve, make more precise: how to define values on the corners, define $L_\pm$}

\begin{lem}\label{thm:normInterpolation}
\todo{up to constants that do not depend on $R$, but on the domain}
With $u \in \BV(\Omega)$ and $\tilde{u}$ defined from $u$ as above, we have
\[ \|u - \tilde{u}\|_1\leq  11\sqrt{\TV|_{L_R}(u)} \left( \sqrt{\TVd{\Lh}^h|_{\Lh_R}(u)} + \sqrt{\TVd{\Lv}^v|_{\Lv_R}(u) } \right)
\]
\begin{proof}
We set $\xi =  u - \tilde{u}$ and derive the claimed estimate on the set 
\[ ]x_{k},x_{k} + \Delta_k/2[ \times ]y_l,y_l + \Delta_l/2[ 
\] for some $k\in \{1,\ldots,m-1\}$ and $l \in \{1,\ldots,n-1\}$, the rest follows analogously.

First note that, by \cref{lem:line_from_area_general}, there exists $\hat{y} \in ]y_l^+ ,y_l + R[$ such that both
\[
\int_{x_k}^{x_m^+} \dist(x,\Lv)^2 \wrt |\D_x \xi ^\delta | (x,\hat{y}) \leq \frac{8}{R}\int_{x_k}^{x_m^+} \int_{y_l^+}^{y_l + R}\dist(x,\Lv)^2 \wrt |\D_x u | (x,y),
\]
and 
\[
\int_{x_k}^{x_m^+}  \wrt |\D_x \xi ^\delta | (x,\hat{y}) \leq  \frac{8}{R}\int_{x_k}^{x_m^+} \int_{y_l^+}^{y_l + R}  \wrt |\D_x u | (x,y).
\]

Now we estimate
\[ \int_{x_k}^{x_k + \Delta _k/2} \int_{y_l}^{y_l + \Delta_l/2} |\xi| \leq  
\int_{x_m^+} ^{x_{k} + \Delta_k/2} \int_{y_l}^{y_l^+} |\xi| + \int_{\hat{y}}^{y_l + \Delta_l/2} \int_{x_k}^{x_m^+ } |\xi| + 
\int_{x_k}^{x_m^+}  \int_{y_l}^{\hat{y}} |\xi|.
\]
The first two terms on the right-hand side can be estimated as
\begin{align*}
\int_{x_m^+} ^{x_{k} + \Delta_k/2} \int_{y_l}^{y_l^+} |\xi| 
 & \leq \int_{x_m^+} ^{x_{k} + \Delta_k/2}  \sqrt{ \TV^\ver|_{]y_l,y_l^+[}(\xi(x,\cdot)) \TVd{y_l}^v|_{]y_l,y_l^+[}(\xi(x,\cdot))} \wrt x  \\
 & \leq \sqrt{\TV^\ver|_{[x_m^+,x_k + \Delta_k/2] \times [y_l,y_l^+]}(\xi)\TVd{y_l}^v|_{[x_m^+,x_k + \Delta_k/2] \times [y_l,y_l^+]}(\xi)} \\
&  \leq \sqrt{\TV|_{[x_m^+,x_k + \Delta_k/2] \times [y_l,y_l^+]}(\xi)}\sqrt{\TVd{y_l}^v|_{[x_m^+,x_k + \Delta_k/2] \times [y_l,y_l^+]}(\xi)}
\end{align*}
using H\"older's inequality and
\begin{align*}
\int_{\hat{y}}^{y_l + \Delta_l/2} \int_{x_k}^{x_m^+ } |\xi| 
& \leq \int_{\hat{y}}^{y_l + \Delta_l/2} \sqrt{ \TV^\hor|_{[x_k,x_m^+]}(\xi(\cdot,y)) \TVd{x_k}^h|_{[x_k,x_m^+]}(\xi(\cdot,y))}\wrt y \\
& \leq \sqrt{ \TV^\hor|_{[x_k,x_m^+] \times [\hat{y},y_k + \Delta_k/2]}(\xi) \TVd{x_k}^h|_{[x_k,x_m^+]\times [\hat{y},y_k + \Delta_k/2]}(\xi)}  \\
& \leq \sqrt{ \TV|_{[x_k,x_m^+] \times [\hat{y},y_k + \Delta_k/2]}(\xi)  } \sqrt{\TVd{x_k}^h|_{[x_k,x_m^+]\times [\hat{y},y_k + \Delta_k/2]}(\xi)}.
\end{align*}
The last term can be estimated as
\begin{align*}
\int_{x_k}^{x_m^+}  \int_{y_l}^{\hat{y}} |\xi|
& \leq \int_{x_k}^{x_m^+}  \int_{y_l}^{\hat{y}}  |\xi(x,y) - \xi(x,\hat{y})|\wrt (x,y) + |y_l - \hat{y}| \int_{x_k}^{x_m^+}   |\xi(x,\hat{y})| \wrt x \\
&  \leq \int_{x_k}^{x_m^+}  \sqrt{ \TV^\ver| _{[y_l,\hat{y}]} (\xi(x,\cdot))\TVd{y_l}^v| _{[y_l,\hat{y}]} (\xi(x,\cdot)) }\wrt x  
+ R \sqrt{ \TV^\hor|_{[x_k,x_m^+ ]}(\xi(\cdot,\hat{y}))\TVd{x_k}^h|_{[x_k,x_m^+ ]}(\xi(\cdot,\hat{y})) } \\
& \leq \sqrt{ \TV^\ver| _{[x_k,x_m^+ ] \times [y_l,\hat{y}]} (\xi)\TVd{y_l}^v| _{[x_k,x_m^+ ] \times [y_l,\hat{y}]} (\xi) } + R \sqrt{ \TV^\hor|_{[x_k,x_m^+ ]}(\xi(\cdot,\hat{y}))} \sqrt{\TVd{x_k}^h|_{[x_k,x_m^+ ]}(\xi(\cdot,\hat{y})) }\\
& \leq \sqrt{\TV|_{[x_k,x_m^+ ] \times [y_l,\hat{y}]} (\xi)} \sqrt{\TVd{y_l}^v| _{[x_k,x_m^+ ] \times [y_l,\hat{y}]} (\xi)} \\
&  + 8\sqrt{\TV|_{[x_k,x_m^+ ] \times [y_l,y_l + R]} (\xi)} \sqrt{\TVd{x_k}^h| _{[x_k,x_m^+ ] \times [y_l,y_l + R]} (\xi)}.
\end{align*}
\end{proof}

\end{lem}
}%
The difference between $u^\delta$ and $\tilde u^\delta$ can now be estimated applying \cref{thm:normInterpolation} to $u^\delta-\tilde u^\delta$.

\begin{prop}[Bound on modification]\label{thm:differenceToModification}
Under the conditions of \cref{thm:TVestimates} we have
\begin{equation*}
\|u^\delta-\tilde u^\delta\|_1
\leq\sqrt{\TVp(u^\dagger)+\sqrt\delta}\,C(\|w^\hor+w^\ver\|_{Y^\dual })\delta^{1/4}/\sqrt{\kappa}
\end{equation*}
for a constant $C(\|w^\hor+w^\ver\|_{Y^\dual })$ only depending on $\|w^\hor+w^\ver\|_{Y^\dual }$.
\end{prop}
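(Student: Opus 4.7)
The plan is to apply the norm interpolation \cref{thm:normInterpolation} directly to the difference $v:=u^\delta-\tilde u^\delta$ and then substitute the total variation estimates from \cref{thm:TVMod}. The first thing to observe is that $v$ vanishes outside $L_\pm=\Lh_\pm\cup\Lv_\pm$ by construction of $\tilde u^\delta$, and $L_\pm\subset L_{\tilde R}$ since the strips have half-width at most $\tilde R$. Taking $\tilde R=R/2<\Delta/8<\Delta/4$, \cref{thm:normInterpolation} (applied with the radius $\tilde R$, so that $2\tilde R=R$) yields
\begin{equation*}
\|v\|_1\leq 3\sqrt{\TVp(v)}\left(\sqrt{\TVd{\Lh}^\ver(v;\Lh_R)}+\sqrt{\TVd{\Lv}^\hor(v;\Lv_R)}\right).
\end{equation*}

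Next I would bound each of the three factors on the right. For the total variation prefactor, the triangle inequality together with \cref{thm:TVMod} gives $\TVp(v)\leq\TVp(u^\delta)+\TVp(\tilde u^\delta)\leq 6\TVp(u^\delta)$. Optimality of $u^\delta$ against the competitor $u^\dagger$ in $P_{\sqrt\delta}(f^\delta)$ yields
\begin{equation*}
\TVp(u^\delta)\leq\TVp(u^\delta)+\tfrac1{2\sqrt\delta}|Ku^\delta-f^\delta|_2^2\leq\TVp(u^\dagger)+\tfrac{\delta}{\sqrt\delta}=\TVp(u^\dagger)+\sqrt\delta,
\end{equation*}
using $\tfrac12|Ku^\dagger-f^\delta|_2^2\leq\delta$. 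Hence $\TVp(v)\lesssim\TVp(u^\dagger)+\sqrt\delta$.

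For the weighted TV terms, again the triangle inequality combined with the second pair of bounds in \cref{thm:TVMod} (applied to both $u^\delta$ and $\tilde u^\delta$) gives
\begin{equation*}
\TVd{\Lh}^\ver(v;\Lh_R)+\TVd{\Lv}^\hor(v;\Lv_R)\leq C(\|w^\hor+w^\ver\|_{Y^\dual})\sqrt\delta/\kappa.
\end{equation*}
Plugging these bounds into the norm interpolation estimate produces
\begin{equation*}
\|u^\delta-\tilde u^\delta\|_1\lesssim\sqrt{\TVp(u^\dagger)+\sqrt\delta}\cdot\sqrt{C(\|w^\hor+w^\ver\|_{Y^\dual})\sqrt\delta/\kappa},
\end{equation*}
which, after absorbing numerical constants into $C(\cdot)$, is the asserted bound of order $\delta^{1/4}/\sqrt\kappa$. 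The only step requiring any care is the bookkeeping of radii, namely that the strip thickness $\tilde R$ used in constructing $\tilde u^\delta$ is the same $\tilde R=R/2$ already chosen in the proof of \cref{thm:TVMod}, so that the neighborhoods $\Lh_{2\tilde R}=\Lh_R$ and $\Lv_{2\tilde R}=\Lv_R$ appearing in the interpolation lemma are exactly those in which \cref{thm:TVMod} controls the weighted total variation; no new ingredient beyond the preceding lemmas is needed.
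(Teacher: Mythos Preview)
Your proposal is correct and follows essentially the same route as the paper: apply \cref{thm:normInterpolation} to $v=u^\delta-\tilde u^\delta$ with radius $\tilde R=R/2$, split each $\TV$-type term via the triangle inequality, invoke \cref{thm:TVMod} (together with \cref{thm:TVestimates} for $u^\delta$ itself) for the weighted terms, and bound $\TVp(u^\delta)$ by $\TVp(u^\dagger)+\sqrt\delta$ via optimality. The radius bookkeeping you flag ($2\tilde R=R$, so the neighborhoods in the interpolation lemma match those controlled by \cref{thm:TVMod}) is exactly the point the paper handles implicitly.
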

\begin{proof}
\Cref{thm:normInterpolation} implies
\begin{multline*}
\|u^\delta-\tilde u^\delta\|_1
\leq3\sqrt{\TVp(u^\delta-\tilde u^\delta)}\left(\sqrt{\TVd{\Lh}^\ver(u^\delta-\tilde u^\delta;\Lh_{2\tilde R})}+\sqrt{\TVd{\Lv}^\hor(u^\delta-\tilde u^\delta;\Lv_{2\tilde R})}\right)\\
\leq3\sqrt{\TVp(u^\delta)+\TVp(\tilde u^\delta)}\left(\sqrt{\TVd{\Lh}^\ver(u^\delta;\Lh_R)+\TVd{\Lh}^\ver(\tilde u^\delta;\Lh_R)}+\sqrt{\TVd{\Lv}^\hor(u^\delta;\Lv_R)+\TVd{\Lv}^\hor(\tilde u^\delta;\Lv_R)}\right)
\end{multline*}
so that from \cref{thm:TVMod} we obtain
\begin{equation*}
\|u^\delta-\tilde u^\delta\|_1
\leq\sqrt{\TVp(u^\delta)}C(\|w^\hor+w^\ver\|_{Y^\dual })\delta^{1/4}/\sqrt\kappa
\end{equation*}
with a constant only depending on $\|w^\hor+w^\ver\|_{Y^\dual }$.
Since $\TV(u^\delta)\leq\TV(u^\dagger)+\frac1{\sqrt\delta}F_{f^\delta}(f)\leq\TV(u^\dagger)+\sqrt\delta$ (with $F_{f^\delta}(f)=\frac12|f-f^\delta|_2^2$) we arrive at the claimed estimate.
\end{proof}

So far we estimated the error contribution from the incorrect localization of the discontinuities in the reconstruction.
As in the case of exact reconstruction, to examine the values of the reconstruction away from $L$ we require an additional dual variable
whose existence will be proved in \cref{thm:dualCertificatesIII} of \cref{sec:dualCertificates} under just a minimum separation condition.
To simplify the notation we abbreviate %
the orthogonal projection onto a closed set $A$ by $\pi_A$.

\begin{prop}[Error estimate for modification]\label{thm:modificationError}
In addition to the source conditions from \cref{thm:TVestimates}
assume the following source conditions:
\begin{enumerate}
\item
There exists $v\in Y^\dual $ with $1=K^\dual v$.
\item
For any combination of signs $s^\hor_{mn},s^\ver_{mn}\in\{-1,1\}$, $m=1,\ldots,M$, $n=1,\ldots,N$,
there exist $v^\hor,v^\ver\in Y^\dual $ with $-\D_yg^\hor = K^\dual v^\hor$, $-\D_xg^\ver = K^\dual v^\ver$ for some $g^\ver,g^\hor\in C(\Omega)$
satisfying $\|g^\hor\|_\infty,\|g^\ver\|_\infty \leq 1$,
\begin{align*}
|g^\hor(x,y) - g^\hor(x,\pi_{\Lh}(y))| &\leq \eta\dist(y,\Lh)^2 \text{ on }\Lh_R,\\
|g^\ver(x,y) - g^\ver(\pi_{\Lv}(x),y)| &\leq \eta\dist(x,\Lv)^2 \text{ on }\Lv_R
\end{align*}
and (for some fixed $C>0$, independent of $s^\hor_{mn},s^\ver_{mn}$)
\begin{align*}
s^\hor_{mn}\int_{x_m}^{x_{m+1}} g^\hor(x,y_n) \wrt x &\geq C,&m=1,\ldots,M,\,n=1,\ldots,N,\\
s^\ver_{mn}\int_{y_n}^{y_{n+1}} g^\ver(x_m,y) \wrt y &\geq C,&m=1,\ldots,M,\,n=1,\ldots,N,
\end{align*}
where the norm of $v^\hor,v^\ver$ shall be bounded, $\|v^\hor\|_{Y^\dual },\|v^\ver\|_{Y^\dual }\leq\frac1C$.
\end{enumerate}
Then
\begin{multline*}
 \|\tilde{u}^\delta-u^\dagger \|_1
 \leq\frac{C(\frac1C)C(\|w^\hor+w^\ver\|_{Y^\dual })}{\sqrt\kappa R}\sqrt{\TVp(u^\dagger)+\sqrt\delta}\,\delta^{1/4}\\+
 (\tfrac1{\kappa R^3}+\tfrac\eta\kappa+1)C(\tfrac1C)\left[C(\|w^\hor+w^\ver\|_{Y^\dual })+C(\|v\|_{Y^\dual })\right]\sqrt\delta+\|\tilde u^\delta-u^\delta\|_1,
\end{multline*}
where $C(s)$ denotes a constant depending only and monotonically on $s$.
\end{prop}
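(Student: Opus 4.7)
The strategy is a duality argument. Writing $\|\tilde u^\delta - u^\dagger\|_1 = \int_\Omega \sigma (\tilde u^\delta - u^\dagger)\wrt(x,y)$ for $\sigma = \sgn(\tilde u^\delta - u^\dagger)$ (with $|\sigma|\leq 1$), and splitting as $\int\sigma(u^\delta - u^\dagger)\wrt(x,y) + \int\sigma(\tilde u^\delta - u^\delta)\wrt(x,y)$, I absorb the second summand into the $\|\tilde u^\delta - u^\delta\|_1$ appearing on the right-hand side of the claim. It remains to bound $\int\sigma(u^\delta - u^\dagger)\wrt(x,y)$, which I do by replacing $\sigma$ with a cell-wise constant sign pattern $\phi = \sum_{m,n}s_{mn}\chi_{[x_m,x_{m+1}]\times[y_n,y_{n+1}]}$, with $s_{mn}\in\{-1,+1\}$ chosen as the sign of the cell average of $u^\delta - u^\dagger$. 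The replacement error $2\int_{\{\sigma\neq\phi\}}|u^\delta-u^\dagger|\wrt(x,y)$ will be controlled by the $L^1$-oscillation of $u^\delta - u^\dagger$ within cells; a BV Poincar\'e-type inequality on each cell together with $\TV(u^\delta;(L_R)^c) \lesssim \sqrt\delta/(\kappa R^2)$ from \cref{thm:TVestimates} yields a contribution to the $\tfrac{1}{\kappa R^3}\sqrt\delta$ term in the final estimate (cell diameters $\gtrsim R$ absorb the extra $1/R$ after summing over cells).

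Next, I will decompose $\phi = \bar\phi + \phi'$ into its spatial mean and a zero-mean cell-wise pattern. Source condition (1) handles the mean via $\bar\phi\int(u^\delta-u^\dagger)\wrt(x,y) = \bar\phi\langle v, K(u^\delta-u^\dagger)\rangle \leq \bar\phi\,C(\|v\|)\sqrt\delta$ by \cref{rem:rates}. For the zero-mean component $\phi'$, a summation by parts in the $x$- and $y$-grid directions identifies $\int\phi'(u^\delta-u^\dagger)\wrt(x,y)$ with a telescoping combination of row- and column-interface quantities, which, by the sign inequalities of source condition (2), are matched to $\int_{x_m}^{x_{m+1}}g^\hor(x,y_n)\wrt x$ and $\int_{y_n}^{y_{n+1}}g^\ver(x_m,y)\wrt y$. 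Using the integration-by-parts identities $\int g^\hor\wrt\D_y(u^\delta-u^\dagger) = \langle v^\hor,K(u^\delta-u^\dagger)\rangle$ and $\int g^\ver\wrt\D_x(u^\delta-u^\dagger) = \langle v^\ver,K(u^\delta-u^\dagger)\rangle$, each such term is bounded by $(C(\|w^\hor+w^\ver\|)+C(1/C))\sqrt\delta$ via \cref{rem:rates}.

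The hard part will be the replacement of $g^\hor(x,y)$ by $g^\hor(x,\pi_\Lh(y))$ in this telescoping identification, since the latter is what summation by parts naturally produces, but the former is what \cref{rem:rates} controls. On $\Lh_R$, the quadratic bound $|g^\hor(x,y) - g^\hor(x,\pi_\Lh(y))| \leq \eta\dist(y,\Lh)^2$ combines with $\TVd{\Lh}^\ver(u^\delta;\Lh_R) \lesssim \sqrt\delta/\kappa$ from \cref{thm:TVestimates} to produce the $\eta\sqrt\delta/\kappa$ contribution, while the remainder over $(\Lh_R)^c$ is absorbed into the $1/(\kappa R^3)\sqrt\delta$ term via $\TV^\ver(u^\delta;(\Lh_R)^c)\lesssim \sqrt\delta/(\kappa R^2)$; the analogous analysis applies to $g^\ver$. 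Finally, the $\delta^{1/4}$ term with prefactor $\sqrt{\TVp(u^\dagger) + \sqrt\delta}$ will emerge through a Cauchy--Schwarz product $\sqrt{\TVp(u^\delta)\cdot\TVd(u^\delta)}$ when translating cell-interface TV control back to $L^1$ oscillation inside the boundary strips of $L_R$, mirroring the mechanism of \cref{thm:differenceToModification}. The main obstacle will be carrying out the summation by parts so that the resulting telescope exposes precisely the quantities controlled by $g^\hor, g^\ver$, and then correctly assigning each error term to its scaling factor $\eta$, $1/R^2$, or $1/R^3$.
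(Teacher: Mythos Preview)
Your plan diverges substantially from the paper's argument, and as written it has a genuine gap at two of its central steps.

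\textbf{Replacement error.} You propose to control $\int(\sigma-\phi)(u^\delta-u^\dagger)$ by a cell-wise Poincar\'e inequality combined with $\TV(u^\delta;(L_R)^c)\lesssim\sqrt\delta/(\kappa R^2)$. But each cell $[x_m,x_{m+1}]\times[y_n,y_{n+1}]$ contains boundary strips lying inside $L_R$, where $\TV(u^\delta)$ is \emph{not} small --- indeed, that is where the dominant variation of $u^\delta$ lives. A Poincar\'e estimate on the full cell produces $\TV(u^\delta;\text{cell})$, not $\TV(u^\delta;(L_R)^c)$, so the claimed control fails. Moreover, $\sigma=\sgn(\tilde u^\delta-u^\dagger)$ is the sign of a \emph{different} function than the one you are pairing it with, so the set $\{\sigma\neq\phi\}$ need not be small in any useful sense; it can occupy an entire cell even when $u^\delta-u^\dagger$ is nearly constant there.

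\textbf{Summation by parts.} You assert that Abel-type summation on $\int\phi'(u^\delta-u^\dagger)$ produces interface quantities ``matched to $\int_{x_m}^{x_{m+1}}g^\hor(x,y_n)\wrt x$''. But the source condition gives a lower bound on integrals of $g^\hor,g^\ver$ along segments of $L$, not on integrals of $u^\delta-u^\dagger$. The only link between the two is the identity $\int_\Omega g^\ver\wrt\D_x(u^\delta-u^\dagger)=\langle K^\dual v^\ver,u^\delta-u^\dagger\rangle$, which involves $\D_x(u^\delta-u^\dagger)$, not cell integrals of $u^\delta-u^\dagger$. Your telescoping step does not expose that quantity.

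\textbf{What the paper does instead.} The paper avoids both issues by first applying the Poincar\'e inequality on $\Omega$ to reduce $\|\tilde u^\delta-u^\dagger-\av(\tilde u^\delta-u^\dagger)\|_1$ to $\TVp(\tilde u^\delta-u^\dagger)$. This is the key step you are missing: it converts the $L^1$ problem into a $\TV$ problem, and the specific construction of $\tilde u^\delta$ then guarantees that $\D_x(\tilde u^\delta-u^\dagger)$ is concentrated on $\Lv$ (as a jump $[\tilde u^\delta-u^\dagger]_x$) plus a small contribution outside $\Lv_\pm$. The segment-wise average $h^\ver$ of this jump is then tested against $g^\ver$ using the sign condition, and integration by parts connects $\int_{\Lv}g^\ver h^\ver$ back to $\langle K^\dual v^\ver,u^\delta-u^\dagger\rangle$ modulo error terms handled by \cref{thm:TVestimates}, \cref{thm:TVMod}, and the third inequality of \cref{thm:normInterpolation}. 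The oscillation term $\int_{\Lv}|[\tilde u^\delta-u^\dagger]_x-h^\ver|$ is controlled by a one-dimensional Poincar\'e along each vertical segment, which reduces to $\TV^\ver(u^\delta;(\Lh_{\tilde R/2})^c)$ via the defining properties of the points $x_m^\pm$. Your proposal does not exploit the jump structure of $\tilde u^\delta$ at all, which is precisely what makes the argument tractable.
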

\begin{proof}
Abbreviating by $\av u$ the average value of an integrable function $u$ on $\Omega$, we estimate
\begin{align*}
\|\tilde u^\delta-u^\dagger\|_1
&\leq\|\tilde u^\delta-u^\dagger-\av(\tilde u^\delta-u^\dagger)\|_1
+|\av(u^\delta-u^\dagger)|
+|\av(u^\delta-\tilde u^\delta)|\\
&\leq\TVp(\tilde u^\delta-u^\dagger)
+|\av(u^\delta-u^\dagger)|
+\|u^\delta-\tilde u^\delta\|_1,
\end{align*}
where we used Poincar\'e's inequality.
We can further estimate
\begin{equation*}
|\av(u^\delta-u^\dagger)|
=|\langle K^\dual v,u^\delta-u^\dagger\rangle|
\leq(C(\|w^\hor+w^\ver\|_{Y^\dual })+C(\|v\|_{Y^\dual }))\sqrt\delta
\end{equation*}
by \cref{rem:rates}.
Thus it only remains to estimate $\TVp(\tilde u^\delta-u^\dagger)=\TV^\hor(\tilde u^\delta-u^\dagger)+\TV^\ver(\tilde u^\delta-u^\dagger)$,
of which we just consider the first summand (the second is treated analogously).
To this end let us abbreviate by $[\tilde{u}^\delta-u^\dagger]_x$ the Radon--Nikodym derivative of $\D_x(\tilde{u}^\delta-u^\dagger)$
with respect to the one-dimensional Hausdorff measure $\hd^1\mres\Lv$ on $\Lv$;
it equals the function value jump across $\Lv$.
Now let $h^\ver:\Lv\to\R$ be the piecewise constant function
which on $\{x_m\}\times[y_n,y_{n+1}]$ takes as value the average of $[\tilde{u}^\delta-u^\dagger]_x$ on that segment,
and let $s^\ver_{mn}$ be its sign on that segment. We have
\begin{equation*}
\TV^\hor(\tilde{u}^\delta-u^\dagger)
\leq\int_{\Lv} |h^\ver| \wrt\hd^1+\int_{\Lv} \left|[\tilde{u}^\delta-u^\dagger]_x-h^\ver\right| \wrt\hd^1+\TV^\hor(u^\delta;(\Lv_\pm)^c).
\end{equation*}
The first summand can be estimated (again letting $y_{n+\frac12}$ denote the midpoint of $[y_n,y_{n+1}]$) via
\begin{multline*}
\int_{\Lv} |h^\ver| \wrt\hd^1
=\sum_{m=1}^M\sum_{n=1}^Ns^\ver_{mn}h^\ver(x_m,y_{n+\frac12})
\leq\frac1C\sum_{m=1}^M\sum_{n=1}^Nh^\ver(x_m,y_{n+\frac12})\int_{y_n}^{y_{n+1}}g^\ver(x_m,y)\wrt y\\
\leq\frac1C\int_{\Lv} g^\ver h^\ver\wrt\hd^1
=\frac1C\int_{\Lv_\pm} g^\ver\wrt\D_x (\tilde{u}^\delta-u^\dagger)+\frac1C\int_{\Lv} g^\ver(h^\ver-[\tilde{u}^\delta-u^\dagger]_x)\wrt\hd^1\\
\leq \frac1C\int_{\Lv_\pm} g^\ver\wrt\D_x (\tilde{u}^\delta-u^\dagger)+\frac1C\int_{\Lv} \left|[\tilde{u}^\delta-u^\dagger]_x-h^\ver\right| \wrt\hd^1
\end{multline*}
so that, using Poincar\'e's inequality
\begin{align*}
\TV^\hor(\tilde{u}^\delta-u^\dagger)
&\leq \frac1C\int_{\Lv_\pm} g^\ver\wrt\D_x (\tilde{u}^\delta-u^\dagger)+\left(\frac1C+1\right)\int_{\Lv} \left|[\tilde{u}^\delta-u^\dagger]_x-h^\ver\right| \wrt\hd^1+\TV^\hor(u^\delta;(\Lv_\pm)^c)\\
&\leq \frac1C\int_{\Lv_\pm} g^\ver\wrt\D_x (\tilde{u}^\delta-u^\dagger)+\left(\frac1C+1\right)\sum_{m=1}^M\sum_{n=1}^N\int_{y_n}^{y_{n+1}}\left|\D_y[\tilde{u}^\delta-u^\dagger]_x(x_m,y)\right|\wrt y+\TV^\hor(u^\delta;(\Lv_\pm)^c)\\
&=\frac1C\int_\Omega g^\ver \wrt \D _x (\tilde{u}^\delta-u^\dagger) - \frac1C\int_{(\Lv_{\pm})^c} g^\ver \wrt \D_x (\tilde{u}^\delta-u^\dagger)\\
&\quad+\left(\frac1C+1\right)\sum_{m=1}^M\int_{(\R/\Z)\setminus P\Lh_{\tilde R/2}} \wrt|\D_y u^\delta(x_m^+,y)-\D_y u^\delta(x_m^-,y)|+\TV^\hor(u^\delta;(\Lv_\pm)^c)\\
&\leq \frac1C\int_\Omega g^\ver\wrt \D _x (\tilde{u}^\delta-u^\dagger)+ \frac1C\TV^\hor(u^\delta;(\Lv_\pm)^c) \\
&\quad+\left(\frac1C+1\right)\sum_{m=1}^M\int_{\R/\Z \setminus P\Lh_{\tilde R/2}} \wrt (|\D_y u^\delta|(x_m^+,y)+|\D_y u^\delta|(x_m^-,y))+\TV^\hor(u^\delta;(\Lv_\pm)^c)\\
&\leq \frac1C\int_\Omega g^\ver\wrt \D _x (\tilde{u}^\delta-u^\dagger)  + \left(\frac1C+1\right)\TV^\hor(u^\delta;(\Lv_{\tilde R/2})^c)+\left(\frac1C+1\right)\frac{4}{\tilde R}\TV^\ver(u^\delta;(\Lh_{\tilde R/2})^c),
\end{align*}
where in the last step we exploited the choice of the $x_m^\pm$.
We further estimate
\begin{align*}
\int_\Omega g^\ver \wrt \D _x (\tilde{u}^\delta-u^\dagger) 
& = \int_\Omega g^\ver \wrt \D _x (u^\delta-u^\dagger) + \int_{L_R} g^\ver \wrt \D _x (\tilde{u}^\delta-u^\delta) \\
& \leq \langle K^\dual v^\ver,u^\delta - u^\dagger\rangle + \int_{\Lv_R} g^\ver \wrt \D _x (\tilde{u}^\delta-u^\delta) + \int_{L_R  \setminus \Lv_R} g^\ver \wrt \D _x (\tilde{u}^\delta-u^\delta) \\ 
&  \leq \langle K^\dual v^\ver,u^\delta - u^\dagger\rangle +\int_{\Lv_R} g^\ver \wrt \D _x (\tilde{u}^\delta-u^\delta)  +  \TV^\hor(u^\delta;(\Lv_R)^c).
\end{align*}
Finally, abbreviating $e^\delta=\tilde u^\delta-u^\delta$ we have
\begin{align*}
\int_{\Lv_R} g^\ver \wrt \D_x e^\delta 
& = \sum_{m=1}^M \int_{x_m-R}^{x_m+R}  \int_0^1g^\ver(x_m,y) + [g^\ver(x,y) - g^\ver(x_m,y)] \wrt \D_x e^\delta(x,y)\\
& \leq  \sum_{m=1}^M \int_0^1g^\ver(x_m,y) \left(e^\delta(x_m+R,y)-e^\delta(x_m-R,y)\right) \wrt y\\
&\quad+ \int_{x_m-R}^{x_m+R} \int_0^1 \eta\dist(x,x_m)^2  \wrt |\D_x e^\delta|(x,y) \\
& \leq  \sum_{m=1}^M\sum_{r\in\{-R,R\}}  \|e^\delta(x_m+r,\cdot)\|_1 + \eta\TVd{\Lv}^\hor(e^\delta;\Lv_{R}) \\
& \leq  \TV^\hor(e^\delta;(\Lv_R)^c)+\tfrac1R\sqrt{\TV^\ver(e^\delta)\TVd{\Lh}^\ver(e^\delta;\Lh_R)} + \eta\TVd{\Lv}^\hor(e^\delta;\Lv_R)
\end{align*}
where at the end we used the third estimate from \cref{thm:normInterpolation}.
In summary, we have arrived at
\begin{multline*}
\TV^\hor(\tilde u^\delta-u^\dagger)
\leq\tfrac1C\langle K^\dual v^\ver,u^\delta - u^\dagger\rangle
+(\tfrac3C+1)\TV^\hor(u^\delta;(\Lv_{\tilde R/2})^c)
+\tfrac1C\TV^\hor(\tilde u^\delta;(\Lv_{\tilde R/2})^c)\\
+\frac{4(\frac1C+1)}{\tilde R}\TV^\ver(u^\delta;(\Lh_{\tilde R/2})^c)
+\frac1{CR}\sqrt{\TV^\ver(u^\delta)+\TV^\ver(\tilde u^\delta)}\sqrt{\TVd{\Lh}^\ver(u^\delta;\Lh_R)+\TVd{\Lh}^\ver(\tilde u^\delta;\Lh_R)}\\
+\tfrac\eta C\TVd{\Lv}^\hor(u^\delta;\Lv_R)
+\tfrac\eta C\TVd{\Lv}^\hor(\tilde u^\delta;\Lv_R),
\end{multline*}
which together with \cref{thm:TVestimates,thm:TVMod,rem:rates} yields the desired estimate.
\end{proof}

The overall $L^1$-error is a direct consequence.

\begin{cor}[Convergence rate]\label{thm:convergenceRate}
Under the source conditions of \cref{thm:modificationError,thm:differenceToModification} any solution $u^\delta$ to $P_{\sqrt\delta}(f^\delta)$ satisfies
\begin{equation*}
\|u^\delta-u^\dagger\|_1
\leq C\delta^{1/4}
\end{equation*}
for a constant $C$ only depending on $u^\dagger$.
\end{cor}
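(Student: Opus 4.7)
The plan is essentially to combine the two previous propositions by the triangle inequality. Specifically, I would write
\begin{equation*}
\|u^\delta-u^\dagger\|_1 \leq \|u^\delta-\tilde u^\delta\|_1 + \|\tilde u^\delta-u^\dagger\|_1
\end{equation*}
and then invoke \cref{thm:differenceToModification} to bound the first term and \cref{thm:modificationError} to bound the second. Note that the right-hand side of \cref{thm:modificationError} itself contains a $\|\tilde u^\delta-u^\delta\|_1$ term, which can be absorbed by applying \cref{thm:differenceToModification} a second time. No new ideas should be required here.

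To extract a clean $\delta^{1/4}$ rate, I would then fix $R$ to a constant strictly less than $\Delta/4$ (as required by \cref{thm:TVestimates,thm:TVMod,thm:normInterpolation}), so that the $\frac{1}{\kappa R}$, $\frac{1}{\kappa R^3}$ and $\frac{\eta}{\kappa}+1$ factors merge into the overall constant. Next, observe that for $\delta\leq 1$ we have $\sqrt\delta \leq \delta^{1/4}$, and that $\sqrt{\TVp(u^\dagger)+\sqrt\delta}$ is bounded by a constant depending only on $u^\dagger$ (again for $\delta\leq 1$; for $\delta\geq 1$ the estimate is trivial since $\|u^\delta-u^\dagger\|_1$ can be bounded in terms of $\TV(u^\delta)+\TV(u^\dagger) \lesssim \TV(u^\dagger)+\sqrt\delta$ via Poincar\'e and the average-value control from the source condition $1=K^\dual v$).

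Since $u^\dagger$ determines $\|w^\hor+w^\ver\|_{Y^\dual}$, $\|v\|_{Y^\dual}$, $\kappa$, $\eta$, $C$ and $\Delta$ (these are the constants in the source conditions and in the dual certificates built from $u^\dagger$), all the constants $C(\cdot)$ appearing in \cref{thm:differenceToModification,thm:modificationError} collapse into a single constant depending only on $u^\dagger$. There is no real obstacle here — it is purely bookkeeping of constants and the observation that $\sqrt\delta$ terms are dominated by $\delta^{1/4}$ in the relevant regime. The genuine work has already been done in the two preceding propositions (and ultimately in the dual certificate constructions of \cref{sec:dualCertificates}).
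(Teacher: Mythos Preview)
Your proposal is correct and matches the paper's approach: the corollary is stated as a direct consequence of \cref{thm:differenceToModification,thm:modificationError} with no additional proof given, and your triangle-inequality splitting plus constant bookkeeping is exactly the intended argument.
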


\subsection{Convergence rates for level sets}

An $L^1$-convergence rate for piecewise constant functions immediately implies a corresponding convergence result for the superlevel sets.
Below, for a function $u:\Omega\to\R$ and $t\in\R$ we abbreviate the $t$-superlevel set as
\begin{equation*}
\{u\geq t\}=\{x\in\Omega\,|\,u(x)\geq t\}.
\end{equation*}
The symmetric difference between two sets $A,B\subset\Omega$ is denoted
\begin{equation*}
A\Delta B=(A\setminus B)\cup(B\setminus A),
\end{equation*}
and the Lebesgue measure of a measurable set $A\subset\Omega$ is indicated by $|A|$.

\begin{cor}[Convergence rates for level sets]\label{thm:levelsets}
Under the source conditions of \cref{thm:modificationError,thm:differenceToModification}, the $t$-superlevel sets of any solution $u^\delta$ to $P_{\sqrt\delta}(f^\delta)$ satisfy
\begin{equation*}
|\{u^\delta\geq t\}\Delta\{u^\dagger\geq t\}|\leq\frac{C\delta^{1/4}}{\dist(t,\mathrm{range}(u^\dagger))}
\end{equation*}
with a constant $C$ only depending on $u^\dagger$.
\end{cor}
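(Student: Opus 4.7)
The plan is to reduce the level-set estimate to the $L^1$-error bound from \cref{thm:convergenceRate} via a Markov-type argument, exploiting the fact that $u^\dagger$ takes only finitely many values.

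First I would observe that if $\dist(t,\mathrm{range}(u^\dagger))=0$ the statement is vacuous, so we may assume $d:=\dist(t,\mathrm{range}(u^\dagger))>0$. Since $u^\dagger=\sum_{m,n}u^\dagger_{mn}\chi_{[x_m,x_{m+1}[\times[y_n,y_{n+1}[}$ takes only finitely many values, each of them lies at distance at least $d$ from $t$. Consequently, on $\{u^\dagger\geq t\}$ we in fact have $u^\dagger\geq t+d$ almost everywhere, and on $\{u^\dagger<t\}$ we have $u^\dagger\leq t-d$ almost everywhere.

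Next I would show that on the symmetric difference the two functions must differ by at least $d$. On $\{u^\dagger\geq t\}\setminus\{u^\delta\geq t\}$ one has $u^\dagger\geq t+d$ but $u^\delta<t$, so $|u^\dagger-u^\delta|\geq d$; symmetrically on $\{u^\delta\geq t\}\setminus\{u^\dagger\geq t\}$ one has $u^\delta\geq t$ and $u^\dagger\leq t-d$, again giving $|u^\dagger-u^\delta|\geq d$. Therefore
\begin{equation*}
d\,|\{u^\delta\geq t\}\Delta\{u^\dagger\geq t\}|
\leq\int_{\{u^\delta\geq t\}\Delta\{u^\dagger\geq t\}}|u^\delta-u^\dagger|\wrt(x,y)
\leq\|u^\delta-u^\dagger\|_1.
\end{equation*}
Dividing by $d$ and invoking \cref{thm:convergenceRate} to bound $\|u^\delta-u^\dagger\|_1\leq C\delta^{1/4}$ yields the claim.

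There is no real obstacle here: the only thing to be careful about is that $\mathrm{range}(u^\dagger)$ is a finite set so that $\dist(t,\mathrm{range}(u^\dagger))$ bounds the pointwise deviation from $t$ of $u^\dagger$ from \emph{both} sides, which is what enables the symmetric-difference argument. The rate $\delta^{1/4}$ and the constant are then inherited directly from \cref{thm:convergenceRate}, with the additional division by $\dist(t,\mathrm{range}(u^\dagger))$ appearing naturally from the Markov-type step.
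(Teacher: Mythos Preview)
Your proof is correct and in fact more direct than the paper's. The paper argues via the layer-cake (coarea) identity
\[
\|u^\delta-u^\dagger\|_1=\int_{-\infty}^{\infty}|\{u^\delta\geq t\}\Delta\{u^\dagger\geq t\}|\wrt t,
\]
then uses the monotonicity of superlevel sets together with a case split (depending on which of $\{u^\delta\geq s\}\setminus\{u^\dagger\geq s\}$ or its counterpart is larger) to show that the integrand is bounded below by $\tfrac12|\{u^\delta\geq s\}\Delta\{u^\dagger\geq s\}|$ on an interval of length $\varepsilon=\dist(s,\mathrm{range}(u^\dagger))$, yielding the bound with an extra factor of $2$. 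Your Markov/Chebyshev-type argument bypasses the layer-cake formula and the case analysis entirely by observing directly that $|u^\delta-u^\dagger|\geq d$ pointwise on the symmetric difference; this is shorter, gives a slightly better constant, and uses only the finiteness of $\mathrm{range}(u^\dagger)$. The paper's route has the mild conceptual advantage of making transparent that the level-set estimate is an averaged (in $t$) consequence of the $L^1$-bound, but for the purpose of this corollary your argument is the cleaner one.
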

\begin{proof}
Fix an arbitrary $s\notin\mathrm{range}(u^\dagger)$ and abbreviate $\varepsilon=\dist(s,\mathrm{range}(u^\dagger))$,
then $\{u^\dagger\geq t\}$ is independent of $t\in(s-\varepsilon,s+\varepsilon]$.
Now if
\begin{equation*}
|\{u^\delta\geq s\}\setminus\{u^\dagger\geq s\}|\geq|\{u^\dagger\geq s\}\setminus\{u^\delta\geq s\}|,
\end{equation*}
then due to the monotonicity of $\{u^\delta\geq t\}$ in $t$, for all $t\in]s-\varepsilon,s[$ we have
\begin{equation*}
|\{u^\delta\geq t\}\Delta\{u^\dagger\geq t\}|
\geq|\{u^\delta\geq t\}\setminus\{u^\dagger\geq t\}|
\geq|\{u^\delta\geq s\}\setminus\{u^\dagger\geq s\}|
\geq\tfrac12|\{u^\delta\geq s\}\Delta\{u^\dagger\geq s\}|.
\end{equation*}
Likewise, the opposite inequality at $s$ implies for all $t\in]s,s+\varepsilon[$
\begin{equation*}
|\{u^\delta\geq t\}\Delta\{u^\dagger\geq t\}|
\geq\tfrac12|\{u^\delta\geq s\}\Delta\{u^\dagger\geq s\}|.
\end{equation*}
Thus by \cref{thm:convergenceRate} and the area formula we have
\begin{multline*}
C\delta^{1/4}
\geq\|u^\delta-u^\dagger\|_1
=\int_{-\infty}^\infty|\{u^\delta\geq t\}\Delta\{u^\dagger\geq t\}|\wrt t\\
\geq\int_{s-\varepsilon}^{s+\varepsilon}|\{u^\delta\geq t\}\Delta\{u^\dagger\geq t\}|\wrt t
\geq\tfrac\varepsilon2|\{u^\delta\geq s\}\Delta\{u^\dagger\geq s\}|
\qedhere
\end{multline*}
\end{proof}

\section{Construction and estimation of dual variables}\label{sec:dualVariables}

The error estimates \cref{thm:exactRecoverySupport} to \cref{thm:levelsets} were all based on source conditions for the ground truth image $u^\dagger$.
In this section we will show that, as long as the minimum distance $\Delta$ between discontinuity lines of $u^\dagger$ is big enough,
these source conditions are all satisfied for the particular forward operator choice of the truncated Fourier transform \eqref{eqn:truncatedFourier}.
The source conditions are all of the form that a dual variable $w\in Y^\dual $ is sought such that $K^\dual w$ satisfies certain regularity conditions.
The dual operator $K^\dual $ is here given as
\begin{equation*}
K^\dual w=\sum_{|k|_\infty\leq\fcut}w_ke^{2\pi i(x,y)\cdot(k_1,k_2)}
\end{equation*}
so that the construction of dual variables reduces to the construction of trigonometric polynomials.
Because of their good localization properties, so-called Fej\'er kernels lend themselves as basic elements for such constructions.

\subsection{Properties of the Fej\'er kernel}
Let us abbreviate the so-called Dirichlet kernel \cite[\S\,15.2]{BrBrTh97} as
\begin{equation*}
\Dir_k(t)=\sum_{j=-k}^ke^{2\pi ijt}=\frac{\sin((2k+1)\pi t)}{\sin(\pi t)}
\end{equation*}
and the so-called Fej\'er kernel \cite[\S\,15.3]{BrBrTh97} as
\begin{equation*}
\Fej(t)
=\frac1{\fcut+1}\sum_{k=0}^{\fcut}\Dir_k(t)
=\sum_{k=-\fcut}^{\fcut}(1-\tfrac{|k|}{\fcut+1})\e^{2\pi ikt}
=\frac1{\fcut+1}\left(\frac{\sin((\fcut+1)\pi t)}{\sin(\pi t)}\right)^2
\end{equation*}
and recall that $\int_0^1\Fej(t)\wrt t=1$ and that $\Fej$ is nonnegative and periodic on $[0,1]$.
For notational simplicity we will in the following view the Fej\'er kernel as periodic function on $[-\frac12,\frac12]$.
Its function values and derivatives can be bounded as follows.

\begin{lem}[Estimates of Fej\'er kernel]\label{thm:FejerDerivatives}
There exists $C>0$ such that on $[-\frac12,\frac12]$ the derivatives of the Fej\'er kernel satisfy 
\begin{equation*}
|\Fej^{(j)}(t)|\leq C\min\left\{(\fcut+1)^{j+1},\tfrac{(\fcut+1)^{j-1}}{\sin^2(\pi t)}\right\}
\qquad\text{for }j=0,\ldots,4.
\end{equation*}
\end{lem}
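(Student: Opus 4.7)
The plan is to prove the two bounds appearing in the minimum separately. For the first bound $|\Fej^{(j)}(t)|\leq C(\fcut+1)^{j+1}$, I would differentiate the Fourier series representation term by term to obtain
\[
\Fej^{(j)}(t)=\sum_{k=-\fcut}^{\fcut}\left(1-\tfrac{|k|}{\fcut+1}\right)(2\pi ik)^j e^{2\pi ikt},
\]
and then estimate crudely $|\Fej^{(j)}(t)|\leq(2\pi)^j\sum_{k=-\fcut}^{\fcut}|k|^j\leq C(\fcut+1)^{j+1}$.

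For the second bound $|\Fej^{(j)}(t)|\leq C(\fcut+1)^{j-1}/\sin^2(\pi t)$, I would exploit the closed form $\Fej=F^2/(\fcut+1)$ with $F(t)=\sin((\fcut+1)\pi t)/\sin(\pi t)$. Leibniz yields
\[
\Fej^{(j)}(t)=\frac{1}{\fcut+1}\sum_{k=0}^{j}\binom{j}{k}F^{(k)}(t)F^{(j-k)}(t),
\]
so it suffices to establish the pointwise estimate $|F^{(k)}(t)|\leq C(\fcut+1)^k/|\sin(\pi t)|$ for $k=0,\ldots,4$, since each summand above is then controlled by $C(\fcut+1)^j/\sin^2(\pi t)$, yielding the desired bound after dividing by $\fcut+1$.

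The main obstacle is the proof of this key estimate on $F^{(k)}$, which I would carry out by induction on $k$. The base case $k=0$ follows directly from the closed form $|F(t)|\leq 1/|\sin(\pi t)|$. For the inductive step, I would differentiate the identity $F(t)\sin(\pi t)=\sin((\fcut+1)\pi t)$ $k$ times via Leibniz to obtain
\[
F^{(k)}(t)\sin(\pi t)=\bigl(\sin((\fcut+1)\pi t)\bigr)^{(k)}-\sum_{l=0}^{k-1}\binom{k}{l}F^{(l)}(t)\bigl(\sin(\pi t)\bigr)^{(k-l)}.
\]
The leading term has modulus at most $((\fcut+1)\pi)^k$, while the remaining sum is controlled by the induction hypothesis. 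The difficulty is that after dividing by $|\sin(\pi t)|$ the inductive terms produce a factor $1/\sin^2(\pi t)$ rather than the desired $1/|\sin(\pi t)|$. I would resolve this by a case distinction on $t$: when $|\sin(\pi t)|\geq 1/(\fcut+1)$, the conversion $1/\sin^2(\pi t)\leq(\fcut+1)/|\sin(\pi t)|$ absorbs the excess factor and closes the induction, whereas when $|\sin(\pi t)|<1/(\fcut+1)$ one bypasses the induction altogether and uses the crude bound $|F^{(k)}(t)|\leq C(\fcut+1)^{k+1}$ (obtained from the Fourier series of $F$, analogously to the first part), which in this regime automatically implies the desired estimate since $(\fcut+1)^{k+1}\leq(\fcut+1)^k/|\sin(\pi t)|$.
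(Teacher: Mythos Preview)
Your proof is correct and takes a genuinely different route from the paper's for the second bound. For the first bound $|\Fej^{(j)}|\leq C(\fcut+1)^{j+1}$ both arguments are essentially the same (termwise differentiation of an exponential sum). For the second bound, the paper works directly with the product $\sin^2(\pi t)\,\Fej^{(j)}(t)$ and runs an induction on $j$ via the identity
\[
\sin^2(\pi t)\,\Fej^{(j)}(t)=\tfrac{\d}{\d t}\!\left(\sin^2(\pi t)\,\Fej^{(j-1)}(t)\right)-2\sin(\pi t)\cos(\pi t)\,\Fej^{(j-1)}(t),
\]
bounding the derivative on the right by Bernstein's inequality (applied to the degree-$(\fcut+2)$ trigonometric polynomial $\sin^2(\pi t)\,\Fej^{(j-1)}(t)$), together with the same regime split at $t\sim 1/\fcut$. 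Your approach instead factors $\Fej=F^2/(\fcut+1)$ and reduces everything, via Leibniz, to the one-sided estimate $|F^{(k)}(t)|\leq C(\fcut+1)^k/|\sin(\pi t)|$, which you prove by a second Leibniz induction on the identity $F(t)\sin(\pi t)=\sin((\fcut+1)\pi t)$. This is more elementary in that it avoids Bernstein entirely, at the cost of two nested Leibniz expansions and the auxiliary crude bound on $F^{(k)}$ (which is immediate from the exponential expansion $F(t)=\sum_{k=0}^{\fcut}e^{i\pi(2k-\fcut)t}$). Both arguments hinge on the same case distinction $|\sin(\pi t)|\gtrless 1/(\fcut+1)$, and both yield constants depending only on the derivative order $j\leq 4$.
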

\begin{proof}
By definition of the Fej\'er kernel we have
\begin{equation*}
|\Fej^{(j)}(t)|
=\frac1{\fcut+1}\left|\sum_{k=0}^{\fcut}\Dir_k^{(j)}(t)\right|
\leq\frac1{\fcut+1}\sum_{k=0}^{\fcut}\left|\Dir_k^{(j)}(t)\right|
\leq\max_{k=0,\ldots,\fcut}\left|\Dir_k^{(j)}(t)\right|.
\end{equation*}
The derivative of the Dirichlet kernel can readily be estimated as
\begin{equation*}
|\Dir_k^{(j)}(t)|
=\left|\frac{\d^j}{\d t^j}\sum_{l=-k}^{k}e^{2\pi ilt}\right|
=\left|\sum_{l=-k}^{k}(2\pi il)^je^{2\pi ilt}\right|
\leq(2k+1)(2\pi k)^j
\lesssim k^{j+1}
\end{equation*}
so that $|\Fej^{(j)}(t)|\lesssim\fcut^{j+1}$.
Now for $t<\frac1{\fcut}$ it holds
\begin{equation*}
\frac1{\sin^2(\pi t)}\geq\frac1{\sin^2(\pi/\fcut)}\gtrsim\fcut^2
\end{equation*}
so that $|\Fej^{(j)}(t)|\lesssim\frac{\fcut^{j-1}}{\sin^2(\pi t)}$ as desired.
For $t\geq\frac1{\fcut}$ we use induction in $j$.
For the zeroth derivative $j=0$ the statement is fulfilled by the explicit expression for $\Fej(t)$.
Now for complex trigonometric polynomials $T(t)=\sum_{l=-k}^ka_le^{i2\pi lt}$ of degree $k$, the Bernstein inequality reads
\begin{equation*}
\sup_{t\in\R}|T'(t)|\leq2\pi k\sup_{t\in\R}|T(t)|.
\end{equation*}
Thus we have
\begin{align*}
|\sin^2(\pi t)\Fej^{(j)}(t)|
&\leq|\underbrace{\sin^2(\pi t)\Fej^{(j)}(t)+2\sin(\pi t)\cos(\pi t)\Fej^{(j-1)}(t)}_{\frac\d{\d t}(\sin^2(\pi t)\Fej^{(j-1)}(t))}|+|2\sin(\pi t)\cos(\pi t)\Fej^{(j-1)}(t)|\\
&\leq|2\sin(\pi t)\cos(\pi t)\Fej^{(j-1)}(t)|+2\pi(\fcut+2)\sup_{t\in\R}|\sin^2(\pi t)\Fej^{(j-1)}(t)|.
\end{align*}
For $t\geq\frac1{\fcut}$ we have $|2\sin(\pi t)\cos(\pi t)\Fej^{(j-1)}(t)|\lesssim\fcut|\sin^2(\pi t)\Fej^{(j-1)}(t)|$ so that overall
\begin{equation*}
|\sin^2(\pi t)\Fej^{(j)}(t)|
\lesssim\fcut\sup_{t\in\R}|\sin^2(\pi t)\Fej^{(j-1)}(t)|
\lesssim\fcut^{j-1}
\end{equation*}
as desired.
\end{proof}

\subsection{One-dimensional trigonometric polynomials with good localization}

Given points $t_1,\ldots,t_I\in\R/\Z$ with minimum distance at least $\Delta$ and signs $s_1,\ldots,s_I\in\{-1,0,1\}$,
we now aim to construct a trigonometric polynomial $g$ on $\R/\Z$ with $|g|\leq1$ and $g(t_i)=s_i$ for all $i$.
Such a construction can already be found in \cite{Candes2014}, but for the sake of self-containedness we briefly provide it in this section
(note that the construction in \cite{Candes2014} is based on the squared Fej\'er kernel, while we employ the Fej\'er kernel itself, but this is merely a matter of taste).
To this end, as in \cite{Candes2014}, we consider the system of equations
\begin{equation}\label{eqn:trigConditions}
g(t_i)=s_i,\qquad
g'(t_i)=0,\qquad
i=1,\ldots,I.
\end{equation}
The Fej\'er kernel $\Fej$ has a pronounced maximum at $0$ and quickly decays to zero away from $0$ (for $k\to\infty$ it approximates the Dirac measure).
Hence a basic idea is to take $g$ as linear combination of the shifted kernels $\Fej(t-t_i)$.
Note, though, that, while $\Fej(t-t_i)$ has a pronounced maximum at $t_i$, the other summands (though small) may shift the extremum slightly away from $t_i$.
As a remedy, one could perturb $t_i$ to some $\tilde t_i$; however, finding the correct $\tilde t_i$ is a highly nonlinear problem.
Instead we exploit $\Fej(t-\tilde t_i)\approx \Fej(t-t_i)+(\tilde t_i-t_i)\Fej'(t-t_i)$ and thus take the ansatz
(analogously to \cite{Candes2014})
\begin{equation}\label{eqn:trigAnsatz}
g(t)=\sum_{i=1}^I\alpha_i\Fej(t-t_i)+\beta_i\Fej'(t-t_i).
\end{equation}

\begin{lem}[Fej\'er coefficients]\label{thm:FejerCoefficients}
Let $g$ from \eqref{eqn:trigAnsatz} satisfy \eqref{eqn:trigConditions}.
Then its coefficients $\alpha=(\alpha_1,\ldots,\alpha_I)^T$ and $\beta=(\beta_1,\ldots,\beta_I)^T$ satisfy
\begin{equation*}
\underbrace{%
\left(\begin{smallmatrix}
\frac{D_0}{\fcut+1}&\frac{D_1}{\sqrt{2/3}\pi(\fcut+1)^2}\\\frac{-D_1}{\sqrt{2/3}\pi(\fcut+1)^2}&\frac{-D_2}{\frac23\pi^2(\fcut+1)^3}
\end{smallmatrix}\right)
}_M
\underbrace{%
\left(\begin{smallmatrix}(\fcut+1)\alpha\\\sqrt{2/3}\pi(\fcut+1)^2\beta\end{smallmatrix}\right)
}_V
=\underbrace{%
\left(\begin{smallmatrix}s_1\\\vdots\\s_I\\0\\\vdots\\0\end{smallmatrix}\right)
}_W
\qquad\text{for }
D_j=(\Fej^{(j)}(t_l-t_i))_{l,i=1,\ldots,I}.
\end{equation*}
Moreover, there exists $\bar C>0$ such that $\Delta\geq\frac{\bar C}{\fcut+1}$ implies the solvability of the equation with
\begin{equation*}
\|V-W\|_\infty\leq\frac{\bar C}{\Delta^2(\fcut+1)^2}.
\end{equation*}
\end{lem}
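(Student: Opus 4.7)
The plan is to derive the matrix identity directly from the ansatz and then prove invertibility and the error bound via a Neumann perturbation argument, showing that the rescaled matrix $M$ lies close to the identity.

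First I would substitute the ansatz \eqref{eqn:trigAnsatz} into the interpolation conditions \eqref{eqn:trigConditions} to obtain, for each $l = 1,\ldots,I$, the pair of equations
\begin{align*}
\sum_{i=1}^I \alpha_i \Fej(t_l-t_i) + \beta_i \Fej'(t_l-t_i) &= s_l,\\
\sum_{i=1}^I \alpha_i \Fej'(t_l-t_i) + \beta_i \Fej''(t_l-t_i) &= 0.
\end{align*}
Rescaling the unknowns according to $V=((\fcut+1)\alpha,\sqrt{2/3}\,\pi(\fcut+1)^2\beta)^T$ and dividing the top block of equations by $\fcut+1$ and the bottom block by $-\sqrt{2/3}\,\pi(\fcut+1)$ produces exactly the system $MV=W$ as stated.

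Next I would show that $M$ is a small perturbation of the identity. The diagonal entries of the four blocks of $M$ involve $\Fej^{(j)}(0)$ for $j=0,1,2$, which can be computed explicitly as
\begin{equation*}
\Fej(0)=\fcut+1,\qquad \Fej'(0)=0,\qquad \Fej''(0)=-\tfrac{2\pi^2}{3}\fcut(\fcut+1)(\fcut+2).
\end{equation*}
This gives diagonal entries $1,\,0,\,0,$ and $\tfrac{\fcut(\fcut+2)}{(\fcut+1)^2}=1-\tfrac{1}{(\fcut+1)^2}$, so the diagonal of $M-I$ has entries of magnitude at most $1/(\fcut+1)^2$. For the off-diagonal entries I would invoke \cref{thm:FejerDerivatives} in the form $|\Fej^{(j)}(t)|\lesssim(\fcut+1)^{j-1}/\sin^2(\pi t)$, together with the elementary estimate $\sin(\pi t)\gtrsim\dist(t,\Z)$ on $\R/\Z$. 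Since the $t_i$ have pairwise distance at least $\Delta$, for any fixed $l$ the remaining points lie at distances at least $\Delta,\Delta,2\Delta,2\Delta,\ldots$ on the torus, hence
\begin{equation*}
\sum_{i\neq l}\frac{1}{\sin^2(\pi(t_l-t_i))}\lesssim\sum_{k\geq1}\frac{1}{(k\Delta)^2}\lesssim\frac{1}{\Delta^2}.
\end{equation*}
After applying the block-specific scaling, the off-diagonal row sums in every block are bounded by a constant times $1/((\fcut+1)^2\Delta^2)$.

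Combining diagonal and off-diagonal contributions and using that $\Delta\leq1$ one obtains $\|M-I\|_\infty\lesssim 1/((\fcut+1)^2\Delta^2)$. Choosing $\bar C$ large enough so that $\Delta\geq\bar C/(\fcut+1)$ forces $\|M-I\|_\infty\leq\tfrac12$, at which point a Neumann series yields invertibility of $M$ together with $\|M^{-1}-I\|_\infty\leq2\|M-I\|_\infty$. Since $\|W\|_\infty\leq1$, this gives
\begin{equation*}
\|V-W\|_\infty=\|(M^{-1}-I)W\|_\infty\lesssim\frac{1}{(\fcut+1)^2\Delta^2},
\end{equation*}
which is the claimed estimate. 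The main obstacle is essentially the bookkeeping: checking that each of the four blocks of $M$ has the right diagonal after the somewhat asymmetric rescalings, and summing the off-diagonal terms uniformly in the number of points $I$. The perturbation step itself is standard once \cref{thm:FejerDerivatives} is available.
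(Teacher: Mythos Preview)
Your proof is correct and follows essentially the same route as the paper: derive the rescaled linear system, compute the diagonal entries of $M$ explicitly from $\Fej(0),\Fej'(0),\Fej''(0)$, bound the off-diagonal row sums via \cref{thm:FejerDerivatives} and $\sum_{i\neq l}\dist(t_l,t_i)^{-2}\lesssim\Delta^{-2}$, and then run a perturbation argument. The only cosmetic difference is that the paper splits $M=M_d+\tilde M$ and argues via diagonal dominance (obtaining $V-W=-M_d^{-1}\tilde MV$), whereas you perturb around the identity and invoke a Neumann series for $M^{-1}-I$; both yield the same bound.
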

\begin{proof}
First, it is straightforward to check that \eqref{eqn:trigConditions} is equivalent to
\begin{equation*}
\left(\begin{smallmatrix}
D_0&D_1\\D_1&D_2
\end{smallmatrix}\right)
\left(\begin{smallmatrix}\alpha\\\beta\end{smallmatrix}\right)
=W,
\end{equation*}
which in turn is equivalent to the given system $MV=W.$
Since $\Fej'$ is odd, $D_1$ is antisymmetric so that $M$ is actually symmetric.
We next show that $M$ is also diagonally dominant, so let $M_d$ be the diagonal and $\tilde M$ the rest of $M$,
\begin{equation*}
M=M_d+\tilde M
\qquad\text{with}\qquad
M_d=\left(\begin{smallmatrix}1\\&\ddots\\&&1\\&&&1-\frac1{(\fcut+1)^2}\\&&&&\ddots\\&&&&&1-\frac1{(\fcut+1)^2}\end{smallmatrix}\right).
\end{equation*}
We now estimate the row sum norm $\|\tilde M\|_\infty$ of $\tilde M$.
For $l\leq I$, using \cref{thm:FejerDerivatives} and $\sin^2(\pi t)\geq4t^2$ on $[-\frac12,\frac12]$ we have
\begin{multline*}
\sum_{j\neq l}|M_{lj}|
=\sum_{j\neq l,j\leq I}\left|\tfrac{(D_0)_{lj}}{\fcut+1}\right|
+\sum_{j=1}^I\left|\tfrac{(D_1)_{lj}}{\sqrt{2/3}\pi(\fcut+1)^2}\right|\\
\leq\sum_{j\neq l,j\leq I}\frac{C}{(\fcut+1)^24\dist(t_j,t_l)^2}
+\sum_{j\neq l,j\leq I}\frac{C\sqrt3}{\sqrt2\pi(\fcut+1)^24\dist(t_j,t_l)^2}.
\end{multline*}
Note that the distance of $t_l$ to the (left and right) nearest neighbours is at least $\dist(t_{l\pm1},t_l)\geq\Delta$.
Likewise, the distance to the two second-nearest neighbours must be at least $2\Delta$ and so on so that
\begin{equation*}
\sum_{j\neq l,j\leq I}\frac1{\dist(t_j,t_l)^2}
\leq2\sum_{i=1}^\infty\frac1{(i\Delta)^2}
=\frac{\pi^2}{3\Delta^2}.
\end{equation*}
Consequently, for some constant $\hat C>0$ we arrive at
\begin{align*}
\sum_{j\neq l}|M_{lj}|
&\leq\frac{\hat C}{(\fcut+1)^2\Delta^2}.
\end{align*}
Similarly, we can calculate
\begin{multline*}
\sum_{j\neq I+l}|M_{I+l,j}|
=\sum_{j=1}^{I}\left|\tfrac{(D_1)_{lj}}{\sqrt{2/3}\pi(\fcut+1)^2}\right|
+\sum_{j\neq l,j\leq I}\left|\tfrac{(D_2)_{lj}}{2\pi^2(\fcut+1)^3/3}\right|\\
\leq\sum_{j\neq l,j\leq I}\frac{C\sqrt3}{\sqrt2\pi(\fcut+1)^24\dist^2(t_j,t_l)}
+\sum_{j\neq l,j\leq I}\frac{3C}{2\pi^2(\fcut+1)^24\dist^2(t_j,t_l)}%
\leq\frac{\hat C}{(\fcut+1)^2\Delta^2}
\end{multline*}
if $\hat C$ was chosen large enough.
In summary, if $\Delta\geq\frac{2\sqrt{\hat C}}{\fcut+1}$, then $\|\tilde M\|_\infty\leq\frac14$ and thus $M$ is diagonally dominant.
As a direct consequence we obtain boundedness of $M^{-1}$,
\begin{equation*}
\|M^{-1}\|_\infty
\leq\left(\min_{i=1,\ldots,2I}(M_d)_{ii}-\|\tilde M\|_\infty\right)^{-1}
=\left(1-\frac1{(\fcut+1)^2}-\|\tilde M\|_\infty\right)^{-1}
\leq2,
\end{equation*}
so that the system of equations is uniquely solvable with
$\|V\|_\infty=\|M^{-1}W\|_\infty\leq\|M^{-1}\|_\infty\|W\|_\infty\leq2$.
From $M_dV-W=-\tilde MV$ we now obtain $V-W=V-M_d^{-1}W=-M_d^{-1}\tilde MV$ and thus
\begin{equation*}
\|V-W\|_\infty\leq\|M_d^{-1}\|_\infty\|\tilde M\|_\infty\|V\|_\infty\leq\frac{2\hat C}{(\fcut+1)^2\Delta^2}
\end{equation*}
as desired.
\end{proof}

Recall that $B_R(S)$ denotes the open $R$-neighbourhood of the set $S$.
Below we abbreviate $B_i=B_R(\{t_i\})$, $B=\bigcup_{i=1}^NB_i$, and $B^c=(\R/\Z)\setminus B$.
The estimates to be derived are illustrated in \cref{fig:trigPolynomialBounds}.

\begin{prop}[Existence of trigonometric polynomial]\label{thm:existencePolynomials}
There exist constants $C_1,C_2,C_3,C_4>0$
such that if $\Delta\geq\frac{C_1}{\fcut+1}$,
then there exists a unique $g$ of the form \eqref{eqn:trigAnsatz} satisfying
\begin{equation*}
|g|\leq1-\kappa R^2\text{ on }B^c,\qquad
s_ig(t)\leq1-\kappa\dist(t,t_i)^2\text{ on }B_i,\qquad
|g(t)-s_i|\leq\eta\dist(t,t_i)^2\text{ on }B_i
\end{equation*}
with $R=\frac{C_2}{\fcut+1}$, $\eta=C_3(\fcut+1)^2$, $\kappa=C_4(\fcut+1)^2$.
The last inequality implies in particular \eqref{eqn:trigConditions}.
\end{prop}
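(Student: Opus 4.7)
The plan is to follow the Cand\`es--Fern\'andez-Granda construction (see \cite{Candes2014}), adapted to the ansatz \eqref{eqn:trigAnsatz} built from $\Fej$ and $\Fej'$. First I would invoke \cref{thm:FejerCoefficients} with $C_1\geq\bar C$ to obtain the unique coefficients $\alpha,\beta$ for which $g$ satisfies \eqref{eqn:trigConditions}, together with the bounds $|(\fcut+1)\alpha_i-s_i|\lesssim 1/C_1^2$ and $|(\fcut+1)^2\beta_i|\lesssim 1/C_1^2$. Using \cref{thm:FejerDerivatives} and splitting each sum $\sum_i\alpha_i\Fej^{(j)}(t-t_i)+\beta_i\Fej^{(j+1)}(t-t_i)$ into a nearest-node contribution (bounded by $|\Fej^{(j)}|\lesssim(\fcut+1)^{j+1}$) plus the rest (bounded via the decay estimate together with $\sum_{i\neq j_0}1/\sin^2(\pi(t-t_i))\lesssim 1/\Delta^2\lesssim(\fcut+1)^2/C_1^2$, exactly as in the proof of \cref{thm:FejerCoefficients}), one obtains uniform derivative bounds $\|g^{(j)}\|_\infty\lesssim(\fcut+1)^j$ for $j=0,\ldots,3$, with constants depending only on $C_1$.

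The heart of the argument is a sharp negative-curvature estimate at each node. Since $\Fej'''(0)=0$ by parity and a direct computation gives $\Fej''(0)=-\tfrac{2\pi^2}{3}((\fcut+1)^3-(\fcut+1))$, the identity
\[
s_ig''(t_i)=s_i\alpha_i\Fej''(0)+\sum_{j\neq i}\bigl[s_i\alpha_j\Fej''(t_i-t_j)+s_i\beta_j\Fej'''(t_i-t_j)\bigr]
\]
has leading term $\approx -\tfrac{2\pi^2}{3}(\fcut+1)^2$ (since $s_i(\fcut+1)\alpha_i\geq 1-\bar C/C_1^2$), and the off-diagonal remainder is $O((\fcut+1)^2/C_1^2)$ by the same type of summation. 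For $C_1$ large we obtain $s_ig''(t_i)\leq -c_0(\fcut+1)^2$ for some absolute $c_0>0$. Combined with $\|g'''\|_\infty\lesssim(\fcut+1)^3$, a first-order Taylor expansion of $g''$ around $t_i$ extends this to $s_ig''(\xi)\leq-c_0(\fcut+1)^2/2$ for all $\xi\in B_{\bar R}(t_i)$, provided $\bar R=C_2'/(\fcut+1)$ with $C_2'$ small.

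Setting $R=C_2/(\fcut+1)\leq\bar R$ with $C_2$ small, Taylor's theorem $g(t)-s_i=\tfrac12g''(\xi)(t-t_i)^2$ on $B_{\bar R}(t_i)$ now yields simultaneously the second claim $s_ig(t)\leq 1-\kappa(t-t_i)^2$ with $\kappa=c_0(\fcut+1)^2/4$, the third claim $|g(t)-s_i|\leq\eta(t-t_i)^2$ with $\eta=\tfrac12\|g''\|_\infty\lesssim(\fcut+1)^2$, and the first claim on the annular region $B_{\bar R}(t_i)\setminus B_R(t_i)\subset B^c$ (the ``wrong-sign'' direction being trivial since $-s_ig(t)\leq -1+O(C_2'^2)$ is well below $1-\kappa R^2$). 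It remains to control $|g|$ on the ``far'' set where $|t-t_i|\geq\bar R$ for every $i$. Here I would combine the explicit bound $\Fej(\bar R)\leq(1-c'')(\fcut+1)$ for an absolute $c''>0$ (coming from $(\sin(\pi C_2')/(\pi C_2'))^2<1$) with the decay estimate of \cref{thm:FejerDerivatives} applied to the contributions from all non-nearest nodes, yielding $|g(t)|\leq 1-c''+O(C_2'^2)+O(1/C_1^2)$, which is $\leq 1-\kappa R^2=1-C_2^2 C_4$ once $C_4$ is chosen small. The main obstacle I expect is the bookkeeping on the four constants: they must be chosen in the order $C_1$ large (to tame the off-diagonal sums), $C_2',C_2$ small (to preserve negative curvature in the Taylor step), $C_3$ to absorb the uniform bound on $g''$ into $\eta$, and finally $C_4$ small (to reconcile the local parabolic bound with the global far-field bound), so that all inequalities hold simultaneously.
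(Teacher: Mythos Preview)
Your proposal is correct and follows essentially the same strategy as the paper: invoke \cref{thm:FejerCoefficients} for the coefficient bounds, derive uniform bounds on $g''$ and $g'''$ via \cref{thm:FejerDerivatives} with the nearest-node splitting, obtain the sharp negative curvature $-s_lg''(t_l)\gtrsim(\fcut+1)^2$ from $\Fej''(0)$ and the off-diagonal control, then Taylor-expand near each $t_l$ and estimate $\Fej$ directly on the far region. The only cosmetic difference is that you introduce an intermediate radius $\bar R\geq R$ and cover the annulus $B_{\bar R}\setminus B_R$ by the same Taylor argument, whereas the paper treats all of $B^c$ at once via $\Fej(R)\leq\Fej(0)+\Fej''(0)R^2$ (after imposing $R<\tfrac1{2(\fcut+1)}$ so that $\Fej<\Fej(R)$ outside $[-R,R]$, a monotonicity point you will also need for your $\Fej(\bar R)$ bound).
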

\begin{proof}
Take $g$ to be the unique solution to \eqref{eqn:trigConditions} from \cref{thm:FejerCoefficients}
and recall that its coefficients satisfy $|\alpha_i|\lesssim(\fcut+1)^{-1}$ and $|\beta_i|\lesssim(\fcut+1)^{-2}$
if $\Delta\geq\frac{C_1}{\fcut+1}$ for some $C_1>0$ chosen large enough.
Now consider an arbitrary $t\in[-\frac12,\frac12]$ and let $t_l$ be closest to $t$, then using \cref{thm:FejerDerivatives} we can calculate
\begin{align*}
|g''(t)|
&\leq\sum_{i=1}^I|\alpha_i||\Fej''(t-t_i)|+|\beta_i||\Fej'''(t-t_i)|\\
&=|\alpha_l||\Fej''(t-t_l)|+|\beta_l||\Fej'''(t-t_l)|+\sum_{i\neq l}|\alpha_i||\Fej''(t-t_i)|+|\beta_i||\Fej'''(t-t_i)|\\
&\lesssim(\fcut+1)^2+\sum_{i\neq l}\frac1{\sin^2(\pi(t-t_i))}.
\end{align*}
As in the proof of \cref{thm:FejerCoefficients} we can now estimate
\begin{equation*}
\sum_{i\neq l}\frac1{\sin^2(\pi(t-t_i))}
\lesssim\sum_{i\neq l}\frac1{\dist(t,t_i)}
\lesssim\sum_{j=1}^\infty\frac1{(j\Delta)^2}
\lesssim\Delta^{-2}
\end{equation*}
so that for some constant $\hat C>0$ we have
\begin{equation*}
|g''(t)|
\leq\hat C\left((\fcut+1)^2+\tfrac1{\Delta^2}\right)
\leq\hat C\left(1+\tfrac1{C_1^2}\right)(\fcut+1)^2.
\end{equation*}
Consequently we can choose $C_3=\tfrac{\hat C}2\left(1+\tfrac1{C_1^2}\right)$ in order to satisfy the inequality involving $\eta$.
Analogously one obtains (after potentially increasing $\hat C$)
\begin{equation*}
|g'''(t)|
\leq\hat C\left(1+\tfrac1{C_1^2}\right)(\fcut+1)^3,
\end{equation*}
which we will need next to estimate $g''$ from below in $B_l$ in order to derive the inequality involving $\kappa$.
To this end assume $s_l\neq0$ (else the inequality involving $\kappa$ would already follow from the one involving $\eta$ if $C_2$ and thus $R$ is chosen small enough).
Using $|(\fcut+1)\alpha_l-s_l|\leq\frac{\bar C}{\Delta^2(\fcut+1)^2}\leq\frac{\bar C}{C_1^2}$ from \cref{thm:FejerCoefficients} we then estimate
\begin{multline*}
-s_lg''(t_l)
\geq-s_l\alpha_l\Fej''(0)-\sum_{i\neq l}(|\alpha_i||\Fej''(t_l-t_i)|+|\beta_i||\Fej'''(t_l-t_i)|)\\
\geq\left(1-\tfrac{\bar C}{C_1^2}\right)\tfrac23\pi^2\fcut(\fcut+2)-\sum_{i\neq l}\tfrac{\tilde C}{\sin^2(\pi(t-t_i))}
\geq\left(1-\tfrac{\bar C}{C_1^2}\right)(\fcut+1)^2-\tfrac{\check C}{\Delta^2}
\geq\left(1-\tfrac{\bar C+\check C}{C_1^2}\right)(\fcut+1)^2
\end{multline*}
for some constants $\tilde C,\check C>0$. Therefore, for any $r$ with $\dist(r,t_l)\leq\frac{C_2}{\fcut+1}$ with a constant $C_2>0$ to be fixed below, we obtain
\begin{multline*}
-s_lg''(r)
\geq\left(1-\tfrac{\bar C+\check C}{C_1^2}\right)(\fcut+1)^2-\max_{s\in[-\frac12,\frac12]}|g'''(s)|\dist(s,t_l)\\
\geq\left(1-\tfrac{\bar C+\check C}{C_1^2}\right)(\fcut+1)^2-\hat C\left(1+\tfrac1{C_1^2}\right)(\fcut+1)^3\dist(s,t_l)
\geq2C_4(\fcut+1)^2,
\end{multline*}
where $C_4=\left(1-\tfrac{\bar C+\check C}{C_1^2}\right)/2-C_2\hat C\left(1+\tfrac1{C_1^2}\right)/2$ is strictly positive
if $C_1$ is chosen large and $C_2$ small enough.
For this choice of $C_1,C_2,C_4$ and $R=\frac{C_2}{\fcut+1}$, $\kappa=C_4(\fcut+1)^2$ the inequality involving $\kappa$ then holds true.
Finally let us ensure the inequality on $B^c$.
To this end assume $t\in B^c$ and again that $t_l$ is closest to $t$.
Furthermore, we may without loss of generality assume $R<\Delta/2$, $\Fej(R)\leq \Fej(0)+\Fej''(0)R^2$, and $R<\frac1{2(\fcut+1)}$ so that $\Fej<\Fej(R)$ outside $[-R,R]$
(else we just sufficiently decrease $C_2$ further).
Furthermore, we recall from \cref{thm:FejerCoefficients} that $|\beta_l|\leq\frac{\bar C}{C_1^2}$.
Analogously to the previous calculation we can now estimate
\begin{align*}
|g(t)|
&\leq\sum_{i=1}^I|\alpha_i|\Fej(t-t_i)+|\beta_i||\Fej'(t-t_i)|\\
&\leq|\alpha_l|\Fej(R)+|\beta_l||\Fej'(t-t_l)|+\sum_{i\neq l}|\alpha_i|\Fej(t-t_i)+|\beta_i||\Fej'(t-t_i)|\\
&\leq(1+\tfrac{\bar C}{C_1^2})\frac{\Fej(R)}{\fcut+1}+|\beta_l||\Fej'(t-t_l)|+\sum_{i\neq l}\frac{\tilde C}{(\fcut+1)^2\sin^2(\pi(t-t_i))}\\
&\leq(1+\tfrac{\bar C}{C_1^2})\frac{\Fej(R)}{\fcut+1}+\frac{C\bar C}{C_1^2\sin^2(\pi R)}+\frac{\check C}{(\fcut+1)^2\Delta^2}\\
&\leq(1+\tfrac{\bar C}{C_1^2})\frac{\Fej(0)+\Fej''(0)R^2}{\fcut+1}+\frac{\underline C}{C_1^2}\\
&\leq(1+\tfrac{\bar C}{C_1^2})(1-\tfrac23\pi^2\fcut(\fcut+2)R^2)+\frac{\underline C}{C_1^2}
\end{align*}
for constants $\tilde C,\check C,\underline C>0$.
As required, this is smaller than $1-\kappa R^2$ if $C_4$ is chosen small and $C_1$ large enough.
\end{proof}

\begin{figure}
\centering
\includegraphics{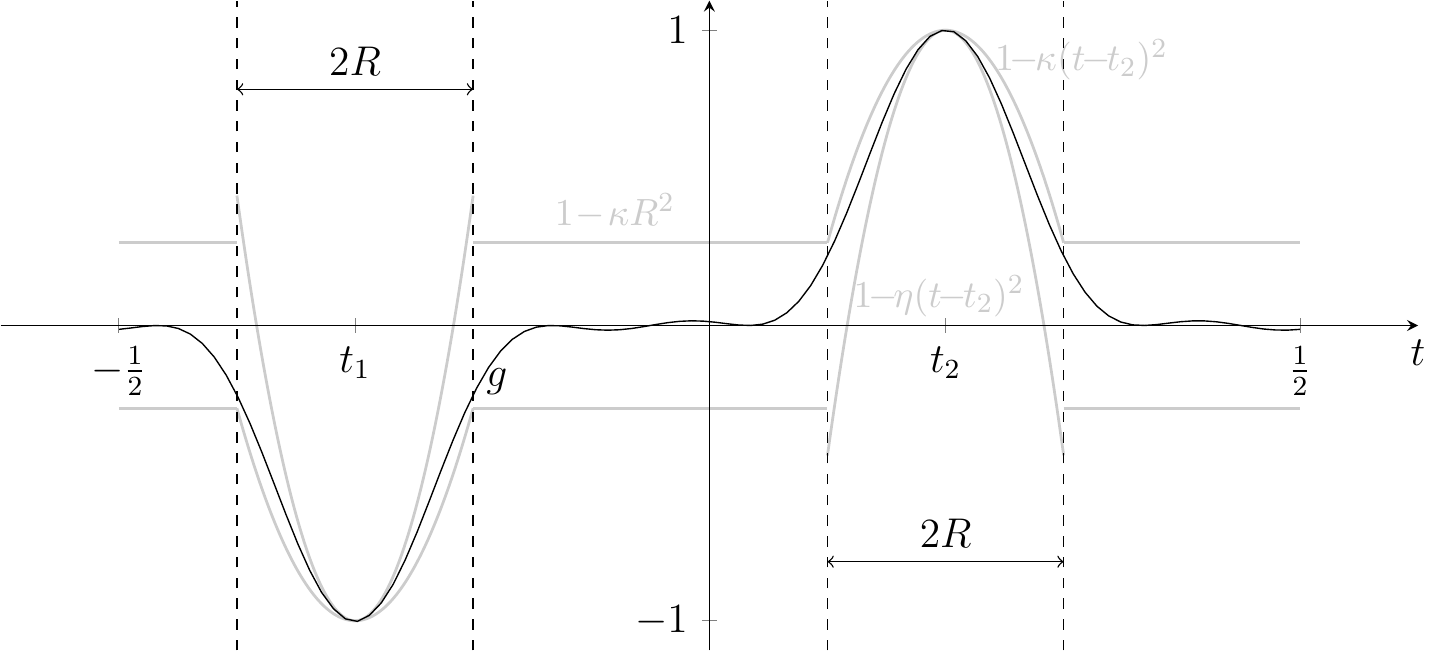}
\caption{Illustration of the bounds obtained in \cref{thm:existencePolynomials}.}
\label{fig:trigPolynomialBounds}
\end{figure}

\subsection{Approximation of characteristic functions with the Fej\'er kernel}
Given $a\in[0,1]$, we approximate the characteristic function $\chi_{[0,a]}:\R/\Z\to\R$ by
\begin{equation*}
\cha_{[0,a]}(t)
=\int_0^a\Fej(t-x)\wrt x.
\end{equation*}
An approximation of the characteristic function $\chi_{[a,b]}$ for $a,b\in\R/\Z$ is then given by
\begin{equation*}
\cha_{[a,b]}(t)=\cha_{[0,\length([a,b])]}(t-a).
\end{equation*}
Note that by construction we have $0\leq\cha_{[a,b]}\leq1$ as well as $\cha_{[a,b]^c}=1-\cha_{[a,b]}$.

\begin{lem}[Properties of approximate characteristic function]\label{thm:charFun}
\notinclude{
The approximate characteristic function satisfies
\begin{equation*}
\max\{0,1-\tfrac{3\pi}{2(\fcut+1)\dist(t,[a,b]^c)}\}
\leq \cha_{[a,b]}(t)
\leq\min\{1,\tfrac{3\pi}{2(\fcut+1)\dist(t,[a,b])}\}
\end{equation*}
as well as
\begin{equation*}
\length([a,b])>\int_a^b\cha_{[a,b]}(t)\wrt t>\tfrac{13}{20}\length([a,b])
\qquad\text{as long as }\length([a,b])\geq\frac1{\fcut+1}.
\end{equation*}
\todo{first part does not seem to be used...}
}%
As soon as $l_a^b=\length([a,b])>\frac1{\fcut+1}$, the approximate characteristic function $\cha_{[a,b]}$ satisfies
\begin{equation*}
l_a^b>\int_a^b\cha_{[a,b]}(t)\wrt t>\max\left\{\frac{13}{20},1-\frac{5+\log(2\pi(\fcut+1)l_a^b)}{\pi^2(\fcut+1)l_a^b}\right\}l_a^b.
\end{equation*}
\end{lem}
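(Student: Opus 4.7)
My approach would be Parseval's theorem on $\R/\Z$. Since $\cha_{[a,b]}=\chi_{[a,b]}\ast\Fej$, the Fourier coefficients of $\cha_{[a,b]}$ equal those of $\chi_{[a,b]}$ multiplied by the Fej\'er multipliers $1-|k|/(\fcut+1)$, and $|\hat\chi_{[a,b]}(k)|^2=\sin^2(\pi k l_a^b)/(\pi^2 k^2)$ for $k\neq 0$. Applying Parseval to $\langle\chi_{[a,b]},\cha_{[a,b]}\rangle_{L^2}$ together with $\|\chi_{[a,b]}\|_{L^2}^2=l_a^b$ would yield
\[
l_a^b-\int_a^b\cha_{[a,b]}(t)\wrt t=\frac{2}{\pi^2}\sum_{k=\fcut+1}^{\infty}\frac{\sin^2(\pi k l_a^b)}{k^2}+\frac{2}{\pi^2(\fcut+1)}\sum_{k=1}^{\fcut}\frac{\sin^2(\pi k l_a^b)}{k}.
\]
The strict upper bound $\int_a^b\cha_{[a,b]}<l_a^b$ follows from $\int_{\R/\Z}\cha_{[a,b]}=l_a^b$, $\cha_{[a,b]}\geq 0$, and $\cha_{[a,b]}>0$ on a positive-measure subset of $[a,b]^c$.

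To extract the logarithmic bound, I would split $\sin^2(\pi kl)=(1-\cos(2\pi kl))/2$ and use the closed form $\sum_{k=1}^{\infty}\cos(2\pi kl)/k=-\log(2\sin(\pi l))$ (the real part of $-\log(1-\e^{2\pi\ii l})$) together with Dirichlet's test $|\sum_{k=1}^{n}\cos(2\pi kl)|\leq 1/\sin(\pi l)$. Summation by parts then yields $|\sum_{k>\fcut}\cos(2\pi kl)/k^2|\lesssim 1/((\fcut+1)^2\sin(\pi l))$ and $\sum_{k=1}^{\fcut}\cos(2\pi kl)/k=-\log(2\sin(\pi l))+\epsilon$ with $|\epsilon|\lesssim 1/((\fcut+1)\sin(\pi l))$. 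Combining these with $\sum_{k=1}^{\fcut}1/k\leq 1+\log\fcut$, $\sum_{k>\fcut}1/k^2\leq 2/(\fcut+1)$, the hypothesis-driven bound $\sin(\pi l_a^b)\geq 2l_a^b\geq 2/(\fcut+1)$, and $\log(2\sin(\pi l_a^b))\leq\log(2\pi l_a^b)$ should produce
\[
l_a^b-\int_a^b\cha_{[a,b]}\leq\frac{5+\log(2\pi\fcut l_a^b)}{\pi^2(\fcut+1)}\leq\frac{5+\log(2\pi(\fcut+1)l_a^b)}{\pi^2(\fcut+1)}.
\]

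For the constant lower bound $\int_a^b\cha_{[a,b]}>\tfrac{13}{20}l_a^b$, note that the logarithmic bound above already implies this whenever $(\fcut+1)l_a^b$ exceeds roughly $3$. In the remaining regime $1<(\fcut+1)l_a^b\lesssim 3$, I would verify $\tfrac{13}{20}$ by a direct estimate---for instance, by using that $\cha_{[a,b]}(t)\geq\int_{-l_a^b/2}^{l_a^b/2}\Fej(s)\wrt s$ for $t$ in the middle of $[a,b]$ and lower-bounding the central mass of $\Fej$ over a window of length $\geq 1/(\fcut+1)$, or directly from the Parseval formula (whose limiting value as $\fcut\to\infty$ with $l_a^b=1/(\fcut+1)$ is just above $\tfrac{13}{20}$, so only finitely many configurations need explicit checking).

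The main obstacle is bookkeeping the constants so that the additive $5$ (rather than a na\"ive $6$ or $7$) and the correct logarithmic argument $2\pi(\fcut+1)l_a^b$ both fall out. Crucially, one must apply Abel/Dirichlet summation not only to $\sum_{k=1}^{\fcut}\cos(2\pi kl)/k$ but also to the tail $\sum_{k>\fcut}\cos(2\pi kl)/k^2$, improving the tail contribution from the na\"ive $\tfrac{2}{\pi^2\fcut}$ (using $\sin^2\leq 1$) to $\tfrac{3}{\pi^2(\fcut+1)}$; without this sharpening, the final additive constant would exceed $5$.
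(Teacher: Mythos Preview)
Your Parseval route is correct and genuinely different from the paper's argument. The paper works entirely on the real side: it lower-bounds $\Fej(t)$ by $\frac{1}{\fcut+1}\bigl(\frac{\sin((\fcut+1)\pi t)}{\pi t}\bigr)^2$ (using $|\sin(\pi t)|\leq\pi|t|$), evaluates the resulting double integral $\int_0^a\int_0^a$ in closed form via the sine and cosine integrals $\Si,\Cin$, and obtains $\int_0^a\cha_{[0,a]}\geq\tfrac{2a}{\pi}f(r)$ for a single-variable function $f(r)=\Si(r)+(\cos r-\Cin r-1)/r$ of $r=2\pi(\fcut+1)a$. The logarithmic bound then follows from the asymptotics $\tfrac\pi2-f(r)\leq(4+\gamma+\log r)/r$, and the constant $\tfrac{13}{20}$ from the monotonicity $f'(r)=\Cin(r)/r^2>0$ together with the single numerical check $\tfrac2\pi f(2\pi)>\tfrac{13}{20}$. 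Your Fourier-side approach instead produces an \emph{exact} identity for the deficit, which is conceptually cleaner and gets the logarithmic bound by essentially the same special-function identities ($\sum_{k\geq1}\cos(2\pi kl)/k=-\log(2\sin\pi l)$ is the Fourier-series avatar of $\Cin$).

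Two small points to tidy. First, your inequality $\sin(\pi l_a^b)\geq 2l_a^b$ fails for $l_a^b>\tfrac12$; this is harmless because your Parseval deficit depends on $l$ only through $\sin^2(\pi k l)=\sin^2(\pi k(1-l))$, so you may assume $l\leq\tfrac12$ without loss of generality (and the target bound only improves with larger $l$). Second, your handling of the constant $\tfrac{13}{20}$ in the narrow regime $1<(\fcut+1)l\lesssim3$ is not yet a proof: your two sums do not depend only on the product $(\fcut+1)l$, so ``finitely many configurations'' is not literally available. This is precisely where the paper's sinc$^2$ lower bound pays off---it collapses the two-parameter problem to the single variable $r$, after which monotonicity plus one evaluation at $r=2\pi$ suffices. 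You could either borrow that one step, or supply a uniform-in-$\fcut$ estimate for your sums (e.g.\ compare the finite sums to their Riemann-integral limits and control the discrepancy).
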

\begin{proof}
It suffices to consider the case $\cha_{[0,a]}$ with $a\in[0,1]$.
\notinclude{
By construction, $\cha_{[0,a]}\geq0$ as well as $\cha_{[0,a]}\leq 1$.
In fact, due to $1-\cha_{[0,a]}(t)=\int_a^1\Fej(t-x)\wrt x$, any estimate obtained for $\cha_{[0,a]}$ can also be transferred to one for $1-\cha_{[0,a]}$.
Note that for $|t|\leq\frac12$ and using $\sinc(t)=\frac{\sin t}t$ we have
\begin{equation*}
\Fej(t)
\leq\frac1{\fcut+1}\left(\frac{\sin((\fcut+1)\pi t)}{2t}\right)^2
=\frac{\pi^2}4(\fcut+1)\sinc^2((\fcut+1)\pi t).
\end{equation*}
Therefore, for $t\in[\frac{a+1}2,1]$ we have
\begin{multline*}
\cha_{[0,a]}(t)
\leq\tfrac{\pi^2}4(\fcut+1)\int_0^a\sinc^2((\fcut+1)\pi\dist(t,x))\wrt x\\
=\tfrac{\pi^2}4(\fcut+1)\begin{cases}
\int_0^a\sinc^2((\fcut+1)\pi(x-t+1))&\text{if }a\leq t-\tfrac12\\
\int_0^{t-\frac12}\sinc^2((\fcut+1)\pi(x-t+1))
+\int_{t-\frac12}^a\sinc^2((\fcut+1)\pi(x-t))&\text{else.}
\end{cases}
\end{multline*}
Let us estimate the integrals. For $s=a$ or $s=t-\frac12$ we have
\begin{equation*}
(\fcut+1)\int_0^s\sinc^2((\fcut+1)\pi(x-t+1))\wrt x
=\int_{(\fcut+1)\pi(1-t)}^{(\fcut+1)\pi(1+s-t)}\sinc^2 y\wrt y
=\left[\tfrac{\cos(2y)-1}{2y}+\Si(y)\right]_{y=(\fcut+1)\pi(1-t)}^{(\fcut+1)\pi(1+s-t)}.
\end{equation*}
Using that the sine integral function $\Si=\int\sin(t)/t\wrt t$ satisfies $|\Si(t)-\frac\pi2|\leq\frac1{t}$ for $t>0$,
the term involving $\Si$ can be estimated as
\begin{multline*}
\Si((\fcut+1)\pi(1+s-t))-\Si((\fcut+1)\pi(1-t))
\leq|\Si((\fcut+1)\pi(1+s-t))-\tfrac\pi2|+|\Si((\fcut+1)\pi(1-t))-\tfrac\pi2|\\
\leq2\max_{\tau\geq1-t}|\Si((\fcut+1)\pi\tau)-\tfrac\pi2|
\leq\tfrac2{\pi(\fcut+1)(1-t)},
\end{multline*}
while the other term can be estimated as
\begin{equation*}
\left[\tfrac{\cos(2y)-1}{2y}\right]_{y=(\fcut+1)\pi(1-t)}^{(\fcut+1)\pi(1+s-t)}
\leq\tfrac1{\pi(\fcut+1)(1-t)},
\end{equation*}
simply ignoring the upper bound since its contribution is negative.
Similarly we calculate
\begin{equation*}
(\fcut+1)\int_{t-\frac12}^a\sinc^2((\fcut+1)\pi(x-t))\wrt x
=\int_{-(\fcut+1)\frac\pi2}^{(\fcut+1)\pi(a-t)}\sinc^2 y\wrt y
=\left[\tfrac{\cos(2y)-1}{2y}+\Si(y)\right]_{y=-(\fcut+1)\frac\pi2}^{(\fcut+1)\pi(a-t)},
\end{equation*}
where this time the sine integral term can be bounded as
\begin{multline*}
\Si((\fcut+1)\pi(a-t))-\Si(-(\fcut+1)\tfrac\pi2)
\leq|\Si((\fcut+1)\pi(a-t))+\tfrac\pi2|+|\Si(-(\fcut+1)\tfrac\pi2)+\tfrac\pi2|\\
\leq2\max_{\tau\leq a-t}|\Si((\fcut+1)\pi\tau)+\tfrac\pi2|
\leq\tfrac2{\pi(\fcut+1)|a-t|}
\end{multline*}
and the remaining term as
\begin{equation*}
\left[\tfrac{\cos(2y)-1}{2y}\right]_{y=-(\fcut+1)\frac\pi2}^{(\fcut+1)\pi(a-t)}
\leq\tfrac1{\pi(\fcut+1)|a-t|},
\end{equation*}
this time ignoring the negative contribution from the lower bound.
Together we get
\begin{equation*}
\cha_{[0,a]}(t)
\leq\tfrac{3\pi}{4(\fcut+1)}\left(\tfrac1{1-t}+\tfrac1{t-a}\right)
\leq\tfrac{3\pi}{2(\fcut+1)}\tfrac1{\dist(t,[0,a])}.
\end{equation*}
Summarizing, we obtain the first statement.
}%
\notinclude{
Furthermore, for $t\in[\frac a4,\frac a2]$ (i.e. at the center of the approximated characteristic function and away from the interface) we write
\begin{multline*}
\cha_{[0,a]}(t)-\cha_{[0,a]}(\tfrac a2)
=\int_0^a\Fej(t-x)-\Fej(\tfrac a2-x)\wrt x
=\int_{t-a}^t\Fej(y)\wrt y-\int_{\frac a2-a}^{\frac a2}\Fej(y)\wrt y\\
=\int_{[t-a,-\frac a2]}\Fej(y)\wrt y-\int_{[t,\frac a2]}\Fej(y)\wrt y
=\int_{[\frac a2,a-t]}\Fej(y)\wrt y-\int_{[t,\frac a2]}\Fej(y)\wrt y
\end{multline*}
using the symmetry of $\Fej$ in the last step.
The Hessian is then readily computed as
\begin{equation*}
(\cha_{[0,a]}(t)-\cha_{[0,a]}(\tfrac a2))''
=\Fej'(t)+\Fej'(a-t),
\end{equation*}
while $(\cha_{[0,a]}(t)-\cha_{[0,a]}(\tfrac a2))'|_{t=\frac a2}=0$ due to symmetry.
Hence we get
\begin{equation*}
|\cha_{[0,a]}(t)-\cha_{[0,a]}(\tfrac a2)|
\leq\max_{\tau\in[t,\frac a2]}\frac12|\Fej'(\tau)+\Fej'(a-\tau)|(t-\tfrac a2)^2
\leq\max_{\tau\in[\frac a4,\frac a2]}|\Fej'(\tau)|(t-\tfrac a2)^2.
\end{equation*}
Using $\Fej'(t)=\frac{\pi\sin(\pi(\fcut+1)t)}{(\fcut+1)\sin^3(\pi t)}(\fcut\sin(\pi(\fcut+2)t)-(\fcut+2)\sin(\pi\fcut t))$ we see $|\Fej'(t)|\leq\frac{2\pi}{|\sin(\pi t)|^3}$ and thus
\begin{equation*}
|\cha_{[0,a]}(t)-\cha_{[0,a]}(\tfrac a2)|
\leq\frac{2\pi}{|\sin(\pi\frac a4)|^3}(t-\tfrac a2)^2.
\end{equation*}
}%
The left inequality follows from $\cha_{[0,a]}\leq1$.
For the right one we recall the definition of the sine and cosine integrals
\begin{equation*}
\Si(s)=\int_0^s\frac{\sin t}t\wrt t
\qquad\text{and}\qquad
\Cin(s)=\int_0^s\frac{1-\cos t}t\wrt t
\end{equation*}
(note that the more common cosine integral $\Ci$ is defined as $\Ci(s)\notinclude{=-\int_s^\infty\frac{1-\cos t}t\wrt t}=\gamma+\log(s)-\Cin(s)$ for $\gamma$ the Euler--Mascheroni constant) and calculate
\begin{align*}
\int_0^a\cha_{[0,a]}(t)\wrt t
&=\int_0^a\int_0^a\Fej(t-x)\wrt x\wrt t
\geq\int_0^a\int_0^a\frac1{\fcut+1}\left(\frac{\sin((\fcut+1)\pi(t-x))}{\pi(t-x)}\right)^2\wrt x\wrt t\\
\notinclude{
&=\frac{\Si(2\pi(\fcut+1)x)x+\Si(2\pi(\fcut+1)t)t-\Si(2\pi(\fcut+1)(x-t))(x-t)}\pi\\
&\quad+\frac1{2\pi^2(\fcut+1)}\bigg(\cos(2\pi(\fcut+1)x)+\cos(2\pi(\fcut+1)t)-\cos(2\pi(\fcut+1)(x-t))-1-\gamma\\
&\quad+\Ci(2\pi(\fcut+1)x)+\Ci(2\pi(\fcut+1)t)-\Ci(2\pi(\fcut+1)(x-t))+\log\frac{x-t}{2\pi(\fcut+1)xt}\bigg)\bigg|_{\substack{x=a,\\t=a}}\\
}%
&=\frac{\Si(2\pi(\fcut+1)x)x+\Si(2\pi(\fcut+1)t)t-\Si(2\pi(\fcut+1)(x-t))(x-t)}\pi\\
&\quad+\frac1{2\pi^2(\fcut+1)}\bigg(\cos(2\pi(\fcut+1)x)+\cos(2\pi(\fcut+1)t)-\cos(2\pi(\fcut+1)(x-t))-1\\
&\quad-\Cin(2\pi(\fcut+1)x)-\Cin(2\pi(\fcut+1)t)+\Cin(2\pi(\fcut+1)(x-t))\bigg)\bigg|_{x=0}^a\bigg|_{t=0}^a\\
&=\frac{2a}\pi f(2\pi(\fcut+1)a)\\
\text{for }f(r)&=\Si(r)+\frac{\cos(r)-\Cin(r)-1}{r}.
\end{align*}
The function $f$ is monotonically increasing on the positive halfline due to $f'(r)=\Cin(r)/r^2>0$.
Thus, for $a\geq\frac1{\fcut+1}$ we get
$
\int_0^a\cha_{[0,a]}(t)\wrt t
\geq\frac{2a}\pi f(2\pi)
>\frac{13a}{20}.
$
In addition, $f(r)$ converges to $\frac\pi2$ as $r\to\infty$ with
\begin{equation*}
0\leq\frac\pi2-f(r)
=\left(\frac\pi2-\Si(r)\right)+\frac{1-\cos(r)+\gamma-\Ci(r)+\log(r)}r
\leq\frac1r+\frac{3+\gamma+\log(r)}r
\end{equation*}
for $r\geq1$, which finally yields the desired estimate.
\notinclude{
For $a\geq\frac12$ we then simply estimate
\begin{multline*}
\int_0^a\cha_{[0,a]}(t)\wrt t
=\int_0^a\cha_{[0,a/2]}(t)\wrt t+\int_0^a\cha_{[a/2,a]}(t)\wrt t\\
\geq\int_0^{\frac a2}\cha_{[0,a/2]}(t)\wrt t+\int_{\frac a2}^a\cha_{[a/2,a]}(t)\wrt t
=2\int_0^{\frac a2}\cha_{[0,a/2]}(t)\wrt t
>\frac{13a}{20}.
\qedhere
\end{multline*}
}%
\end{proof}

\notinclude{
The bounds on $\cha_{[a,b]}$ are illustrated in \cref{fig:DualFcn1D}.

\begin{figure}
\centering
\setlength\unitlength{.2\linewidth}
\begin{picture}(2,1)
\put(0,0){\includegraphics[width=2\unitlength,height=\unitlength,trim=66 50 10 10,clip]{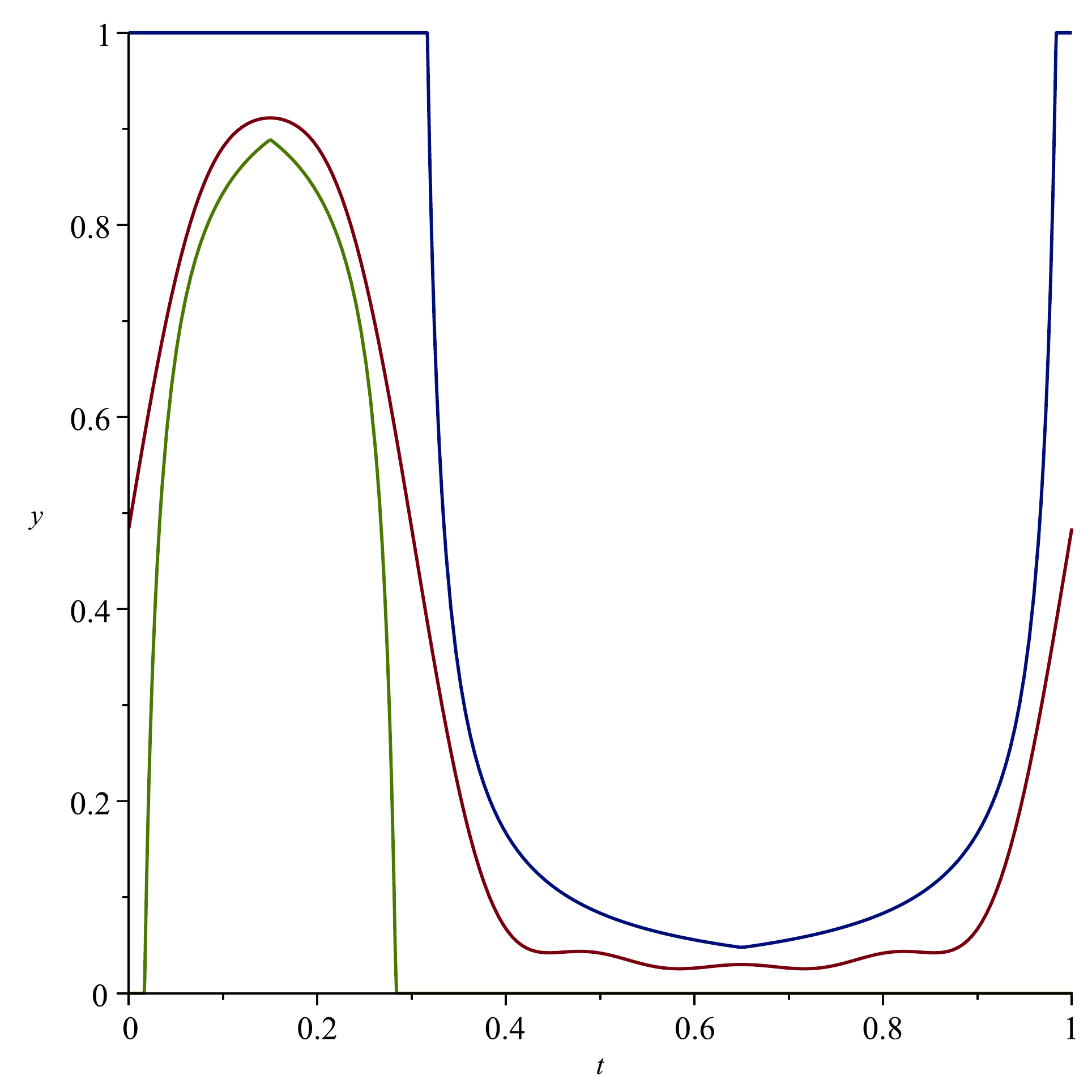}}
\put(0,0){\vector(1,0){2.1}}
\put(0,0){\vector(0,1){1.1}}
\put(-.03,-.1){0}
\put(1.98,-.1){1}
\put(-.1,-.04){0}
\put(-.1,.96){1}
\end{picture}
\caption{The function $\cha_{[0,a]}$ for $\fcut=5$ and $a=\frac3{10}$ and a sketch of its upper and lower bounds.}
\label{fig:DualFcn1D}
\end{figure}
}%

\notinclude{
\subsection{Linear combinations of Fej\'er kernels}
We first construct a function $g_{\bar k}$ that is one in $x_{\bar k}$, zero in $x_k$ for $k\neq\bar k$ and has zero derivative in all these points.
Similarly to \cite{candes} we construct it as a linear combination of Fej\'er kernels and their derivatives (instead of repositioning the kernels).
Let $D_0=(\Fej(x_l-x_k))_{l,k}$, $D_1=(\Fej'(x_l-x_k))_{l,k}$, $D_2=(\Fej''(x_l-x_k))_{l,k}$
and let $g_{\bar k}(x)=\sum_{k=1}^m\alpha_{k}^{\bar k}\Fej(x-x_{k})+\beta_{k}^{\bar k}\Fej'(x-x_{k})$, then we require
\begin{equation*}
\left(\begin{smallmatrix}
D_0&D_1\\D_1&D_2
\end{smallmatrix}\right)
\left(\begin{smallmatrix}\alpha^{\bar k}\\\beta^{\bar k}\end{smallmatrix}\right)
=\left(\begin{smallmatrix}1\\0\\\vdots\\0\end{smallmatrix}\right)
\text{ or equivalently }
\underbrace{%
\left(\begin{smallmatrix}
\frac{D_0}{\fcut+1}&\frac{D_1}{\sqrt2\pi(\fcut+1)^2}\\\frac{-D_1}{\sqrt2\pi(\fcut+1)^2}&\frac{-D_2}{2\pi^2(\fcut+1)^3}
\end{smallmatrix}\right)
}_M
\left(\begin{smallmatrix}(\fcut+1)\alpha^{\bar k}\\\sqrt2\pi(\fcut+1)^2\beta^{\bar k}\end{smallmatrix}\right)
=\left(\begin{smallmatrix}1\\0\\\vdots\\0\end{smallmatrix}\right)
\end{equation*}
where without loss of generality we assumed $\bar k=1$.
Note that $M$ is symmetric.
We show that it is diagonally dominant and thus invertible.
Indeed, denoting by $\tilde M$ the matrix $M$ with the diagonal set to zero, we have
\begin{equation*}
M=\left(\begin{smallmatrix}1\\&\ddots\\&&1\\&&&1-\frac1{(\fcut+1)^2}\\&&&&\ddots\\&&&&&1-\frac1{(\fcut+1)^2}\end{smallmatrix}\right)+\tilde M.
\end{equation*}
Let us compute $|\tilde M|_\infty$, the matrix norm induced by $|\cdot|_\infty$.
For $k\leq m$ we have
\begin{multline*}
\sum_{j\neq k}|M_{kj}|
=\sum_{j\neq k,j\leq m}|\tfrac{(D_0)_{kj}}{\fcut+1}|
+\sum_{j=1}^{m}|\tfrac{(D_1)_{kj}}{\sqrt2\pi(\fcut+1)^2}|
\leq\sum_{j\neq k,j\leq m}\frac{1}{4(\fcut+1)^2|x_j-x_k|^2}
+\sum_{j\neq k,j\leq m}\frac{3}{4\sqrt2(\fcut+1)^2|x_j-x_k|^2}\\
\leq\frac1{2(\fcut+1)^2\Delta^2}\sum_{j=1}^{1/2\Delta}\frac{1}{j^2}
+\frac{3}{2\sqrt2(\fcut+1)^2\Delta^2}\sum_{j=1}^{1/2\Delta}\frac{1}{j^2}
\leq\frac{3}{(\fcut+1)^2\Delta^2}
\end{multline*}
as well as
\begin{multline*}
\sum_{j\neq m+k}|M_{m+k,j}|
=\sum_{j=1}^{m}|\tfrac{(D_1)_{kj}}{\sqrt2\pi(\fcut+1)^2}|
+\sum_{j\neq k,j\leq m}|\tfrac{(D_2)_{kj}}{2\pi^2(\fcut+1)^3}|
\leq\sum_{j\neq k,j\leq m}\frac{3}{4\sqrt2(\fcut+1)^2|x_j-x_k|^2}\\
+\sum_{j\neq k,j\leq m}\frac{5}{8\pi(\fcut+1)^2|x_j-x_k|^2}
\leq\frac{3}{2\sqrt2(\fcut+1)^2\Delta^2}\sum_{j=1}^{1/2\Delta}\frac{1}{j^2}
+\frac{5}{4\pi(\fcut+1)^2\Delta^2}\sum_{j=1}^{1/2\Delta}\frac{1}{j^2}
\leq\frac{3}{(\fcut+1)^2\Delta^2}.
\end{multline*}
Summarizing, $|\tilde M|_\infty\leq\frac{3}{(\fcut+1)^2\Delta^2}\leq\frac13$ if $\Delta\geq\frac{3}{\fcut+1}$ (since $\Delta\leq1$, this imposes the condition $\fcut\geq2$)
so that $M$ is indeed diagonally dominant. Moreover,
\begin{equation*}
|M^{-1}|_\infty
\leq\left(1-\frac1{(\fcut+1)^2}-|\tilde M|_\infty\right)^{-1}
\leq3.
\end{equation*}

We next show boundedness of $\sum_{k=1}^m|g_k|$ and $\sum_{k=1}^m|g_k''|$,
for which we require the boundedness of $A_k\equiv\sum_{\bar k=1}^m|\alpha_k^{\bar k}|$ and $B_k\equiv\sum_{\bar k=1}^m|\beta_k^{\bar k}|$.
Let $s_k^{\bar k}\in\{-1,1\}$ denote the sign of $\alpha_k^{\bar k}$, then $(\fcut+1)A_k$ is the $k$th component of the solution $v^k$ to $Mv^k=(s_k^1,\ldots,s_k^m,0,\ldots,0)^T$. Thus
\begin{equation*}
\sum_{\bar k=1}^m|\alpha_k^{\bar k}|
=A_k
\leq\frac{|v^k|_\infty}{\fcut+1}
\leq\frac{|M^{-1}|_\infty}{\fcut+1}
\leq\frac3{\fcut+1}.
\end{equation*}
Analogously, letting $r_k^{\bar k}\in\{-1,1\}$ denote the sign of $\beta_k^{\bar k}$, then $\sqrt2\pi(\fcut+1)^2B_k$ is the $k$th component of the solution $w^k$ to $Mw^k=(r_k^1,\ldots,r_k^m,0,\ldots,0)^T$ so that
\begin{equation*}
\sum_{\bar k=1}^m|\beta_k^{\bar k}|
=B_k
\leq\frac{|w^k|_\infty}{\sqrt2\pi(\fcut+1)^2}
\leq\frac{|M^{-1}|_\infty}{\sqrt2\pi(\fcut+1)^2}
\leq\frac3{\sqrt2\pi(\fcut+1)^2}.
\end{equation*}
Now let $t\in[0,1]$ and take $l$ such that $t_l$ is closest to $t$.
We have
\begin{multline*}
\sum_{\bar k=1}^m|g_{\bar k}(x)|
\leq\sum_{\bar k=1}^m\sum_{k=1}^m|\alpha_{k}^{\bar k}|\Fej(x-x_{k})+|\beta_{k}^{\bar k}| |\Fej'(x-x_{k})|
\leq\sum_{k=1}^mA_k\Fej(x-x_{k})+B_k|\Fej'(x-x_{k})|\\
\leq A_l\Fej(x-x_{l})+B_l|\Fej'(x-x_{l})|+\sum_{k=1,k\neq l}^mA_k\Fej(x-x_{k})+B_k|\Fej'(x-x_{k})|\\
\leq\frac3{\fcut+1}(\fcut+1)+\frac3{\sqrt2\pi(\fcut+1)^2}2(\fcut+1)^2+\sum_{k=1,k\neq l}^m\frac3{\fcut+1}\frac1{(\fcut+1)4|x-x_l|^2}+\frac3{\sqrt2\pi(\fcut+1)^2}\frac{3\pi}{4|x-x_l|^2}\\
\leq5+\frac{10}{(\fcut+1)^2\Delta^2}\sum_{k=1,k\neq l}^m\frac1{|k-l|^2}
\leq5+\frac{10\pi^2}{3(\fcut+1)^2\Delta^2}
\leq9.
\end{multline*}
Analogously,
\begin{multline*}
\sum_{\bar k=1}^m|g_{\bar k}''(x)|
\leq\sum_{\bar k=1}^m\sum_{k=1}^m|\alpha_{k}^{\bar k}| |\Fej''(x-x_{k})|+|\beta_{k}^{\bar k}| |\Fej'''(x-x_{k})|
\leq\sum_{k=1}^mA_k|\Fej''(x-x_{k})|+B_k|\Fej'''(x-x_{k})|\\
\leq A_l|\Fej''(x-x_{l})|+B_l|\Fej'''(x-x_{l})|+\sum_{k=1,k\neq l}^mA_k|\Fej''(x-x_{k})|+B_k|\Fej'''(x-x_{k})|
\leq C(\fcut+1)^2
\end{multline*}
for a fixed constant $C$ (depending only on a lower bound for $(\fcut+1)\Delta$.

}%

\subsection{Construction of dual certificates}\label{sec:dualCertificates}

After the previous preparations, we are now in a position to prove that all source conditions in \crefrange{thm:exactRecoverySupport}{thm:modificationError} are satisfied
as long as $\Delta$ is sufficiently large.

\begin{prop}[Existence of dual certificates I]\label{thm:dualCertificatesI}
There exist constants $C_1,C_2,C_4>0$ such that choosing $R=\frac{C_2}{\fcut+1}$ and $\kappa=C_4(\fcut+1)^2$
the dual variables $w^\hor,w^\ver$ from \cref{thm:exactRecoverySupport,thm:TVestimates} exist whenever $\Delta>\frac{C_1}{\fcut+1}$.
\end{prop}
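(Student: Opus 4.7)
The plan is to exploit the product structure of the measurement operator and construct $g^\hor,g^\ver$ as separable functions. I apply \cref{thm:existencePolynomials} twice: once with nodes $y_1,\ldots,y_N\in\R/\Z$ and signs $s^\hor_1,\ldots,s^\hor_N\in\{-1,1\}$ to obtain a trigonometric polynomial $q^\hor$ of degree at most $\fcut$, and once with nodes $x_1,\ldots,x_M$ and signs $s^\ver_m$ to obtain $q^\ver$. The hypothesis $\Delta\geq C_1/(\fcut+1)$ transfers directly, since the $x_m$ and the $y_n$ individually have mutual minimum distance at least $\Delta$. I then set $g^\hor(x,y):=q^\hor(y)$ and $g^\ver(x,y):=q^\ver(x)$, so that $g=(g^\ver,g^\hor)\in C^\infty(\Omega)^2\subset H(\dive;\Omega)\cap C(\Omega)^2$.

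The identities $g^\hor=s^\hor_n$ on $\R/\Z\times\{y_n\}$ and $g^\ver=s^\ver_m$ on $\{x_m\}\times\R/\Z$ are immediate from the interpolation property of $q^\hor,q^\ver$. For the pointwise inequalities, note that $\dist((x,y),\Lh)=\dist(y,\{y_1,\ldots,y_N\})$ and analogously for $\Lv$. Outside the neighbourhoods $B_n=B_R(\{y_n\})$, \cref{thm:existencePolynomials} gives $|q^\hor|\leq 1-\kappa R^2\leq 1-\kappa\min(R,\dist(y,\Lh))^2$. Inside each $B_n$ the one-sided bound $s^\hor_nq^\hor(y)\leq 1-\kappa\dist(y,y_n)^2$ combined with the closeness $|q^\hor(y)-s^\hor_n|\leq\eta\dist(y,y_n)^2$ forces $q^\hor$ to take the same sign as $s^\hor_n$ throughout $B_n$ (provided $\eta R^2<1$, which is arranged by shrinking $C_2$ if necessary), hence $|q^\hor(y)|=s^\hor_nq^\hor(y)\leq 1-\kappa\dist(y,y_n)^2$. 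Combining the two regimes delivers $|g^\hor(x,y)|\leq 1-\kappa\min(R,\dist(y,\Lh))^2$, in particular $|g^\hor|<1$ off $\Lh$; the bound for $g^\ver$ is analogous.

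To exhibit $w^\hor,w^\ver\in Y^\dual$, I expand $q^\hor(y)=\sum_{j=-\fcut}^{\fcut}c_je^{2\pi ijy}$, so that $\D_yg^\hor(x,y)=\sum_{j=-\fcut}^{\fcut}2\pi ijc_je^{2\pi ijy}$ is a two-dimensional trigonometric polynomial of degree at most $\fcut$ which happens to be independent of $x$, hence lies in the range of $K^\dual$. Explicitly, setting $w^\hor_{(0,j)}=2\pi ijc_j$ and $w^\hor_k=0$ for $k_1\neq 0$ yields $K^\dual w^\hor=\D_yg^\hor$; $w^\ver$ is defined in the same fashion. The constants $C_1,C_2,C_4$ are precisely those from \cref{thm:existencePolynomials}, possibly with $C_2$ further decreased as indicated. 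The main (mild) subtlety, which is resolved by a sufficiently small choice of $C_2$, is the sign control of $q^\hor$ on the small neighbourhoods $B_n$ via the three estimates of \cref{thm:existencePolynomials}; everything else is routine bookkeeping.
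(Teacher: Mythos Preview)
Your proof is correct and follows exactly the paper's approach: apply \cref{thm:existencePolynomials} separately in the $x$- and $y$-variables, lift the resulting one-dimensional polynomials to separable functions on $\Omega$, and observe that their partial derivatives are trigonometric polynomials of degree at most $\fcut$ and hence lie in the range of $K^\dual$. In fact you supply more detail than the paper does, most notably the sign-control argument on the neighbourhoods $B_n$ (via $\eta R^2<1$) that upgrades the one-sided bound $s_n^\hor q^\hor\leq 1-\kappa\dist(\cdot,y_n)^2$ to the two-sided bound $|q^\hor|\leq 1-\kappa\min(R,\dist(\cdot,\Lh))^2$ required by \cref{thm:TVestimates}; the paper's proof leaves this step implicit.
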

\begin{proof}
Just define $h^\ver$ to be the trigonometric polynomial from \cref{thm:existencePolynomials} for $t_i=x_i$ and $s_i=s_i^\ver$
and analogously $h^\hor$ to be the trigonometric polynomial for $t_i=y_i$ and $s_i=s_i^\hor$.
Then set $g^\ver(x,y)=h^\ver(x)$ and $g^\hor(x,y)=h^\hor(y)$.
Since $g^\hor,g^\ver$ are trigonometric polynomials of degree $\fcut$, so are $\D_yg^\hor$ and $\D_xg^\ver$, which thereby lie in the range of $K^\dual $.
Thus there exist $w^\hor,w^\ver$ with $K^\dual w^\hor=\D_yg^\hor$ and $K^\dual w^\ver=\D_xg^\ver$, as desired.
\end{proof}

\notinclude{%
\subsection{Approximation of piecewise constant functions}
Now define
\begin{equation*}
\cha_{[a,b]}(t)=F_{b-a}^{\fcut}(t-a).
\end{equation*}
Given any combination of signs $s_{kl}\in\{-1,1\}$ for $k=0,\ldots,m$ and $l=0,\ldots,n$ we now fix some $\epsilon>0$ and set
\begin{equation*}
g(x,y)=\sum_{k=0}^m\sum_{l=0}^ns_{kl}F_{x_k+\epsilon,x_{k+1}-\epsilon}^{\fcut}(x)F_{y_l+\epsilon,y_{l+1}-\epsilon}^{\fcut}(y).
\end{equation*}
We have
\begin{multline*}
|g(x,y)|
\leq\sum_{k=0}^m\sum_{l=0}^nF_{x_k+\epsilon,x_{k+1}-\epsilon}^{\fcut}(x)F_{y_l+\epsilon,y_{l+1}-\epsilon}^{\fcut}(y)
=\left(\sum_{k=0}^mF_{x_k+\epsilon,x_{k+1}-\epsilon}^{\fcut}(x)\right)\left(\sum_{l=0}^nF_{y_l+\epsilon,y_{l+1}-\epsilon}^{\fcut}(y)\right)\\
\leq\int_0^1\Fej(x-s)\wrt s\int_0^1\Fej(y-s)\wrt s=1.
\end{multline*}
Furthermore, on $[x_{\bar k},x_{\bar k+1}]\times[y_{\bar l},y_{\bar l+1}]$ we have
$g(x,y)=s_{\bar k\bar l}F_{x_{\bar k}+\epsilon,x_{\bar k+1}-\epsilon}^{\fcut}(x)F_{y_{\bar l}+\epsilon,y_{\bar l+1}-\epsilon}^{\fcut}(y)+R(x,y)$
with the remainder \todo{notation clash: later, $R$ is the distance}
\begin{multline*}
|R(x,y)|
=\left|\sum_{(k,l)\neq(\bar k,\bar l)}s_{kl}F_{x_k+\epsilon,x_{k+1}-\epsilon}^{\fcut}(x)F_{y_l+\epsilon,y_{l+1}-\epsilon}^{\fcut}(y)\right|
\leq\sum_{(k,l)\neq(\bar k,\bar l)}F_{x_k+\epsilon,x_{k+1}-\epsilon}^{\fcut}(x)F_{y_l+\epsilon,y_{l+1}-\epsilon}^{\fcut}(y)\\
\leq\left(1+\sum_{k\neq\bar k}F_{x_k+\epsilon,x_{k+1}-\epsilon}^{\fcut}(x)\right)\left(1+\sum_{l\neq\bar l}F_{y_l+\epsilon,y_{l+1}-\epsilon}^{\fcut}(y)\right)-1\\
\leq\left(1+F_{0,x_{\bar k}-\epsilon}^{\fcut}(x)+F_{x_{\bar k+1}+\epsilon,1}^{\fcut}(x)\right)\left(1+F_{0,y_{\bar l}-\epsilon}^{\fcut}(y)+F_{y_{\bar l+1}+\epsilon,1}^{\fcut}(y)\right)-1\\
\leq\left(1+\frac\pi{(\fcut+1)\epsilon}\right)^2-1.
\end{multline*}
For instance, assuming
\begin{equation*}
\epsilon=\frac{40}{\fcut+1}
\qquad\text{and}\qquad
\Delta\geq\frac{320}{\fcut+1}
\end{equation*}
(thus $\Delta\geq8\epsilon$), we get
\begin{equation*}
|R(x,y)|
<\frac16.
\end{equation*}
On the other hand, on $[x_{\bar k}+2\epsilon,x_{\bar k+1}-2\epsilon]\times[y_{\bar l}+2\epsilon,y_{\bar l+1}-2\epsilon]$ we have
\begin{equation*}
F_{x_{\bar k}+\epsilon,x_{\bar k+1}-\epsilon}^{\fcut}(x)F_{y_{\bar l}+\epsilon,y_{\bar l+1}-\epsilon}^{\fcut}(y)
\geq1-\frac\pi{(\fcut+1)\epsilon}
\geq\frac{11}{12}.
\end{equation*}
Summarizing,
\begin{align*}
s_{\bar k\bar l}\int_{x_{\bar k}}^{x_{\bar k+1}}\int_{y_{\bar l}}^{y_{\bar l+1}}g(x,y)\wrt(x,y)
&=\int_{x_{\bar k}}^{x_{\bar k+1}}\int_{y_{\bar l}}^{y_{\bar l+1}}F_{x_{\bar k}+\epsilon,x_{\bar k+1}-\epsilon}^{\fcut}(x)F_{y_{\bar l}+\epsilon,y_{\bar l+1}-\epsilon}^{\fcut}(y)+s_{\bar k\bar l}R(x,y)\wrt(x,y)\\
&\geq(x_{\bar k+1}-x_{\bar k}-4\epsilon)(y_{\bar l+1}-y_{\bar l}-4\epsilon)\frac{11}{12}-(x_{\bar k+1}-x_{\bar k})(y_{\bar l+1}-y_{\bar l})\frac1{6}\\
&\geq(x_{\bar k+1}-x_{\bar k})(y_{\bar l+1}-y_{\bar l})(1-4/8)^2\frac{11}{12}-(x_{\bar k+1}-x_{\bar k})(y_{\bar l+1}-y_{\bar l})\frac1{6}
>0.
\end{align*}
}%

\begin{prop}[Existence of dual certificates II]\label{thm:dualCertificatesII}
The dual variable $w$ from \cref{thm:exactRecovery} exists whenever $\Delta\geq\frac{3}{\fcut+1}$.
\end{prop}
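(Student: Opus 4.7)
The plan is to construct $g=K^\dual w$ as a tensor product of approximate characteristic functions from \cref{thm:charFun}, mimicking the target sign pattern. Concretely, set
\[
g(x,y)=\sum_{m=1}^M\sum_{n=1}^N s_{mn}\,\cha_{[x_m,x_{m+1}]}(x)\,\cha_{[y_n,y_{n+1}]}(y).
\]
Since each factor is a trigonometric polynomial of degree $\fcut$, so is $g$, and therefore $g$ lies in the range of $K^\dual$, which yields some $w\in Y^\dual$ with $K^\dual w=g$.

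Using Fubini together with the identity $\cha_{[a,b]}(t)=\int_a^b\Fej(t-s)\wrt s$, the integral of $K^\dual w$ over $[x_m,x_{m+1}]\times[y_n,y_{n+1}]$ factorises as
\[
\int_{x_m}^{x_{m+1}}\!\!\int_{y_n}^{y_{n+1}}\!\!K^\dual w(x,y)\wrt(x,y)=\sum_{m'=1}^M\sum_{n'=1}^N s_{m'n'}\,P_{m,m'}\,Q_{n,n'},
\]
with $P_{m,m'}=\int_{x_m}^{x_{m+1}}\cha_{[x_{m'},x_{m'+1}]}(x)\wrt x$ and $Q_{n,n'}$ analogously. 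Two features are decisive. First, since $\sum_{m'}\chi_{[x_{m'},x_{m'+1}]}\equiv 1$ on $\R/\Z$ and $\int_0^1\Fej=1$, Fubini gives $\sum_{m'}P_{m,m'}=x_{m+1}-x_m$ and $\sum_{n'}Q_{n,n'}=y_{n+1}-y_n$. Second, applying \cref{thm:charFun} on $[x_m,x_{m+1}]$, whose length is at least $\Delta\geq 3/(\fcut+1)$, yields the diagonal lower bound $P_{m,m}\geq c\,(x_{m+1}-x_m)$, where
\[
c=1-\frac{5+\log(2\pi(\fcut+1)(x_{m+1}-x_m))}{\pi^2(\fcut+1)(x_{m+1}-x_m)}
\]
is monotone in $(\fcut+1)(x_{m+1}-x_m)$ and hence, under the hypothesis $(\fcut+1)\Delta\geq 3$, bounded below by $c_0:=1-(5+\log(6\pi))/(3\pi^2)\approx 0.732>1/\sqrt{2}$. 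The same bound holds for $Q_{n,n}/(y_{n+1}-y_n)$.

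Separating the diagonal contribution and bounding off-diagonal sign-indefinite terms by absolute values then gives
\begin{align*}
s_{mn}\int_{x_m}^{x_{m+1}}\!\!\int_{y_n}^{y_{n+1}}\!\!K^\dual w
&=P_{m,m}Q_{n,n}+\sum_{(m',n')\neq(m,n)}s_{mn}s_{m'n'}P_{m,m'}Q_{n,n'}\\
&\geq 2P_{m,m}Q_{n,n}-\Big(\sum_{m'}P_{m,m'}\Big)\Big(\sum_{n'}Q_{n,n'}\Big)\\
&\geq(2c_0^2-1)(x_{m+1}-x_m)(y_{n+1}-y_n),
\end{align*}
which is strictly positive since $2c_0^2-1>0$, yielding the claim.

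The delicate point is crossing the threshold $c>1/\sqrt{2}$: the elementary estimate $P_{m,m}\geq\tfrac{13}{20}(x_{m+1}-x_m)$ from \cref{thm:charFun} is insufficient, as $2(13/20)^2<1$; the sharper logarithmic bound in that lemma is essential, and the condition $\Delta\geq 3/(\fcut+1)$ is tuned precisely so that $(\fcut+1)\Delta\geq 3$ forces $c_0^2>1/2$.
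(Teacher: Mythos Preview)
Your argument is correct and essentially identical to the paper's: you use the same construction $g(x,y)=\sum_{m,n}s_{mn}\cha_{[x_m,x_{m+1}]}(x)\cha_{[y_n,y_{n+1}]}(y)$, the same key identities $\sum_{m'}\cha_{[x_{m'},x_{m'+1}]}\equiv1$ and the sharper logarithmic bound from \cref{thm:charFun} (the paper likewise notes that $13/20$ is not enough and uses $c_0>0.73$), and arrive at the same decisive inequality $2c_0^2-1>0$. The only cosmetic difference is that the paper bounds the remainder pointwise before integrating, whereas you integrate first and work with the matrices $P_{m,m'},Q_{n,n'}$; also, on the torus one should write $\length([x_m,x_{m+1}])$ rather than $x_{m+1}-x_m$.
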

\begin{proof}
We simply take
\begin{equation*}
g(x,y)=\sum_{m,n}s_{mn}\cha_{[x_m,x_{m+1}]}(x)\cha_{[y_n,y_{n+1}]}(y),
\end{equation*}
then on $[x_{\bar m},x_{\bar m+1}]\times[y_{\bar n},y_{\bar n+1}]$ we have
$g(x,y)=s_{\bar m\bar n}\cha_{[x_{\bar m},x_{\bar m+1}]}(x)\cha_{[y_{\bar n},y_{\bar n+1}]}(y)+R(x,y)$
with the remainder
\begin{multline*}
|R(x,y)|
=\left|\sum_{(m,n)\neq(\bar m,\bar n)}s_{mn}\cha_{[x_m,x_{m+1}]}(x)\cha_{[y_n,y_{n+1}]}(y)\right|\\
\leq\sum_{(m,n)\neq(\bar m,\bar n)}\cha_{[x_m,x_{m+1}]}(x)\cha_{[y_n,y_{n+1}]}(y)
=1-\cha_{[x_{\bar m},x_{\bar m+1}]}(x)\cha_{[y_{\bar n},y_{\bar n+1}]}(y).
\end{multline*}
With this and the estimate $$\int_a^b\cha_{[a,b]}(t)\wrt t>\left(1-\frac{5+\log(2\pi(\fcut+1)l_a^b)}{\pi^2(\fcut+1)l_a^b}\right)l_a^b>\left(1-\frac{5+\log(6\pi)}{3\pi^2}\right)l_a^b>0.73l_a^b$$ from \cref{thm:charFun} for $l_a^b=\length([a,b])\geq\Delta>\frac3{\fcut+1}$ we can estimate
\begin{align*}
s_{\bar m\bar n}\int_{x_{\bar m}}^{x_{\bar m+1}}\int_{y_{\bar n}}^{y_{\bar n+1}}g(x,y)\wrt(x,y)
&=\int_{x_{\bar m}}^{x_{\bar m+1}}\int_{y_{\bar n}}^{y_{\bar n+1}}\cha_{[x_{\bar m},x_{\bar m+1}]}(x)\cha_{[y_{\bar n},y_{\bar n+1}]}(y)+s_{\bar m\bar n}R(x,y)\wrt(x,y)\\
&\geq\int_{x_{\bar m}}^{x_{\bar m+1}}\int_{y_{\bar n}}^{y_{\bar n+1}}2\cha_{[x_{\bar m},x_{\bar m+1}]}(x)\cha_{[y_{\bar n},y_{\bar n+1}]}(y)-1\wrt(x,y)\\
&\geq(2\cdot0.73^2-1)\length([x_{\bar m},x_{\bar m+1}])\length([y_{\bar n},y_{\bar n+1}])
>0.
\qedhere
\end{align*}
\notinclude{
Abbreviate $x_{m+\frac12}$ and $y_{n+\frac12}$ to be the midpoints of $[x_m,x_{m+1}]$ and $[y_n,y_{n+1}]$, respectively, and take
\begin{equation*}
g(x,y)=\sum_{m,n}s_{mn}\Fej(x-x_{m+\frac12})\Fej(y-y_{n+\frac12}).
\end{equation*}
Then on $[x_{\bar m},x_{\bar m+1}]\times[y_{\bar n},y_{\bar n+1}]$ we have
$g(x,y)=s_{\bar m\bar n}\Fej(x-x_{\bar m+\frac12})\Fej(y-y_{\bar n+\frac12})+R(x,y)$
with the remainder
\begin{align*}
|R(x,y)|
&=\left|\sum_{(m,n)\neq(\bar m,\bar n)}s_{mn}\Fej(x-x_{m+\frac12})\Fej(y-y_{n+\frac12})\right|\\
&\leq\sum_{(m,n)\neq(\bar m,\bar n)}\Fej(x-x_{m+\frac12})\Fej(y-y_{n+\frac12})\\
&\leq\left(\fcut+1+\sum_{m\neq\bar m}\Fej(x-x_{m+\frac12})\right)\left(\fcut+1+\sum_{n\neq\bar n}\Fej(y-y_{n+\frac12})\right)-(\fcut+1)^2\\
&\leq\left(\fcut+1+\sum_{m\neq\bar m}\frac1{4(\fcut+1)\dist^2(x_{m+\frac12},[x_{\bar m},x_{\bar m+1}])}\right)\\
&\hspace*{20ex}\cdot\left(\fcut+1+\sum_{n\neq\bar n}\frac1{4(\fcut+1)\dist^2(y_{n+\frac12},[y_{\bar n},y_{\bar n+1}])}\right)-(\fcut+1)^2,
\end{align*}
where in the last step we exploited \cref{thm:FejerDerivatives}.
Now we have $\dist(x_{m+\frac12},[x_{\bar m},x_{\bar m+1}])\geq(\min\{|\bar m-m|,||\bar m-m|-M|\}-\frac12)\Delta$ so that
\begin{equation*}
\sum_{m\neq\bar m}\frac1{\dist^2(x_{m+\frac12},[x_{\bar m},x_{\bar m+1}])}
\leq\sum_{m\neq\bar m}\frac1{(\min\{|\bar m-m|,||\bar m-m|-M|\}-\frac12)^2\Delta^2}
\leq\frac8{\Delta^2}\sum_{j=1}^\infty\frac1{j^2}
\leq\frac{4\pi^2}{3\Delta^2}
\end{equation*}
by the Basel problem and similarly $\sum_{n\neq\bar n}\frac1{\dist^2(y_{n+\frac12},[y_{\bar n},y_{\bar n+1}])}\leq\frac{4\pi^2}{3\Delta^2}$. Thus
\begin{equation*}
|R(x,y)|
\leq\left(\fcut+1+\frac{\pi^2}{3(\fcut+1)\Delta^2}\right)^2-(\fcut+1)^2.
\end{equation*}
Furthermore, we can estimate
\begin{multline*}
\int_{x_m}^{x_{m+1}}\Fej(x-x_{m+\frac12})\wrt x
=1-\int_{[x_m,x_{m+1}]^c}\Fej(x-x_{m+\frac12})\wrt x
=1-2\int_{\dist(x_m,x_{m+1})/2}^{1/2}\Fej(t)\wrt t\\
\geq1-2\int_{\Delta/2}^{1/2}\frac C{4(\fcut+1)t^2}\wrt t
=1-\frac C{\fcut+1}\left(\frac1\Delta-1\right)
\geq1-\frac C{(\fcut+1)\Delta}.
\end{multline*}
Summarizing, if $\Delta>\frac3{\fcut+1}$, then $|R(x,y)|\leq\frac13$ and
\begin{align*}
s_{\bar m\bar n}\int_{x_{\bar m}}^{x_{\bar m+1}}\int_{y_{\bar n}}^{y_{\bar n+1}}g(x,y)\wrt(x,y)
&=\int_{x_{\bar m}}^{x_{\bar m+1}}\int_{y_{\bar n}}^{y_{\bar n+1}}\Fej(x-x_{\bar m+\frac12})\Fej(y-y_{\bar n+\frac12})+s_{\bar m\bar n}R(x,y)\wrt(x,y)\\
&\geq\frac49-\frac13
>0.
\end{align*}
\todo{unfortunately does not work with this, need to use approximate characteristic functions}
}%
\end{proof}

Finally we aim to obtain a dual variable $g$ for the convergence rate result.

\begin{prop}[Existence of dual certificates III]\label{thm:dualCertificatesIII}
There exist constants $C_1,C_2,C_3>0$ such that if $\Delta\geq\frac{C_1}{\fcut+1}$, then the dual certificates $v^\hor,v^\ver$ from \cref{thm:modificationError} exist for $R=\frac{C_2}{\fcut+1}$, $\eta=C_3(\fcut+1)^2$ and $C=\frac3{10}$.
\end{prop}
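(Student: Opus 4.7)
The plan is to build $g^\hor$ and $g^\ver$ as tensor-product-like combinations of Lagrange-type trigonometric polynomials from \cref{thm:existencePolynomials} with sign-weighted approximate characteristic functions $\cha$. For each $n=1,\ldots,N$ let $h_n^\hor$ be the polynomial of \cref{thm:existencePolynomials} for nodes $y_1,\ldots,y_N$ with signs $s_i=\delta_{i,n}$, and for each $m=1,\ldots,M$ let $h_m^\ver$ be its analogue for nodes $x_1,\ldots,x_M$. Define
\begin{equation*}
g^\hor(x,y)=\sum_{n=1}^N h_n^\hor(y)\sum_{m=1}^M s^\hor_{mn}\cha_{[x_m,x_{m+1}]}(x),\qquad g^\ver(x,y)=\sum_{m=1}^M h_m^\ver(x)\sum_{n=1}^N s^\ver_{mn}\cha_{[y_n,y_{n+1}]}(y).
\end{equation*}
Both are trigonometric polynomials of degree $\fcut$ in each variable, so $-\D_y g^\hor,-\D_x g^\ver\in\range(K^\dual)$ define $v^\hor,v^\ver\in Y^\dual$, whose norms $\|v^\hor\|_{Y^\dual}=\|\D_y g^\hor\|_{L^2}$ are controlled via Bernstein's inequality combined with the coefficient estimates from \cref{thm:FejerCoefficients}.

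The integral lower bound is the cleanest to verify. At $y=y_n$ the Lagrange property $h_n^\hor(y_{n'})=\delta_{n,n'}$ collapses the outer sum, yielding $g^\hor(x,y_n)=\sum_{m'} s^\hor_{m'n}\cha_{[x_{m'},x_{m'+1}]}(x)$, hence
\begin{equation*}
s^\hor_{mn}\int_{x_m}^{x_{m+1}}g^\hor(x,y_n)\wrt x=\int_{x_m}^{x_{m+1}}\cha_{[x_m,x_{m+1}]}\wrt x+s^\hor_{mn}\sum_{m'\neq m}s^\hor_{m'n}\int_{x_m}^{x_{m+1}}\cha_{[x_{m'},x_{m'+1}]}\wrt x.
\end{equation*}
Using $\sum_{m'}\cha_{[x_{m'},x_{m'+1}]}\equiv 1$ together with the estimate $\int_{x_m}^{x_{m+1}}\cha_{[x_m,x_{m+1}]}\geq\tfrac{13}{20}\length([x_m,x_{m+1}])$ from \cref{thm:charFun}, the remainder is bounded by $\tfrac{7}{20}\length([x_m,x_{m+1}])$ in magnitude, delivering the desired lower bound with constant $\tfrac{3}{10}$ (the length factor being implicit in the value of $C$). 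The quadratic variation condition on $\Lh_R$ then reduces to
\begin{equation*}
|g^\hor(x,y)-g^\hor(x,\pi_\Lh(y))|\leq\sum_n|h_n^\hor(y)-h_n^\hor(\pi_\Lh(y))|\leq N\eta\dist(y,\Lh)^2
\end{equation*}
via the control $|h_n^\hor-\delta_{n,n'}|\leq\eta\dist(\cdot,y_{n'})^2$ near $y_{n'}$ from \cref{thm:existencePolynomials}, the factor $N$ being absorbed into a new $C_3$.

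The main obstacle will be the uniform bound $\|g^\hor\|_\infty\leq 1$. It is trivial at $y=y_n$ since $g^\hor(x,y_n)=\sum_m s^\hor_{mn}\cha_{[x_m,x_{m+1}]}(x)$ has modulus bounded by $\sum_m\cha_{[x_m,x_{m+1}]}(x)=1$, but between nodes one must control the cross-contributions from $h_n^\hor(y)$ for $n$ not closest to $y$. The coefficient estimates in \cref{thm:FejerCoefficients} show that each $h_n^\hor$ consists of a bump of height $1+O(C_1^{-2})$ concentrated near $y_n$ plus Fej\'er-kernel tails of size $O(C_1^{-2}(\fcut+1)^{-2}\dist(\cdot,y_n)^{-2})$ elsewhere, so that $\sup_y\sum_n|h_n^\hor(y)|\leq 1+O(C_1^{-2})+O(C_3 C_2^2)$; this is only $\leq 1$ up to a multiplicative factor $(1+\varepsilon)$. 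The remedy is either to rescale $g^\hor$ by $(1+\varepsilon)^{-1}$, at the cost of a slightly smaller numerical constant, or to premultiply each $h_n^\hor$ by $(1-\varepsilon)$ to force its peak strictly below $1$; in either case taking $C_1$ large and $C_2$ small preserves the lower bound $\geq\tfrac{3}{10}$ and all the other estimates, and the construction and analysis for $g^\ver$ is strictly symmetric.
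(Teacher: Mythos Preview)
Your construction and the integral lower bound argument are exactly what the paper does. Where you diverge is in the two remaining estimates, and here you miss the one observation that makes the paper's proof work cleanly: the ansatz in \cref{thm:existencePolynomials} is \emph{linear} in the sign vector. Concretely, for any $\sigma\in\{-1,0,1\}^M$ the polynomial $\sum_m \sigma_m h_m^\ver$ coincides with the polynomial of \cref{thm:existencePolynomials} for the signs $\sigma$, since both solve the same linear system \eqref{eqn:trigConditions}. Fixing $x$ and taking $\sigma_m=\sgn(h_m^\ver(x))$ gives $\sum_m|h_m^\ver(x)|\leq1$ directly from \cref{thm:existencePolynomials}; no $\varepsilon$-slack, no rescaling. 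Fixing $x$ near $x_{\bar m}$ and taking $\sigma_m=\sgn(h_m^\ver(x)-h_m^\ver(x_{\bar m}))$ gives
\[
\sum_m|h_m^\ver(x)-h_m^\ver(x_{\bar m})|=\tilde g(x)-\tilde g(x_{\bar m})=\tilde g(x)-\sigma_{\bar m}\leq\eta\,\dist(x,x_{\bar m})^2
\]
again straight from \cref{thm:existencePolynomials}, with no factor $M$ or $N$.

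Your workarounds are not quite adequate as stated. The rescaling by $(1+\varepsilon)^{-1}$ drops the integral lower bound strictly below $\tfrac{3}{10}$, so you do not obtain the proposition with the stated constant $C=\tfrac{3}{10}$. More seriously, the factor $N$ in your quadratic variation estimate cannot be ``absorbed into a new $C_3$'': $N$ depends on the ground truth, and under the hypothesis $\Delta\geq C_1/(\fcut+1)$ one only has $N\leq(\fcut+1)/C_1$, which would force $\eta$ of order $(\fcut+1)^3$ rather than the claimed $C_3(\fcut+1)^2$. The linearity trick above removes both issues at once.
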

\begin{proof}
We just consider $v^\ver$, the existence of $v^\hor$ follows analogously.
We will actually construct the corresponding function $g^\ver$ from \cref{thm:modificationError} as a trigonometric polynomial of degree $\fcut$,
then $-\D_xg^\ver$ is a trigonometric polynomial of the same degree and hence in the range of $K^\dual $ so that there exists the desired $v^\ver$ with $K^\dual v^\ver=-\D_xg^\ver$.

To this end let $s_{mn}^\ver\in\{-1,1\}$ for $m=1,\ldots,M$, $n=1,\ldots,N$ be given.
We take $C_1,C_2,C_3$ to be the values from \cref{thm:existencePolynomials} and set
\begin{equation*}
g^\ver(x,y)=\sum_{m=1}^Mg_m(x)\sum_{n=1}^Ns_{kl}\cha_{[y_n,y_{l+1}]}(y),
\end{equation*}
where $g_m$ is the function from \cref{thm:existencePolynomials} for $t_i=x_i$ and $s_m=1$ and $s_i=0$ else.
By construction we have $g^\ver\in\mathrm{range}K^\dual $ and
\begin{equation*}
|g^\ver(x,y)|\leq\sum_{m=1}^Mg_m(x).%
\end{equation*}
Now note that the right-hand side equals the function constructed in \cref{thm:FejerCoefficients,thm:existencePolynomials} for the signs $s_i=1$.
By \cref{thm:existencePolynomials} this function has absolute value bounded by one, thus $|g^\ver(x,y)|\leq1$.
Then for any $x\in[x_{\bar m}-R,x_{\bar m}+R]$ we have
\begin{equation*}
|g^\ver(x,y)-g^\ver(x_{\bar m},y)|
=\left|\sum_{m=1}^M(g_m(x)-g_m(x_{\bar m}))\sum_{n=1}^Ns_{kl}\cha_{[y_n,y_{l+1}]}(y)\right|
\leq\sum_{m=1}^M|g_m(x)-g_m(x_{\bar m})|.
\end{equation*}
Now let $s_m$ be the sign of $g_m(x)-g_m(x_{\bar m})$, then
\begin{equation*}
\sum_{m=1}^M|g_m(x)-g_m(x_{\bar m})|
=\sum_{m=1}^Ms_m(g_m(x)-g_m(x_{\bar m}))
=\tilde g(x)-\tilde g(x_{\bar m}),
\end{equation*}
where $\tilde g$ is the function constructed in \cref{thm:FejerCoefficients,thm:existencePolynomials} for the signs $s_m$.
By \cref{thm:existencePolynomials} we thus have
\begin{equation*}
|g^\ver(x,y)-g^\ver(x_{\bar m},y)|
\leq\eta\dist(x,x_{\bar m})^2.
\end{equation*}
Furthermore, on $\{x_{\bar m}\}\times[y_{\bar n},y_{\bar n+1}]$ we have
\begin{equation*}
g^\ver(x_{\bar m},y)=\sum_{n=1}^Ns_{mn}^\ver\cha_{[y_n,y_{n+1}]}(y)
=s_{\bar m\bar n}^\ver\cha_{[y_{\bar n},y_{\bar n+1}]}(y)+R_{\bar m\bar n}(y),
\end{equation*}
whose remainder can be estimated by
\begin{equation*}
|R_{\bar m\bar n}(y)|
=\left|\sum_{n\neq\bar n}s_{\bar mn}^\ver\cha_{[y_n,y_{n+1}]}(y)\right|
\leq\sum_{n\neq\bar n}\cha_{[y_n,y_{n+1}]}(y)
=\cha_{[y_{\bar n},y_{\bar n+1}]^c}(y)
=1-\cha_{[y_{\bar n},y_{\bar n+1}]}(y).
\end{equation*}
Thus, using \cref{thm:charFun} we obtain
\begin{align*}
s_{mn}^\ver\int_{y_{n}}^{y_{n+1}}g^\ver(x_{m},y)\wrt y
&=\int_{y_{n}}^{y_{n+1}}\cha_{[y_{n},y_{n+1}]}(y)+s_{mn}^\ver R_{mn}(y)\wrt y\\
&\geq\int_{y_n}^{y_{n+1}}(2\cha_{[y_{n},y_{n+1}]}(y)-1)\wrt y\\
&\geq(2\tfrac{13}{20}-1)\length([y_n,y_{n+1}]).
\qedhere
\end{align*}
\end{proof}

\section{Numerical experiments}
In this section, we provide numerical experiments that aim at confirming the exact recovery and convergence results of \cref{thm:exact_recon_intro,thm:convergence_intro} numerically.
\subsection{Data generation and algorithmic setup}
\paragraph*{Data generation.}
The data for the numerical experiments was created as follows. First, given an array of bins $(I^\Delta_1,\ldots,I^\Delta_K)$ for the minimal jump-point distances (here we used $I^\Delta_k = [k\cdot 0.01 - 0.005,k\cdot0.01+0.005]$ with $k=1,\ldots,10$), for each $I^\Delta_k$, jump points $\{(x_m)_{m=1}^M,(y_n)_{n=1}^N\}$ were created by first randomly selecting $N,M \in \{2,\ldots,20\}$ and then choosing the points $(x_m)_{m=1}^M$, $(y_n)_{n=1}^N$ in $[0,1[$ uniformly at random, where the interval $[0,1[$ was discretized using $120$ points.
The jump points were accepted if the minimum distance $\Delta$ of the points was in $I^\Delta_k$. The process was repeated until a total of 100 different sets of jump points were found for each bin $I^\Delta_k$.

Then, for each set of jump points $(x_m)_{m=1}^M$, $(y_n)_{n=1}^N$ of each bin $I^\Delta_k$, the values $u^\dagger_{mn} $ of the image
\begin{equation*}
u^\dagger = \sum_{m=1}^M \sum_{n=1}^N u^\dagger_{mn} \chi_{[x_m,x_{m+1}[ \times [y_n,y_{n+1}[}\in\BV(\Omega)
\end{equation*}
were defined by the following greedy strategy such that \cref{ass:consistentGradient} is fulfilled. First, an order for defining the values $(u^\dagger _{mn})_{mn}$ consecutively was selected at random. Then, for defining the value $u^\dagger _{mn}$ at any stage, again in random order, all horizontal and vertical neighbouring positions $(n+1,m),(n-1,m),(n,m+1),(n,m-1)$ were considered. If the value at a neighbouring position was already set and if the sign of the gradient corresponding to the edge between the two positions was already fixed, the range of possible values for $u^\dagger _{mn}$ was reduced such that, for any value in that range, \cref{ass:consistentGradient} was satisfied. In case this resulted in a degenerate range with no admissible values, the value at the neighbouring position together with other already specified values were corrected in such a way that (i) \cref{ass:consistentGradient} did not get violated at any other edge position and (ii) the range of possible values for $u^\dagger_{mn}$ was no longer degenerate. Then, given a non-degenerate range of admissible values for $u^\dagger_{mn}$, the value $u^\dagger_{mn}$ was selected uniformly at random in that range. Further, if the newly specified value $u^\dagger_{mn}$ determined a sign of the grey value jump across one of its edges, then the same jump sign was assigned to all edges on the same horizontal or vertical line.%
Two examples of images $u^\dagger $ created by this procedure can be found in \cref{fig:image_examples}.

\begin{figure}[t]
\centering
\includegraphics[width=0.25\linewidth]{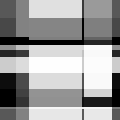}
\hspace*{1cm}
\includegraphics[width=0.25\linewidth]{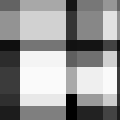}
\caption{\label{fig:image_examples} Two example images of $\Dvalid$ for $I^\delta = [0.015,0.025]$ (left) and $I^\delta = [0.085,0.095]$ (right)}
\end{figure}

The above procedure resulted in a data set of $1000$ admissible images $\Dvalid$, containing, for each bin $I^\Delta_k$ with $k=1,\ldots,10$, a set of $100$ different images with minimum jump-point distance $\Delta$ in the bin $I^\Delta_k$.
In order to test the necessity of \cref{ass:consistentGradient}, a second data set $\Dinvalid$ was generated by flipping exactly two randomly chosen neighbouring values for each image in $\Dvalid$ in such a way that \cref{ass:consistentGradient} did no longer hold.

The two data sets $\Dvalid$ and $\Dinvalid$ were used to numerically test the exact reconstruction result of \cref{thm:exact_recon_intro}. 
In addition, a data set $\Dconv$ was defined for numerically evaluating the convergence result of \cref{thm:convergence_intro}. The latter was obtained by selecting at most 5 images for each bin $I^\Delta_k$ of the data set $\Dvalid$, where the reconstruction with a frequency cutoff $\fcut=18$ for the forward operator $K$ (see \cref{eqn:truncatedFourier}) was successful. This resulted in 5 images for each bin $I^\Delta_k$ for $k=2,\ldots,10$ and in one image for the bin $I^\Delta _1$, such that in total, $\Dconv$ consists of $1 + 9\cdot 5 = 46$ images.

In a addition, also a real test image from a test image dataset (the \emph{Cameraman} image) was used in the numerical experiments to visualize reconstruction results with increasing frequency cutoff. 

\paragraph*{Algorithmic setup.} To numerically solve \ref{eq:main_min_prob} both for $\alpha = 0 $ and $\alpha>0$, the primal-dual algorithm of \cite{Pock11_primal_dual} was applied to an appropriate saddle-point reformulation (with the regularization functional being dualized) and implemented with a GPU-based parallelization using Python with PyOpenCL \cite{klockner2012pycuda}. Choosing the stepsize according to \cite{Pock11_primal_dual}, convergence of the algorithm to a global optimum of \ref{eq:main_min_prob} can be guaranteed. For details regarding the implementation we refer to the publicly available source code \cite{paper_source_code}, which in particular contains scripts to reproduce all experiments shown here. We highlight that the algorithm was implemented in such a way that the unknown images are constrained to be real at any point during the iterations and such that the proximal mapping (see \cite{Pock11_primal_dual}) of the data term is explicit both for $\alpha=0$ and $\alpha>0$. In case of noisy data, the data was symmetrized in a preprocessing step such that it corresponds to the Fourier transform of an image with only real-valued entries.

\subsection{Experimental results}
\paragraph*{Experiments regarding exact reconstruction.} To evaluate the exact reconstruction result of \cref{thm:exact_recon_intro} numerically, \ref{eq:main_min_prob} was solved for all images in $\Dvalid$ and $\Dinvalid$, once using a frequency cutoff of $\fcut=12$ and once using a cutoff of $\fcut=18$. Results showing both the $L^1$-distance to the ground truth and the percentage of correctly reconstructed images are provided in \cref{fig:error_plots}. Here, a reconstruction was considered correct if both the gradient of the reconstructed image coincided with the gradient of the ground truth pointwise up to a tolerance of $10^{-4}$ and if the values of the reconstructed and ground truth images at all points $(x_m,y_n)$, $m=1,\ldots,M$, $n=1,\ldots,N$ coincided up to the same tolerance. The red bars show the non-symmetric mean deviation of the $L^1$ error from its mean, i.e., the the upper bar shows the mean deviation of all values above the mean and the lower bar shows the mean deviation of all values below the mean (note also the logarithmic scaling).

It can be observed that, for the data set $\Dvalid$, the method reconstructs most of the images exactly whenever $\Delta  \geq 0.07$ for $\fcut=12 $ and whenever $\Delta  \geq 0.04$ for $\fcut=18$. Regarding $\Dinvalid$, it is remarkable that the reconstruction performance is clearly worse, even though only two single values were flipped for each image in order to violate \cref{ass:consistentGradient}. In particular, while for $\fcut=18$ exact reconstruction is possible for $\Delta \geq 0.08$, in case of $\fcut=12$ a consistent exact reconstruction is not obtained for the considered jump-point distances.
This strongly indicates that \cref{ass:consistentGradient} (or at least some consistency condition on the gradient directions) indeed makes a difference for exact reconstruction at a low frequency cutoff.

\paragraph*{Experiments regarding convergence for vanishing noise.} 
In order to numerically evaluate the convergence result, a range of noise variances $\delta_i = 10^{k_i}$ with $k_i$, $i=1,\ldots,20$ equally spaced between $-3$ and $3$, was defined, and for each image $u^\dagger$ in $\Dconv$, Gaussian noise
with zero mean and a variance such that 
\[ \frac{1}{2}\| f^{\delta_i} - f^\dagger \|_2 ^2 = \delta_i\] in expectation
was added to $f^\dagger = Ku^\dagger$, where $f^{\delta_i} $ is $f^\dagger$ with the added noise. This resulted in a total of $46\cdot 20 = 920 $ data points $f^{\delta}$ corresponding to all combinations of images in $\Dconv$ and noise variances $\delta_i$. For each experiment with a noise variance $\delta_i$, the constant $\alpha$ was chosen as $C\sqrt{\delta_i}$, where $C = 1/0.028$ was determined beforehand using a grid-search on a small set of test images and noise variances.

The results, showing the average $L^1$ error over all reconstructed images with fixed $\delta_i$, can be found the double-logarithmic plot of \cref{fig:error_plots}. As reference, the figure provides also plots of $\delta^{1/2}$ and $\delta^{1/4}$. It can be observed that, different from the analytic result of \cref{thm:convergence_intro}, the convergence rate closely matches the rate $\delta^{1/2}$. This confirms that the rate $\delta^{1/4}$ of \cref{thm:convergence_intro} is not optimal and can possibly be improved to a rate $\delta^{1/2}$ based on more adapted source conditions (which is future work), see \cref{rem:rate_not_optimal} for a discussion.

\paragraph*{Experiments on a real test image.} 
At last, we provide an experiment that visualizes the effect of anisotropic TV regularization for varying frequency cutoffs on a real test image, see \cref{fig:cameraman}. Here, the data was generated by applying a truncated Fourier transform to the \emph{Cameraman} test image with frequency cutoffs $\fcut = 3,6,9,12,15,18,21,24,27,30,33$. Reconstruction was carried out by solving \ref{eq:main_min_prob} with $\alpha = 0$, i.e., constraining those Fourier frequencies where data is available to match the data exactly.

While the reconstructions, in particular for low frequency cutoffs, suffer from block artifacts in the form of rectangles with constant gray values, also diagonal and curved edges are visible. This indicates numerically that block-images satisfying assumption \ref{ass:consistentGradient} are not the only class of images where exact reconstruction with anisotropic TV may be possible. Furthermore, in view of results from \cite{Boyer19representer_mh,Carioni20_representer}, it provides hints on possible extremal points of the anisotropic TV regularizer, in particular that they apparently do not (only) consist of rectangles.

\begin{figure}[t]
	\centering
	\begin{subfigure}[b]{0.49\textwidth}
	\centering
	\includegraphics[scale=0.37]{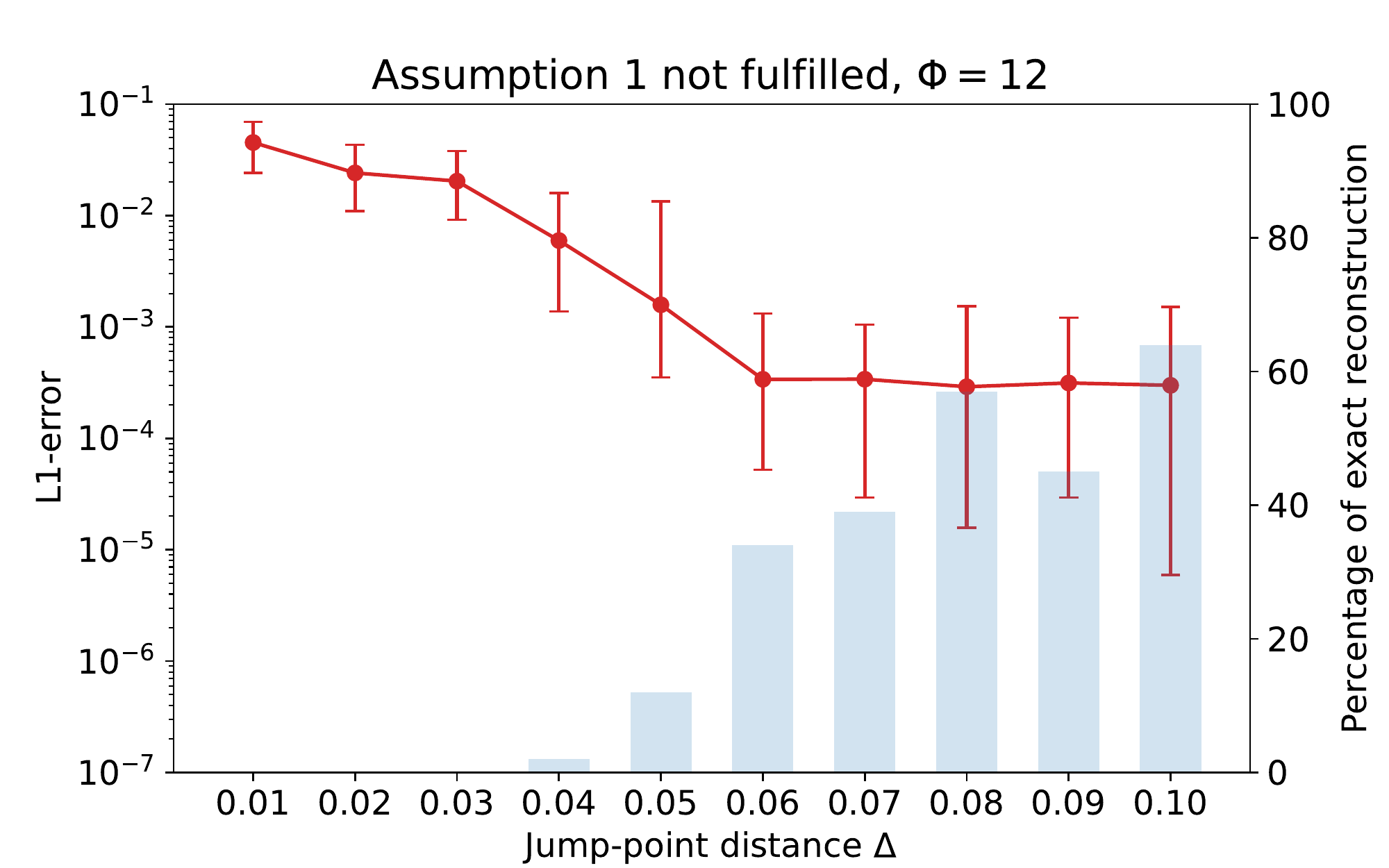}
	\end{subfigure}
	\begin{subfigure}[b]{0.49\textwidth}
	\centering
	\includegraphics[scale=0.37]{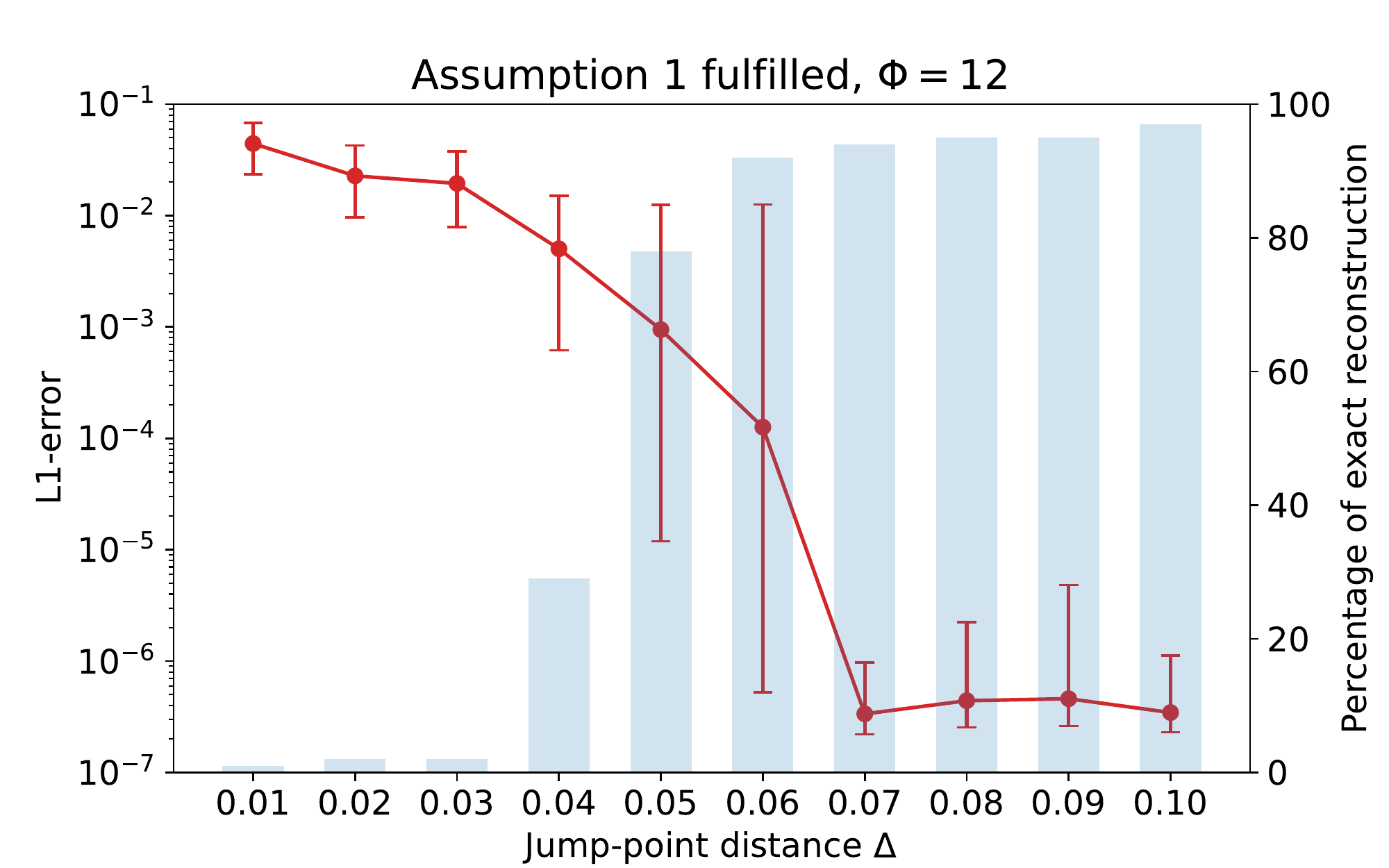}
	\end{subfigure}
	\begin{subfigure}[b]{0.49\textwidth}
	\centering
	\includegraphics[scale=0.37]{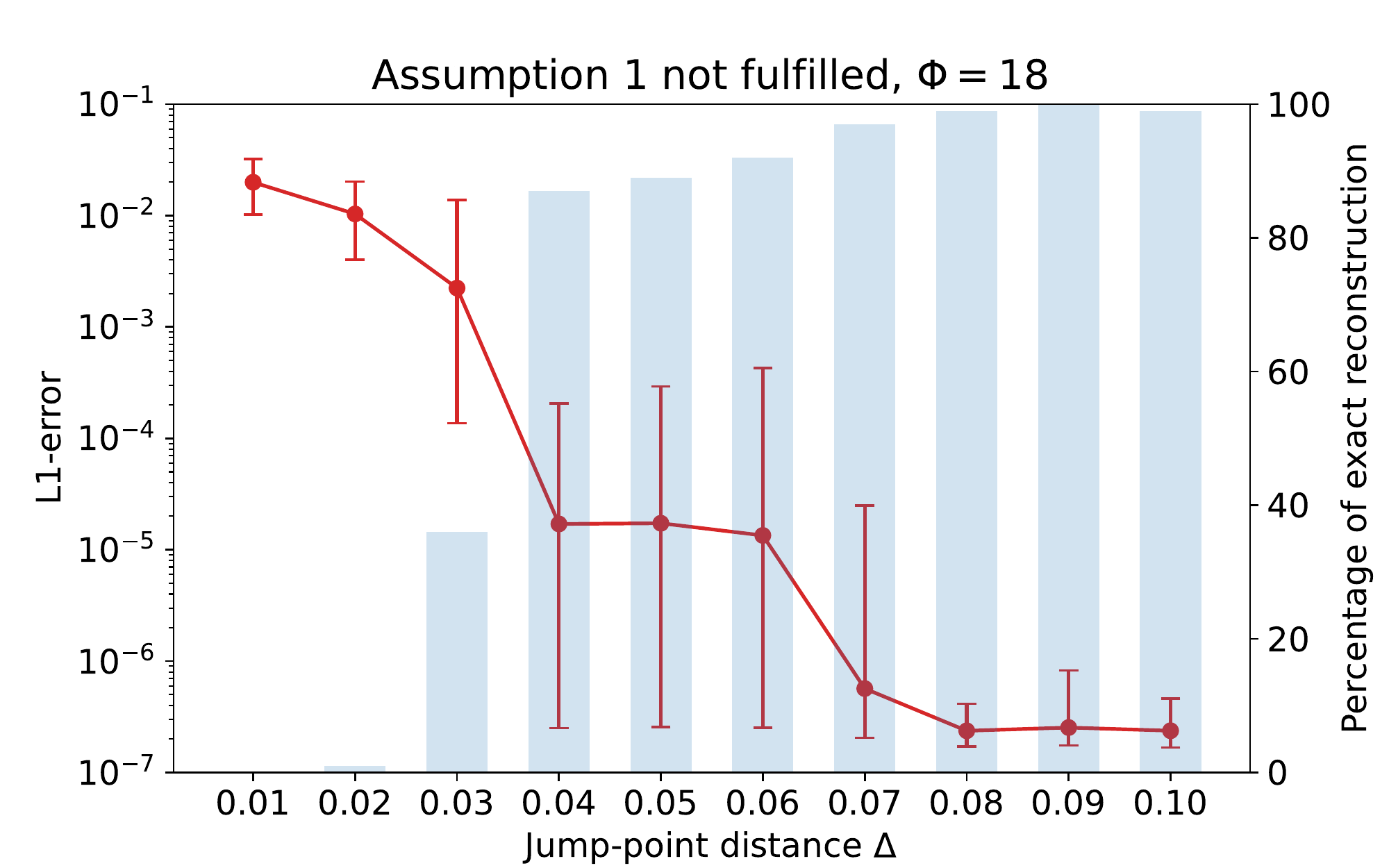}
	\end{subfigure}
	\begin{subfigure}[b]{0.49\textwidth}
	\centering
	\includegraphics[scale=0.37]{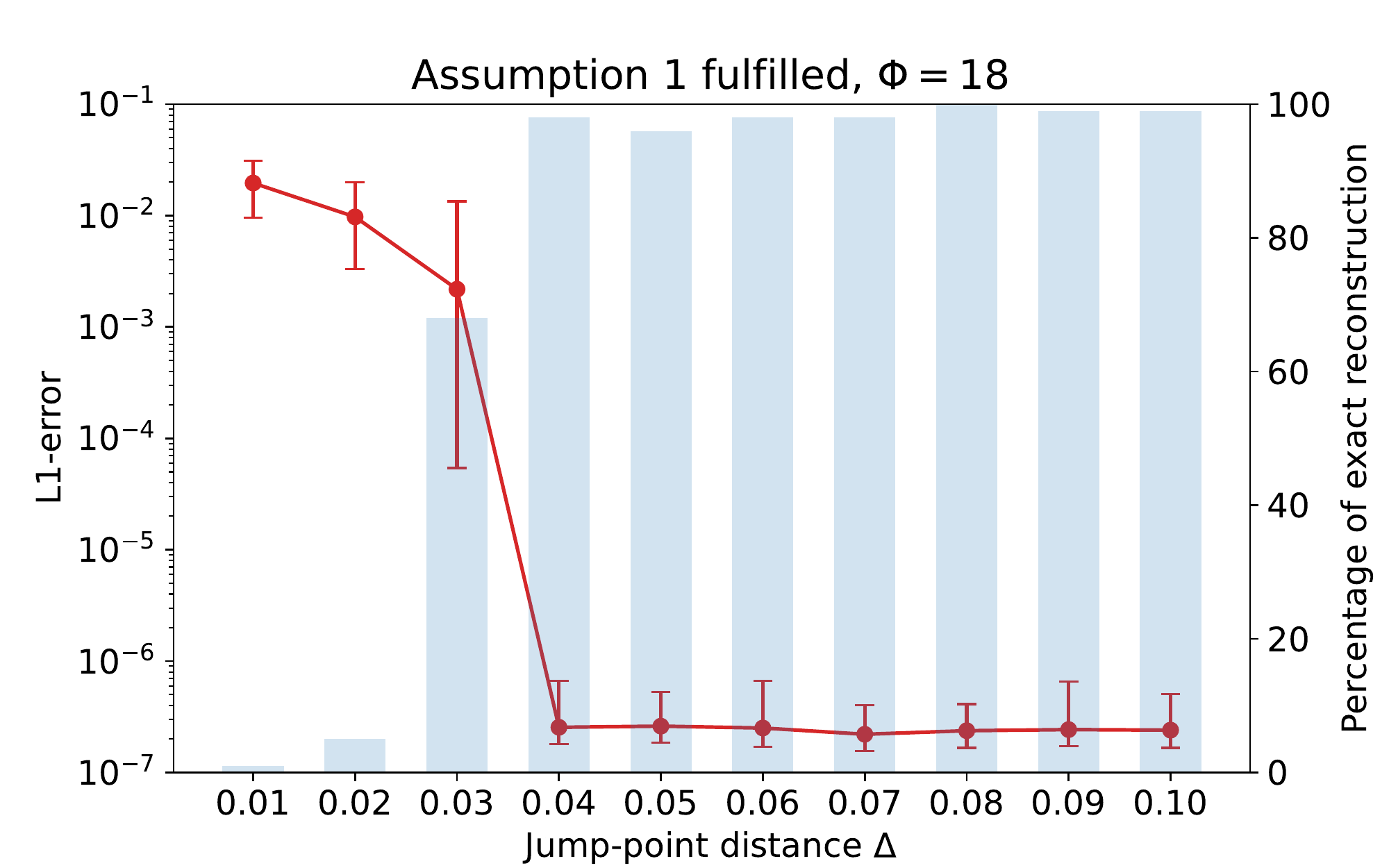}
	\end{subfigure}
\caption{\label{fig:error_plots} $L^1$ error (red line) and percentage of exact reconstruction (blue bars) for two different frequency cutoffs (top $\fcut=12$ and bottom $\fcut=18$) and in dependence on the gradient condition \cref{ass:consistentGradient} being fulfilled (right) or not (left).}
\end{figure}

\begin{figure}
\centering
\includegraphics[scale=0.4]{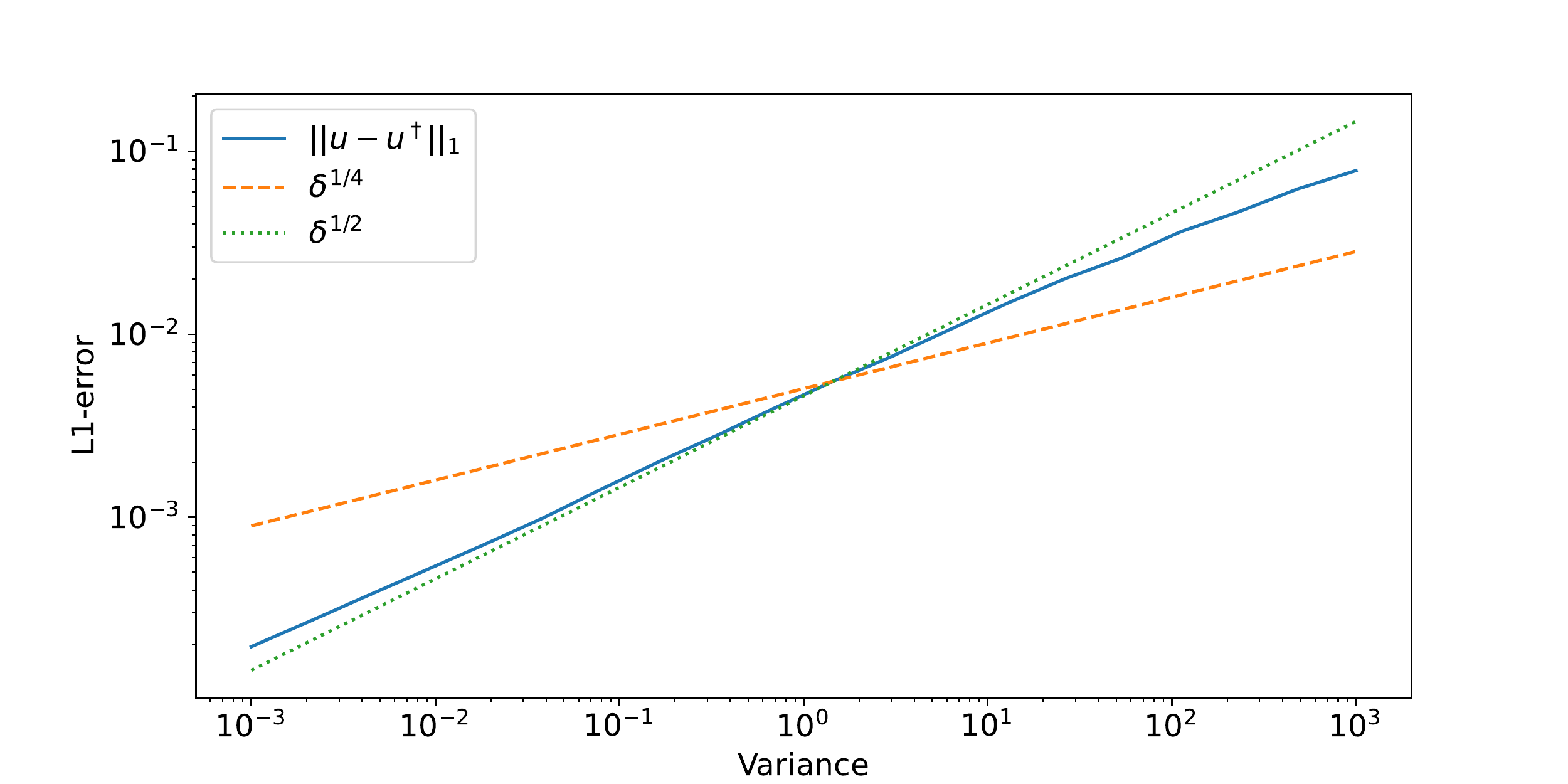}
\caption{\label{fig:rate_plot} Double-logarithmic plot of the $L^1$-error of reconstructions in dependence on the noise variance $\delta$ (blue) together with the plots of $\delta^{1/4}$ (orange, dashed) and $\delta^{1/2}$ (green, dotted) as reference.} 
\end{figure}

\begin{figure}[t]
	\centering
	
	\includegraphics[width=0.16\linewidth]{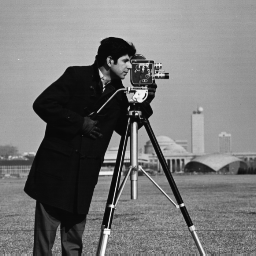}
	\includegraphics[width=0.16\linewidth]{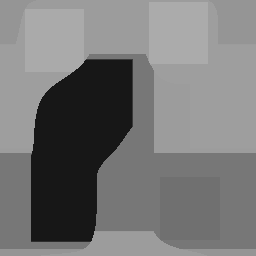}
	\includegraphics[width=0.16\linewidth]{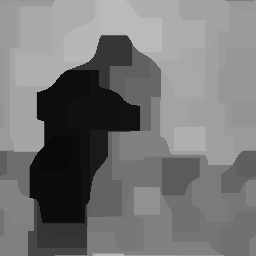}
	\includegraphics[width=0.16\linewidth]{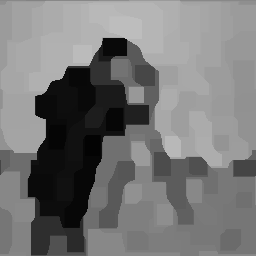}
	\includegraphics[width=0.16\linewidth]{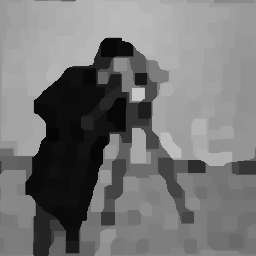}
	\includegraphics[width=0.16\linewidth]{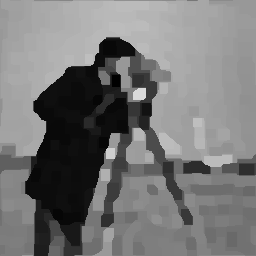}
	
\vspace*{0.1cm}

	\includegraphics[width=0.16\linewidth]{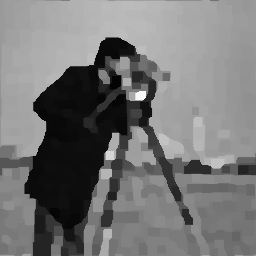}
	\includegraphics[width=0.16\linewidth]{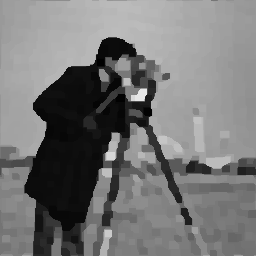}
	\includegraphics[width=0.16\linewidth]{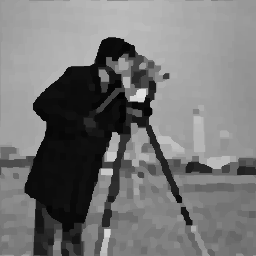}
	\includegraphics[width=0.16\linewidth]{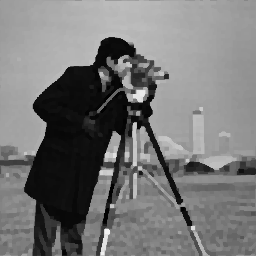}
	\includegraphics[width=0.16\linewidth]{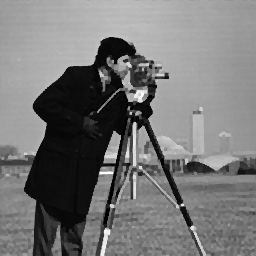}
	\includegraphics[width=0.16\linewidth]{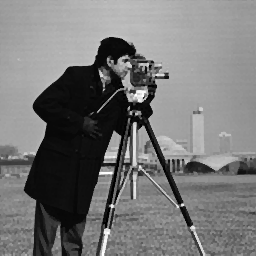}
\caption{ \label{fig:cameraman}Reconstructions of the \emph{Cameraman} image for different values of the cutoff frequency $\fcut$. From left to right, top to bottom: Original, $\fcut = 3,6,9,12,15,18,21,24,27,30,33$.}
\end{figure}

\section*{Acknowledgements}
BW's work was supported by the Deutsche Forschungsgemeinschaft (DFG, German Research Foundation)
under Germany's Excellence Strategy -- EXC 2044 --, Mathematics M\"unster: Dynamics -- Geometry -- Structure,
and under the Collaborative Research Centre 1450--431460824, InSight, University of M\"unster.

\bibliography{lit_dat}
\bibliographystyle{abbrv}

\end{document}